\newtheorem{theorem}{Theorem}
\newtheorem{lemma}{Lemma}
\newtheorem{corollary}{Corollary}
\theoremstyle{definition}
\newtheorem{remark}{Remark}
\DeclareSymbolFont{ugrf@m}{U}{eur}{m}{n}
\DeclareMathSymbol{\updelta}{\mathord}{ugrf@m}{"0E}
\begin{document}
	\sloppy

		\title{Needles and straw in a haystack: robust  confidence for possibly sparse sequences} 
\author{Eduard Belitser and Nurzhan Nurushev\\ {\it VU Amsterdam and University of Amsterdam}}
\date{}
\maketitle		
	
		\begin{abstract}
			In the general \emph{signal+noise} (allowing non-normal, non-independent observations) model  
			we construct an empirical Bayes posterior 
			which we then use for \emph{uncertainty quantification} for the unknown, possibly sparse,
			signal. We introduce a novel \emph{excessive bias restriction} (EBR) condition, which
			gives rise  to a new slicing of the entire space that is suitable for uncertainty quantification.
			Under EBR and some mild  \emph{exchangeable exponential moment condition} on the noise, 
			we establish the local (oracle) optimality of the proposed confidence ball. 
			Without EBR, we  
			derive the full coverage for confidence balls of at least $\sigma n^{1/4}$-radius, implying 
			the local optimality only for cases when the oracle rate is at least of the order $\sigma n^{1/4}$.
			In passing, we also get the local optimal results for estimation and posterior contraction 
			problems. Adaptive minimax results  (also for the estimation and posterior contraction 
			problems) over various sparsity classes follow from our local results. 
		\end{abstract}
\vspace{-20pt}
\footnote{\hspace{-3mm}
	\textit{MSC2010 subject classification:}
	primary 62G15,  secondary 62C12.
	
	\textit{Keywords and phrases:} confidence set, empirical Bayes posterior, local radial rate.
}

	\section{Introduction}
	\paragraph{The model and the main problem.} 
	Suppose we observe  $X=X^{(\sigma,n)}=(X_1,\ldots,X_n)$:
	\begin{eqnarray}
		\label{model}
		X_i=\theta_i+ \sigma \xi_i, \quad i\in [n]=\{1,\ldots,n\}, 
	\end{eqnarray}
	where $\theta=(\theta_1,\ldots, \theta_n) \in \mathbb{R}^n$ 
	is an unknown high-dimensional parameter of interest, the $\xi_i$'s are 
	random errors with $\mathrm{E} \xi_i =0$, $\mbox{Var}(\xi_i)\le C_\xi$,
	$\sigma>0$ is the known noise intensity. 
	The goal is to make inference about the parameter $\theta$ based on 
	the data $X$: recovery of  $\theta$ and \emph{uncertainty quantification} by constructing 
	an \emph{optimal confidence set}. 
	We pursue \emph{robust inference} in the sense that 
	the distribution of the error vector $\xi=(\xi_1,\ldots,\xi_n)$ 
	is unknown and can also depend on $\theta$, 
	but assumed to satisfy only certain mild \emph{exchangeable exponential moment condition}; 
	see Condition \eqref{cond_nonnormal} in Section \ref{section_preliminaries}.
	For inference on $\theta$, we exploit the empirical Bayes approach. 
	We derive non-asymptotic results, which imply asymptotic assertions as well if needed. 
	Possible asymptotic regimes are decreasing noise level $\sigma \to 0$, 
	high-dimensional setup $n \to \infty$ (the leading case 
	for high dimensional models), or their combination, e.g., $\sigma = n^{-1/2}$ and $n\to \infty$.

	Useful inference is not possible without some structure on the parameter $\theta$. 
	Popular structural assumptions are \emph{smoothness} and \emph{sparsity},
	in this paper we are concerned with the latter.
	The best studied problem in the sparsity context is that of estimating $\theta$
	in the many normal means model,  a variety of estimation methods 
	and results are available in the literature:  \cite{Donoho&Johnstone:1994b}, 
	\cite{Birge&Massart:2001}, \cite{Johnstone&Silverman:2004},
	\cite{Abramovich&Grinshtein&Pensky:2007}, \cite{Castillo&vanderVaart:2012}, 
	\cite{vanderPas&Kleijn&vanderVaart:2014}. However, even an optimal 
	estimator does not reveal how far it is from $\theta$. It is of importance 
	to quantify this uncertainty, which can be seen as the problem of constructing 
	confidence sets for $\theta$.
	
	\paragraph{Bayesian approach and accompanying posterior contraction problem.}
	Many inference methods have Bayesian connections. For example, even some seemingly non Bayesian 
	estimators can be obtained as certain quantities (like posterior mode for penalized minimum 
	contrast estimators) of the (empirical Bayes) posterior distributions  resulting from imposing 
	some specific priors on the parameter; cf.\ \cite{Johnstone&Silverman:2004} and 
	\cite{Abramovich&Grinshtein&Pensky:2007}. 
	Although the Bayesian methodology is used or can be related to 
	in constructing many (frequentist) inference procedures, only recently the posterior 
	distributions themselves have been studied 
	in the sparsity context: \cite{Castillo&vanderVaart:2012}, \cite{vanderPas&Kleijn&vanderVaart:2014},
	\cite{Martin&Walker:2014}, \cite{Castilloetal:2015}, \cite{Bhattacharya&etc:2016}, 
	\cite{Rousseau&Szabo:2017}, \cite{Rockova:2018}.  
	
	In this paper, for inference on $\theta$ we use 
	an empirical Bayes approach. Since any Bayesian approach always delivers a posterior
	$\pi(\vartheta|X)$ (in the posteriors for $\theta$, we will use the variable $\vartheta$ 
	to distinguish it from the ``true''  $\theta$), 
	an accompanying problem of interest is  
	the  contraction of the resulting (empirical Bayes) posterior to the ``true'' $\theta$ 
	from the frequentist perspective of the ``true'' measure $\mathrm{P}_\theta$, 
	the distribution of $X$ from \eqref{model}.
	The quality of posterior is characterized by the posterior contraction rate.
	We pursue a novel local approach by allowing the posterior contraction rate to be a local quantity, i.e., depending on the true $\theta$, whereas global minimax rates 
	are typically studied in the literature on Bayesian nonparametrics.

	A common Bayesian way to model sparsity structure is by the so called 
	two-groups priors. Such a prior puts positive mass on vectors $\theta$ with some exact zero 
	coordinates (zero group) and the remaining coordinates (signal group) are 
	drawn from a chosen distribution.
	So the marginal prior for each coordinate is a mixture of 
	a continuous distribution and a point-mass at zero. 
	In \cite{Castillo&vanderVaart:2012} it is shown that for a suitably chosen two-groups prior, the posterior
	concentrates around the true $\theta$ at the minimax rate (as $n\to \infty$) 
	for two sparsity classes, 
	\emph{nearly black vectors} $\ell[p_n]$ with $p_n$ nonzero coordinates 
	and \emph{weak $\ell_s$-balls} $m_s[p_n]$.  
	As pointed out  by \cite{Castillo&vanderVaart:2012} (also by 
	\cite{Johnstone&Silverman:2004}), the distributions of non-zero coordinates 
	should not have too light tails, otherwise
	one gets sub-optimal rates. The important Gaussian case is
	for example excluded. This has to do with the so called 
	\emph{over-shrinkage effect} of the normal prior with a fixed mean for 
	nonzero coordinates, which pushes the posterior too much towards
	the prior mean, 
	missing the true parameter that in general differs from the prior mean. 
	That is why \cite{Johnstone&Silverman:2004} and \cite{Castillo&vanderVaart:2012} 
	discard normal priors on non-zero coordinates and use heavy tailed priors.
	A way to construct such a prior is to put a next level heavy-tailed 
	prior, like  half-Cauchy, on the variance in the normal prior, resulting 
	in the so called (one-component) horseshoe prior on $\theta$ (cf.\  
	\cite{Carvalho&Polson&Scot:2010} and \cite{vanderPas&Kleijn&vanderVaart:2014}). 
	In the present paper we show that normal priors are still usable (cf.\ \cite{Martin&Walker:2014}) 
	and even lead to strong local results (and even for non-normal models) 
	if combined with empirical Bayes approach.

	\paragraph{Uncertainty quantification problem.}
	The main aim in this paper is to construct confidence sets with optimal properties.
	The size of a confidence set is measured by the smallest radius of 
	a ball containing this set, hence it suffices to consider confidence balls. 
	For the usual norm $\| \cdot \|$ in $\mathbb{R}^n$, a random ball in $\mathbb{R}^n$ is 
	$B(\hat{\theta},\hat{r})=\{\theta \in \mathbb{R}^n: \|\hat{\theta}-\theta\| \le \hat{r}\}$,
	where the center $\hat{\theta}=\hat{\theta}(X):\mathbb{R}^n \mapsto \mathbb{R}^n$ and  radius 
	$\hat{r}=\hat{r}(X): \mathbb{R}^n \mapsto \mathbb{R}_+ =[0,+\infty]$ are
	measurable functions of the data $X$. 
	Let us introduce the optimality framework for uncertainty quantification. 
	The goal is to construct such a confidence ball $B(\hat{\theta},C\hat{r})$
	that for any $\alpha_1,\alpha_2\in(0,1]$
	and some functional $r(\theta)=r_{\sigma,n}(\theta)$,
	$r:\mathbb{R}^n\rightarrow \mathbb{R}_+$, there exist $C, c > 0$ such that
	\begin{align}
		\label{defconfball}
		\sup_{\theta\in\Theta_0}\mathrm{P}_\theta\big(\theta\notin 
		B(\hat{\theta},C\hat{r})\big)\le\alpha_1,\quad \sup_{\theta\in\Theta_1}
		\mathrm{P}_\theta\big(\hat{r}\ge c r(\theta)\big)\le\alpha_2,
	\end{align}
	for some $\Theta_0,\Theta_1\subseteq\mathbb{R}^n$. The function $r(\theta)$, 
	called the \emph{radial rate}, is a benchmark for the effective radius of 
	the confidence ball $B(\hat{\theta},C\hat{r})$.
	The first expression in \eqref{defconfball} is called \emph{coverage relation} 
	and the second \emph{size relation}. 
	Notice that our approach is local (and hence genuinely adaptive) as 
	the radial rate $r(\theta)$ is a function of the ``true'' parameter $\theta$.
	Recall the common (global) minimax adaptive version of \eqref{defconfball}:
	given a family of sets $\Theta_\beta$ with corresponding minimax estimation rates 
	$r(\Theta_\beta)$ indexed by a structural parameter $\beta\in\mathcal{B}$ 
	(e.g., smoothness or sparsity), the minimax adaptive version of  \eqref{defconfball} would 
	be obtained by taking $\Theta_0=\Theta_1=\Theta_\beta$ 
	and the radial rate $r(\theta)=r(\Theta_\beta)$ for all $\theta\in\Theta_\beta$ and all
	$\beta \in\mathcal{B}$.
	
	Coming back to our local framework \eqref{defconfball}, it is desirable to find the 
	smallest  $r(\theta)$ and the biggest $\Theta_0,\Theta_1$, 
	for which \eqref{defconfball} holds. These are contrary requirements, 
	so we have to trade them off against each other.
	There are different ways of 
	doing this, leading to different optimality frameworks. For example, if we insist on 
	overall uniformity $\Theta_0=\mathbb{R}^n$, then the results in \cite{Li:1989} and 
	\cite{Cai&Low:2004} (more refined versions are in \cite{Baraud:2004} 
	and \cite{Nickl&vandeGeer:2013})
	say basically that the radial rate $r$ cannot be of a faster order than $\sigma n^{1/4}$ for every 
	$\theta$ and is at least of order $\sigma n^{1/2}$ for some $\theta$.  
	This means that  any confidence ball that is optimal with respect to the optimality 
	framework \eqref{defconfball}  with $\Theta_0=\mathbb{R}^n$  
	will necessarily have a big size, even if the true $\theta$ happens to lie in a very ``good'', smooth or sparse, 
	class $\Theta_1$. Many good confidence sets cannot be optimal in this sense 
	(called ``honest'' in some papers)  and effectively excluded from the consideration. 
	For minimax adaptive versions of \eqref{defconfball} this means that as soon 
	as we require $\Theta_0=\Theta_\beta$, 
	$\beta\in\mathcal{B}$ in the coverage relation, the minimax rate 
	$r(\Theta_\beta)$ in the size relation is unattainable even for $\beta \in \mathcal{B}=
	\{ \beta_1,\beta_2\}$; 
	cf.\ \cite{Nickl&vandeGeer:2013} for two nearly black classes.
	Essentially, the overall uniform coverage and optimal size properties can not hold 
	together,  it is necessary to sacrifice at least one of these, preferably as little as possible.
	We argue that it is unreasonable to pursue an optimality framework with 
	the entire space $\Theta_0=\mathbb{R}^n$ in the coverage relation, 
	because this leads to discarding many good procedures and optimality of uninteresting ones. 
	Instead, it makes sense to sacrifice in the set $\Theta_0=\mathbb{R}^n \backslash \Theta'$, 
	by removing a preferably small portion of  ``deceptive parameters''  $\Theta'$ from 
	$\mathbb{R}^n$ so that that  the optimal radial rates become attainable in the size relation with 
	interesting (preferably ``massive'') sets $\Theta_1$. 
	
	This ``deceptiveness'' phenomenon is well understood for some smoothness structures (e.g., Sobolev scale), 
	especially in global minimax settings; see \cite{Robins&vanderVaart:2006}, \cite{Bull&Nickl:2013}, \cite{Belitser:2017} and \cite{Szabo&vanderVaart&vanZanten:2015}.
	If we now insist on the optimal size property in \eqref{defconfball} for all $\Theta_\beta$, $\beta\in\mathcal{B}$,
	the coverage relation in  \eqref{defconfball} will not hold for all $\Theta_0=\Theta_\beta$, but  only
	for $\Theta_0 = \Theta_\beta \backslash \Theta'$, with some set of 
	``deceptive parameters''  $\Theta'$ removed from $\Theta_\beta$.
	In \cite{Szabo&vanderVaart&vanZanten:2015} such parameters are called
	``inconvenient truths'' and an implicit construction of a $\theta'\in\Theta'$ is given. 
	Examples of non-deceptive parameters are the set of \emph{self-similar} parameters 
	$\Theta_0=\Theta_{ss}$  introduced by  \cite{Picard&Tribouley:2000} and studied by 
	\cite{Bull:2012}, \cite{Bull&Nickl:2013}, 
	\cite{Szabo&vanderVaart&vanZanten:2015}, and the set of \emph{polished tail parameters} 
	$\Theta_0=\Theta_{pt}$ considered by \cite{Szabo&vanderVaart&vanZanten:2015}. 
	In all the above mentioned papers global minimax radial rates 
	(i.e., $r(\theta) = r(\Theta_\beta)$ for all $\theta \in \Theta_\beta$)
	for specific smoothness structures are studied.
	A local approach, delivering also the adaptive minimax results for many smoothness 
	structures simultaneously, is considered by \cite{Babenko&Belitser:2010} 
	for posterior contraction rates and by \cite{Belitser:2017} for constructing optimal 
	confidence balls. In \cite{Belitser:2017}, yet a more general (than $\Theta_{ss}$ and $\Theta_{pt}$)
	set of non-deceptive parameters was introduced,
	$\Theta_0=\Theta_{ebr}$, parameters satisfying the so called 
	\emph{excessive bias restriction} (EBR).  More on this can be found in Section \ref{subsec_ebr}.
	
	To the best of our knowledge, there are very few papers about adaptive results on 
	uncertainty quantification \eqref{defconfball}. The case of two nearly black classes 
	is treated by \cite{Nickl&vandeGeer:2013}, the ``general polished tail'' condition was 
	introduced in \cite{Rousseau&Szabo:2016} to describe non-deceptive parameters, 
	a restricted scale of nearly black classes is treated in \cite{vanderPas&Szabo&vanderVaart:2017},
	where a version of our EBR condition is used, more on relation to paper  \cite{vanderPas&Szabo&vanderVaart:2017} 
	can be found in Section \ref{subsec_duscussion}.
	
	\paragraph{The scope of this paper.}
	In this paper, we introduce a family of normal mixture priors and propose an empirical Bayes procedure
	(in fact, two procedures). We use the normal likelihood, whereas the true model \eqref{model} 
	does not have to be normal (and independence of $\xi_i$'s is not required either), but only satisfying 
	some mild Condition (A1) (called {\it exchangeable exponential moment condition}). 
	There are three distinctive features of our approach: \emph{robust}, \emph{local} and \emph{refined}.
	
	First, \emph{robust} means that our results cover also misspecified models, as
	we allow the $\xi_i$'s to be not necessarily independent normals (a certain type of 
	error misspecification was also mentioned in a remark of the supplement to 
	\cite{Castilloetal:2015}), 
	but only satisfying  Condition (A1).
	It turned out that, although we use the normal likelihood in the Bayesian analysis, 
	in the proof of the main results we can handle the frequentist behavior 
	of the posterior from the perspective of the true measure only on the basis of Condition (A1).
	
	Second, we develop the novel \emph{local} approach, meaning that the radial rate $r(\theta)$ in \eqref{defconfball} 
	is allowed to be a function of $\theta$, which, in a way, measures the 
	amount of sparsity for each $\theta\in\mathbb{R}^n$: the smaller $r(\theta)$, 
	the more sparse $\theta$. 
	The local radial rate $r(\theta)$ is constructed as the best (smallest) rate over 
	a certain family of local rates, therefore called \emph{oracle rate}. 
	We demonstrate that the local approach is more powerful than global in that
	we do not need to impose any specific sparsity structure,  
	because the proposed  local approach automatically exploits the ``effective'' 
	sparsity of each underlying $\theta$, and our local results imply a whole panorama of
	the global minimax results for many scales at once. More on this is in Section \ref{implication}.
	
	Third, we derive the local posterior contraction result for the resulting empirical Bayes posterior $\hat{\pi}(\vartheta |X)$
	in the \emph{refined non-asymptotic formulation}: 
	$\sup_{\theta\in\mathbb{R}^n}\mathrm{E}_{\theta}\hat{\pi}(\|\vartheta -\theta\|^2 
	\ge M_0 r^2(\theta) + M \sigma^2|X) \le H_0e^{-m_0 M}$ 
	for some fixed $M_0, H_0, m_0>0$ and arbitrary $M \ge 0$,
	as an exponential non-asymptotic concentration bound in terms of $M$, uniformly 
	in $\theta\in \mathbb{R}^n$. This formulation provides  a rather subtile 
	characterization of the quality of the posterior    
	(finer, than, e.g., asymptotically in terms of the dimension $n$), allowing 
	subtle analysis for various asymptotic regimes. 
	This result is of interest and importance on its own as it actually establishes the contraction 
	of the empirical Bayes posterior with the local rate $r(\theta)$.
	Besides, we  obtain the oracle estimation result (also in similar refined formulation, 
	finer than traditional oracle inequalities) by constructing 
	an estimator, the empirical Bayes posterior mean, which converges to $\theta$ with the local 
	rate $r(\theta)$. 
	This result, besides being an ingredient for the uncertainty quantification problem \eqref{defconfball}, 
	is also of interest and importance on its own as it delivers the same (oracle and minimax) estimation  
	results as in \cite{Abramovich&Grinshtein&Pensky:2007} and \cite{Johnstone&Silverman:2004} 
	and posterior convergence results as in \cite{Castillo&vanderVaart:2012}, 
	obtained for different priors.
	

	Next, we construct a confidence ball 
	by using the empirical Bayes posterior quantities. 
	Since we want the size of our confidence sets to be of an oracle rate order, 
	this comes with the price that the coverage property can hold
	uniformly only over some set of parameters satisfying the so called \emph{excessive bias restriction} 
	(EBR) 
	$\Theta_0=\Theta_{\rm eb}\subseteq\mathbb{R}^n$. 
	The main result consists in establishing the optimality \eqref{defconfball} 
	of the constructed confidence ball for the optimality framework 
	$\Theta_0=\Theta_{\rm eb}$, $\Theta_1=\mathbb{R}^n$ and the local radial rate 
	$r(\theta)$. 
	The important consequence of our local approach is that a whole panorama 
	of adaptive (global) minimax results (for all the three problems: estimation, 
	posterior contraction rate and confidence sets)  over \emph{all} sparsity scales 
	\emph{covered} by $r(\theta)$ (see Section \ref{implication}) follow from our local results. 
	In particular, our local results imply the same type of  
	adaptive minimax  estimation results over sparsity scales as in \cite{Johnstone&Silverman:2004}, 
	and the same type of global minimax results on contraction posterior rates 
	as in \cite{Castillo&vanderVaart:2012} (and actually more).
	
	Also we treat the situation when $\Theta_0= \mathbb{R}^n$ in \eqref{defconfball}
	by constructing a confidence ball such that its radius is 
	of the order $\sigma n^{1/4} + r(\theta)$. As we already discussed, the term 
	$\sigma n^{1/4}$ in the size relation is necessary for the uniform coverage to hold. 
	Clearly, this confidence ball will have optimal size only 
	for non-sparse parameters (for which $r(\theta) \ge c\sigma n^{1/4}$).

	Although the original motivation of the EBR condition was to remove the deceptive parameters, 
	it turned out to be a very useful notion in the context of uncertainty quantification. 
	In effect, the EBR condition leads to a \emph{new sparsity EBR-scale} 
	which gives the slicing of the entire space that is very suitable for uncertainty quantification.
	This provides a new perspective at the above mentioned ``deceptiveness'' issue: 
	basically, each parameter is  deceptive (or non deceptive) to some extent.
	It is the structural parameter of the new EBR-scale that 
	measures the deceptiveness amount, and the (mild and controllable) price for handling 
	deceptive parameters is the effective amount of inflating of the confidence ball 
	that matches the amount of deceptiveness needed to provide a high coverage. 
	More on the EBR condition can be found in Section \ref{subsec_ebr}.

	The paper is organized as follows. In Section \ref{section_prelim} we introduce the notation, 
	the prior, describe the empirical Bayes procedure in detail, make a link with the penalization method, 
	and provide some conditions. 
	Section $\ref{main_results}$, where we also introduce the EBR, contains the main 
	results of the paper. In Section $\ref{sec_discussion_and_EBR}$, we discuss some 
	variations of our results, present concluding remarks and discuss the EBR. 
	The theoretical results are illustrated in Section $\ref{simulations}$ 
	by a small simulation study. The proofs of the lemmas and theorems are 
	given in Sections $\ref{proofs_lemmas}$ and $\ref{proofs_theorems}$ respectively.

	\section{Preliminaries}
	\label{section_preliminaries}
	First we introduce some notation and a certain family of normal priors (similar to priors from 
	\cite{Belitser:2017} but geard towards modeling sparsity rather than smoothness). Next,  by applying 
	the empirical Bayes approach to the the normal likelihood, we derive an empirical 
	Bayes posterior which we will use in the construction of the estimator and the confidence ball. 
	The empirical Bayes procedure is linked to the penalization method.
	We complete this section with some conditions on the errors $\xi_i$'s and the prior.  
	
	\label{section_prelim}
	\subsection{Notation}
	Denote the  probability measure of $X$ from the model (\ref{model}) by 
	$\mathrm{P}_\theta=\mathrm{P}^{(\sigma,n)}_\theta$, and by $\mathrm{E}_\theta$ 
	the corresponding expectation. 
	For notational simplicity, we often skip the 
	dependence on $\sigma$ and $n$. 
	Denote by $\mathbbm{1}_E=\mathbbm{1}\{E\}$ the indicator function of the event $E$, 
	by $|S|$ the cardinality of the set $S$, the difference of sets 
	$S\backslash S_0 =\{s \in S:\,  s\not\in S_0\}$.
	Let $[k]=\{1,\ldots,k\}$ and $[k]_0=\{0\} \cup [k]$ 
	for $k\in\mathbb{N}=\{1,2,\ldots\}$.  
	For $I\subseteq [n]$,  define $I^c=[n] \backslash I$. 
	Let $\mathcal{I}=\mathcal{I}_n=\{I: \, I \subseteq [n]\}$
	be the family of all subsets of $[n]$ including the empty set.
	If the summation range in $\sum_I$ is not specified (for brevity), this means $\sum_{I\in \mathcal{I}}$.  
	Throughout we assume the conventions that $\sum_{i\in \varnothing}a_i=0$,  
	$\sum_a^b a_i=\sum_{a\le i \le b} a_i$
	for any $a_i,a,b\in\mathbb{R}$ and
	$0\log(c/0)=0$ (hence $(c/0)^0=1$) for any $c>0$. 
	Let $\theta_{(1)}^2\le\theta_{(2)}^2\le\ldots \le \theta_{(n)}^2$ and 
	$\theta_{[1]}^2\ge\theta_{[2]}^2\ge\ldots \ge \theta_{[n]}^2$ 
	be the ordered values of $\theta_{1}^2,\ldots, \theta_{n}^2$.
	To have some quantity well defined in the sequel, introduce also 
	$0=\theta^2_{(0)}=\theta_{[n+1]}^2$ and $\theta_{[0]}^2=\theta^2_{(n+1)}=\infty$.

	If random quantities  appear in a relation, this relation should be understood 
	in $\mathrm{P}_{\theta}$-almost sure sense. 
	Throughout $\phi(x,\mu,\sigma^2)$ will be the density 
	of $\mu+\sigma Z\sim \mathrm{N}(\mu,\sigma^2)$ at point $x$, where 
	$Z\sim \mathrm{N}(0,1)$. 
	By convention, $\mathrm{N}(\mu,0)=\delta_\mu$ denotes a Dirac measure at point $\mu$.
	The symbol $\triangleq$ will refer to equality by definition, $(a\vee b) = \max\{a,b\}$ and
	$(a\wedge b) = \min\{a,b\}$. Finally,  denote $X(I)=(X_i\mathbbm{1}\{i\in I\}, i\in\mathbb{N}_n)$ 
	for $I \in \mathcal{I}$, and let $\langle x, y \rangle = \sum_i x_i y_i $ denote the usual scalar 
	product between $x,y \in \mathbb{R}^n$.

	\subsection{Multivariate normal prior}
	When deriving all the posterior quantities in the Bayesian analysis below, 
	we will use the normal likelihood $\ell(\theta, X) =(2\pi \sigma^2)^{-n/2}
	\exp\{-\|X-\theta\|^2/2\sigma^2\}$, which is equivalent to imposing  
	the classical high-dimensional normal model  $X=(X_i, i\in\mathbb{N}_n)\sim
	\bigotimes_{i=1}^n \mathrm{N}(\theta_i, \sigma^2)$. Recall however that 
	the ``true'' model $X \sim \mathrm{P}_{\theta}$ is not assumed to be normal, 
	but satisfying Condition \eqref{cond_nonnormal}.

	To model possible sparsity in the parameter $\theta$, the coordinates of $\theta$ 
	can be split into two distinct groups of coordinates of $\theta$: for some 
	$I \in\mathcal{I}$,  $\theta_I = (\theta_i, i \in I)$ and $\theta_{I^c}=(\theta_i, i \in I^c)$, 
	so that $\theta=(\theta_I, \theta_{I^c})$. The group of coordinates 
	$\theta_{I^c}=(\theta_i, i \not\in I)$  consists of (almost) zeros 
	and  $\theta_{I}=(\theta_i, i \in I)$ 
	is the group of non-zeros coordinates. For any $\theta\in\mathbb{R}^n$ 
	(even ``not sparse'' one) there is the best (oracle) splitting into two groups, 
	we will come back to this in Section \ref{main_results}.
	To model sparsity, we propose a prior on $\theta$ given $I$ as follows:
	\begin{align}
		\label{gen_norm_prior}
		\pi_I&=\bigotimes\nolimits_{i=1}^n \mathrm{N}\big(\mu_i(I), \tau_i^2(I)\big), \quad
		\mu_i(I)=\mu_i\mathbbm{1}\{i\in I\},  \quad
		\tau^2_i(I)=\sigma^2K_n(I)\mathbbm{1}\{i \in I\},
	\end{align}
	and $K_n(I)=(\tfrac{en}{|I|}-1)\mathbbm{1} \{I\not = \varnothing\}$.
	The indicators in prior \eqref{gen_norm_prior} ensure the sparsity  of the group $I^c$.
	The rather specific choice of $K_n(I)$ is made for the sake of concise expressions 
	in later calculations, many other choices are actually possible.
	By using normal likelihood $\ell(\theta, X) =(2\pi \sigma^2)^{-n/2}\exp\{-\|X-\theta\|^2/2\sigma^2\}$, 
	the corresponding posterior distribution for $\theta$ is readily obtained:
	\begin{align}
		\label{norm_poster_I}
		\pi_I(\vartheta|X) = \bigotimes_{i=1}^n 
		\mathrm{N}\Big(\frac{\tau_i^2(I) X_i+\sigma^2\mu_i(I)}{\tau_i^2(I)+\sigma^2}, 
		\frac{\tau_i^2(I)\sigma^2}{\tau_i^2(I)+\sigma^2}\Big).
	\end{align}

	Next, introduce the prior $\lambda$ on $\mathcal{I}$,
	discussed in Section \ref{subsec_duscussion} below.
	For $\varkappa>1$, draw a random set  from $\mathcal{I}$ with probabilities 
	\begin{align}
		\label{prior_lambda}
		\lambda_I=c_{\varkappa,n} \exp\big\{-\varkappa|I| \log (\tfrac{en}{|I|})\big\} 
		= c_{\varkappa,n}(\tfrac{en}{|I|})^{-\varkappa |I|}, \quad I \in\mathcal{I},
	\end{align} 
	where $c_{\varkappa,n}$ is the normalizing constant. 
	Since $(\frac{n}{k})^k \le \binom{n}{k} \le (\frac{en}{k})^k$ and $\binom{n}{0}=1$,  
	\begin{align}
		\label{sum_lambda}
		1&=\sum_{I \in\mathcal{I}}\lambda_I=c_{\varkappa,n}\sum_{k=0}^n \binom{n}{k} 
		\Big(\frac{en}{k}\Big)^{-\varkappa k}\le
		c_{\varkappa,n}\sum_{k=0}^n \Big(\frac{en}{k}\Big)^{-(\varkappa-1)k}
		\le c_{\varkappa,n}\sum_{k=0}^n e^{-(\varkappa-1)k},
	\end{align} 
	so that $c_{\varkappa,n}\ge 1-e^{1-\varkappa}>0$, $n \in \mathbb{N}$. 
	Combining \eqref{gen_norm_prior} and \eqref{prior_lambda} gives 
	the mixture prior on $\theta$: $\pi = \sum_{I \in\mathcal{I}} \lambda_I \pi_I$.
	This leads to the marginal distribution of $X$: 
	$\mathrm{P}_X=\sum_{I \in\mathcal{I}} 
	\lambda_I \mathrm{P}_{X,I}$, with  
	$\mathrm{P}_{X,I}= \bigotimes_{i=1}^n
	\mathrm{N}\big(\mu_i(I), \sigma^2+\tau_i^2(I)\big)$,
	and the posterior of $\theta$ is
	\begin{align}
		\label{norm_gen_posterior}
		\pi(\vartheta|X) =\pi_\varkappa(\vartheta|X) =\sum_{I \in\mathcal{I}} \pi_I(\vartheta|X)\pi(I|X),
	\end{align}
	where   
	$\pi_I(\vartheta|X)$ is defined by (\ref{norm_poster_I}) 
	and the posterior $\pi(I|X)$  for $I$ is
	\begin{align}
		\label{norm_P(I|X)}
		\pi(I|X)=
		\frac{\lambda_I \mathrm{P}_{X,I}}{\mathrm{P}_X}=
		\frac{\lambda_I \prod_{i=1}^n 
			\phi\big(X_i,\mu_i(I), \sigma^2+\tau_i^2(I)\big)}
		{\sum_{J\in\mathcal{I}}\lambda_J\prod_{i=1}^n 
			\phi\big(X_i,\mu_i(J), \sigma^2+\tau_i^2(J)\big)}.
	\end{align}
	
	\subsection{Empirical Bayes posterior} 
	The parameters $\mu_{1,i}$ are yet to be chosen in the prior. We choose $\mu_i$
	by using empirical Bayes approach. The marginal likelihood $\mathrm{P}_X$ is 
	readily maximized with respect to $\mu_i$: $\tilde{\mu}_i=X_i$, which we then 
	substitute instead of $\mu_i$ in the expression (\ref{norm_gen_posterior}) 
	for $\pi(\vartheta|X)$, obtaining the empirical Bayes posterior
	\begin{align}
		\label{emp_norm_posterior}
		\tilde{\pi}(\vartheta|X) = \tilde{\pi}_\varkappa(\vartheta|X)=
		\sum_{I \in\mathcal{I}} \tilde{\pi}_I(\vartheta|X)\tilde{\pi}(I|X),
	\end{align}
	where the empirical Bayes conditional posterior 
	(recall that $\mathrm{N}(0,0) =\delta_0$)
	\begin{align}
		\tilde{\pi}_I(\vartheta|X)&=
		\label{emp_poster_I}
		\bigotimes_{i=1}^n \mathrm{N}\big(X_i \mathbbm{1}\{ i\in I\}, 
		\tfrac{K_n(I)\sigma^2\mathbbm{1}\{ i\in I\}}{K_n(I)+1} \big)
	\end{align}
	is obtained from \eqref{norm_poster_I} 
	with $\mu_{1,i}=X_i$, and 
	\begin{align}
		\label{emp_P(I|X)}
		\tilde{\pi}(I|X)=
		\frac{\lambda_I \mathrm{P}_{X,I}}
		{\sum_{J\in\mathcal{I}} \lambda_J\mathrm{P}_{X,J}}
		=\frac{\lambda_I\prod_{i=1}^n
			\phi (X_i,X_i\mathbbm{1}\{i\in I\},\sigma^2+\tau^2_i(I))}
		{\sum_{J\in\mathcal{I}} \lambda_J\prod_{i=1}^n 
			\phi(X_i, X_i\mathbbm{1}\{i\in J\},\sigma^2+\tau^2_i(J))}
	\end{align} 
	is the empirical Bayes posterior for $I \in\mathcal{I}$,
	obtained from \eqref{norm_P(I|X)}
	with $\mu_i=X_i$.
	Let $\tilde{\mathrm{E}}$ and $\tilde{\mathrm{E}}_I$ be the expectations 
	with respect to the measures $\tilde{\pi}(\vartheta|X)$ and $\tilde{\pi}_I(\vartheta|X)$ 
	respectively. Then $\tilde{\mathrm{E}}_I(\vartheta|X)=X(I)=(X_i\mathbbm{1}\{i\in I\}, i\in[n])$.
	Introduce the \emph{empirical Bayes posterior mean} estimator
	\begin{align}
		\label{estimator}
		\tilde{\theta}&=\tilde{\mathrm{E}} (\vartheta|X) =
		\sum_{I\in\mathcal{I}}\tilde{\mathrm{E}}_I(\vartheta|X) \tilde{\pi}(I|X)
		=\sum_{I\in\mathcal{I}}X(I)\tilde{\pi}(I|X).
	\end{align}

	Consider an alternative empirical Bayes posterior. First derive an empirical 
	Bayes variable selector  $\hat{I}$ by maximizing $\tilde{\pi}(I|X)$ over $I \in\mathcal{I}$
	(any maximizer will do) as follows:
	\begin{align}
		\hat{I}\!&=\!\arg\!\max_{I\in\mathcal{I}}{\tilde{\pi}}(I|X)
		=\arg\!\max_{I\in\mathcal{I}}\lambda_I \mathrm{P}_{X,I}
		=\arg\!\max_{I\in\mathcal{I}} \Big\{\!-\!\sum_{i \in I^c } \tfrac{X_i^2}{2\sigma^2}
		-\tfrac{|I|}{2}\log (K_n(I)+1)+\log \lambda_I\Big\} \notag\\
		\label{I_MAP}
		&=
		\arg\!\min_{I\in\mathcal{I}}\Big\{\sum_{i \in I^c}X_i^2 +
		(2\varkappa +1) \sigma^2 |I| \log\big(\tfrac{en}{|I|}\big)\Big\},
	\end{align}
	which is reminiscent of the penalization procedure from \cite{Birge&Massart:2001} 
	(cf.\ also \cite{Abramovich&Grinshtein&Pensky:2007}).
	Now plugging in $\hat{I}$ into $\tilde{\pi}_I(\vartheta|X)$ defined by \eqref{emp_poster_I} 
	yields another empirical (now 
	with respect to $\mu_i$'s and $I$)  Bayes posterior
	and the corresponding empirical Bayes mean estimator for $\theta$:
	\begin{align}
		\label{emp_emp_posterior}
		\check{\pi} (\vartheta|X)=
		\tilde{\pi}_{\hat{I}} (\vartheta|X), \quad  
		\check{\theta}= \check{\mathrm{E}}(\vartheta|X)= X(\hat{I})=(X_i\mathbbm{1}\{i\in \hat{I}\}, i\in[n]),
	\end{align}
	where $\check{\mathrm{E}}$ denotes the expectation with respect to the 
	measure $\check{\pi}(\vartheta|X)$. 
	
	\subsection{Conditions} 
	We provide some technical conditions and definitions. 
	The following condition (called \emph{exchangeable exponential moment condition})
	on the error vector $\xi=(\xi_1,\ldots,\xi_n)$ will be assumed throughout.  \smallskip\\
	{\sc Condition (A1).}
	The random variables $\xi_i$'s from \eqref{model} satisfy: 
	$\mathrm{E} \xi_i =0$, $\mathrm{Var}(\xi_i)\le C_\xi$, $ i\in[n]$; and 
	for some $\beta, B>0$ (without loss of generality assume $C_\xi =1$ and 
	$\beta\in (0,1]$),
	\begin{align}
		\label{cond_nonnormal}
		\tag{A1}
		\mathrm{E} \exp\Big\{\textstyle{\beta\sum_{i\in I}} \xi_i^2\Big\} \le e^{B |I|} \quad
		\text{for all} \;\; I\in  \mathcal{I}.
	\end{align}  
	Notice that the unknown distribution of $\xi$ may also depend on $\theta$, in that case 
	we assume Condition (A1) to be fulfilled for all $\theta\in\mathbb{R}^n$.
	The constants $\beta \in (0,1]$  and $B>0$ will be fixed in the sequel and we 
	omit  the dependence on these constants in all further notation. 
	A short discussion about this condition can be found in Section \ref{subsec_duscussion}.
	There is no need to assume 
	$\mathrm{Var}(\xi_i)\le C_\xi$ as this follows from \eqref{cond_nonnormal}, 
	but we provide this just for reader's convenience.
	Note also that the $\xi_i$'s do not have to be even independent. For example, $\xi_i$'s 
	can follow an autoregressive model, see Section \ref{subsec_duscussion}.
	
	Condition \eqref{cond_nonnormal} is clearly satisfied for independent normals 
	$\xi_i \overset{\rm ind}{\sim} \mathrm{N}(0,1)$ and for bounded (arbitrarily dependent) $\xi_i$'s.
	In case of independent normal errors, some bounds in the proofs can be sharpened;
	we will mention possible refinements  in Section \ref{subsec_duscussion}.

	In the proof of Theorem \ref{th1} below, we will need a bound for 
	$\mathrm{E}\big(\sum_{i\in I} \xi_i^2 \big)^2$, $I\in\mathcal{I}$.  
	Actually, Condition \eqref{cond_nonnormal}  ensures such a bound. 
	Indeed, since $x^2 \le e^{2x}$ for all $x\ge 0$, by using the H\"older inequality and  \eqref{cond_nonnormal}, we derive that for any $t\in (0,\beta]$ 
	\[
	\mathrm{E}_{\theta}\big(\sum_{i\in I} \xi_i^2 \big)^2
	=\frac{4}{t^2}  \mathrm{E}_{\theta}\big(\tfrac{t}{2}\sum_{i\in I} \xi_i^2 \big)^2 \le 
	\frac{4}{t^2}  \mathrm{E}_{\theta}e^{t\sum_{i\in I} \xi_i^2}
	\le \frac{4}{t^2}\big[\mathrm{E} e^{\beta\sum_{i\in I} \xi_i^2}\big]^{t/\beta}\le \frac{4}{t^2}e^{Bt\beta^{-1} |I|}.
	\]
	To summarize, Condition \eqref{cond_nonnormal} implies that for any 
	$\rho\in(0,B/2]$ and any $I\in  \mathcal{I}$, 
	\begin{align}
		\label{moment_bound}
		\mathrm{E}\big(\textstyle{\sum_{i\in I}} \xi_i^2 \big)^2 
		\le \frac{B^2}{(\beta\rho)^2}e^{2\rho |I|}.
	\end{align}

	In some proofs we need a technical condition on the parameter $\varkappa$ 
	appearing in \eqref{prior_lambda}.\smallskip\\
	{\sc Condition (A2).} The parameter $\varkappa$ of the prior 
	$\lambda$ defined by \eqref{prior_lambda} satisfies
	\begin{align}
		\label{cond_technical}
		\tag{A2}
		\varkappa>\bar{\varkappa}\triangleq(12-\beta+4B)/(4\beta),
	\end{align}  
	where $\beta, B$ are from Condition \eqref{cond_nonnormal}.
	
	Finally, we give one more technical definition which we will need in the claims.
	For constants $\beta,B$ from Condition \eqref{cond_nonnormal}, define 
	\begin{align}
		\label{def_bar_tau}
		\bar{\tau}=\bar{\tau}(\varkappa,\beta,B)\triangleq 
		3(\varkappa\beta+\tfrac{\beta}{2}+B)/\beta.
	\end{align}

	\section{Main results}
	\label{main_results}
	In this section we give the main results of the paper.
	From now on, by $\hat{\pi}(\vartheta|X)$ (with corresponding expectation $\hat{\mathrm{E}}(\cdot |X)$)
	we denote either $\tilde{\pi}(\vartheta|X)$ defined by \eqref{emp_norm_posterior} 
	or $\check{\pi}(\vartheta|X)$ defined by \eqref{emp_emp_posterior},
	and $\hat{\theta}$ will stand either for $\tilde{\theta}$  defined by 
	\eqref{estimator} or for $\check{\theta}$ defined by \eqref{emp_emp_posterior}.
	Also, $\hat{\pi}(I \in \mathcal{G}|X)$ should be read as 
	$\tilde{\pi}(I \in \mathcal{G}|X)$ in case $\hat{\pi}=\tilde{\pi}$, 
	and as $\mathbbm{1}\{\hat{I}\in\mathcal{G}\}$ in case $\hat{\pi}=\check{\pi}$, 
	for all $\mathcal{G}\subseteq \mathcal{I}$ that appear in what follows.
	Hence, $\hat{\pi}(I|X) =\tilde{\pi}(I|X)$ 
	and $\mathrm{E}_{\theta}\hat{\pi}(I\in\mathcal{G}|X)
	=\mathrm{E}_{\theta}\tilde{\pi}(I\in\mathcal{G}|X)$ in the former case,
	and  $\hat{\pi}(I|X)=\mathbbm{1}\{\hat{I}=I\}$ and $\mathrm{E}_{\theta}\hat{\pi}(I\in\mathcal{G}|X)
	=\mathrm{P}_{\theta}(\hat{I}\in\mathcal{G})$ in the latter case.
	
	\subsection{Oracle rate}
	\label{sec_oracle_rate}
	The empirical Bayes posterior $\hat{\pi}(\vartheta|X)$ is a random mixture over 
	$\tilde{\pi}_I(\vartheta|X)$, $I\in\mathcal{I}$. 
	From the $\mathrm{P}_{\theta}$-perspective, each posterior 
	$\tilde{\pi}_I(\vartheta|X)$ 
	(and the corresponding estimator $\tilde{\mathrm{E}}_I(\vartheta|X)=X(I)$)
	contracts to the true parameter $\theta$ with the local rate
	$R^2(I,\theta)=\sum_{i\in I^c} \theta_{i}^2+\sigma^2|I|$.
	Indeed, since $\tilde{\mathrm{E}}_I(\vartheta|X)=X(I)=(X_i\mathbbm{1}\{i\in I\}, i\in\mathbb{N}_n)$,
	\eqref{emp_poster_I} and the Markov inequality yields
	\begin{align*}
		\mathrm{E}_{\theta}\tilde{\pi}_I( \|\vartheta-\theta\|^2&\ge M^2 R^2(I,\theta)| X)
		\le \frac{\mathrm{E}_{\theta}
			\|X(I)-\theta\|^2+\frac{K_n(I)\sigma^2|I|}{K_n(I)+1}}{ M^2 R^2(I,\theta)}
		\le \frac{2}{M^2}.
	\end{align*}
	For each $\theta\in\mathbb{R}^n$, among $I \in \mathcal{I}$ there exists the 
	best choice $I_o=I_o(\theta)=I_o(\theta,\sigma)$ (called the \emph{$R$-oracle}) 
	corresponding to the fastest local rate 
	$
	R^2(\theta)=R^2(\theta,\mathcal{I})=\min_{I\in\mathcal{I}}R^2(I,\theta)
	=\sum_{i\in  I_o^c} \theta_{i}^2+\sigma^2|I_o|.
	$
	Ideally, we would like to \emph{mimic} the $R$-oracle, i.e., to construct
	an empirical Bayesian procedure (e.g., $\hat{\pi}(\vartheta|X)$) which performs as good as 
	the oracle empirical Bayes posterior $\tilde{\pi}_{I_o}(\vartheta|X)$ without knowing $I_o$, 
	uniformly in $\theta\in \mathbb{R}^n$. 
	However, the lower bounds for the estimation problem (hence, also for the posterior contraction problem),
	obtained by \cite{Donoho&Johnstone:1994a} and later by \cite{Birge&Massart:2001},
	show that it is impossible to mimic the $R$-oracle and a logarithmic factor is the unavoidable price for
	the uniformity over $\mathbb{R}^n$ (otherwise this would contradict to the minimax lower bound over the scale of sparsity classes, cf.\ \cite{Birge&Massart:2001}). 
	Therefore only a modification of the risk $R$-oracle where the variance
	term $\sigma^2|I_o|$ is inflated with the factor $\log(en/|I_o|)$ (thought of as payment for not knowing $I_o$) 
	is ``mimicable".
	

	The above discussion motivates the following definition.
	Introduce the  family of
	local rates
	\begin{align}
		\label{oracle_I}
		r^2(I,\theta)=r^2_\sigma(I,\theta)=\sum_{i\in I^c} \theta_{i}^2+\sigma^2|I| \log(\tfrac{en}{|I|})
		=B(I,\theta) +V(I), \quad I\in\mathcal{I}, 
	\end{align}
	where $B(I,\theta) = \sum_{i\in I^c} \theta_{i}^2$  is  the bias part of the rate
	and $V(I)=V(I,\sigma,n)=\sigma^2|I| \log(\tfrac{en}{|I|})$ is the adjusted variance part, 
	the variance term $\sigma^2|I|$ of the rate $R(I,\theta)$ multiplied by 
	the logarithmic factor $\log(\tfrac{en}{|I|})$.
	There exists the best choice $I_o=I_o(\theta)=I_o(\theta,\sigma^2)=I_o(\theta,\sigma^2,n)
	\in\mathcal{I}$ (called \emph{oracle}) at which the rate \eqref{oracle_I} is minimal: 
	\begin{align}
		\label{oracle}
		r^2(\theta)&=r^2(\theta,\mathcal{I})=\min_{I\in\mathcal{I}}  r^2(I,\theta)
		=r^2(I_o(\theta),\theta)
		=B(I_o(\theta),\theta)+V(I_o(\theta)),
	\end{align}
	called the \emph{oracle rate}. 
	Note that the oracle $I_o$ may not be unique (but $|I_o|$ is unique) 
	as some coordinates of $\theta$ can coincide, 
	in that case take the one with the earliest coordinates.
	Clearly,
	$I_o= \{i\in[n]: \theta_i^2\ge \theta_{[i_o]}^2\}$, where $i_o=|I_o|=\arg\!\min_{k\in[n]_0}\{
	\sum_{i=1}^{n-k}\theta_{(i)}^2 + \sigma^2 k \log(\tfrac{en}{k}\}$.
	Thus the oracle $I_o$ classifies
	the coordinates $(\theta_i, i \in I_o)$ as \emph{significant}
	and  the coordinates  $(\theta_i, i \in I_o^c)$ as \emph{insignificant}.
	The bias related term $B(I_o(\theta),\theta)=\sum_{i\in I_o^c} \theta_{i}^2
	=\sum_{i=1}^{n-i_o}\theta_{(i)}^2 $ of the oracle rate  is called 
	the \emph{excessive bias}. This is the error the oracle makes 
	when setting insignificant coordinates of $\theta$ to zero.
	The variance related term $\sigma^2 |I_o|\log(\tfrac{en}{|I_o|})$
	is the error the oracle makes when recovering the significant coordinates
	(the log factor is the payment for not knowing the locations). 
	The definition \eqref{oracle} of the oracle $I_o$ implies the following characterization 
	of the significant coordinates $\{\theta_{[i]}, i=1,\ldots, i_o\}$:
	\begin{equation}
		\label{chacter_signif}
		\begin{aligned}
			\theta^2_{[i_o]} &\ge \sigma^2\big[\log(\tfrac{en}{i_o})-(i_o-1)\log(\tfrac{i_o}{i_o-1})\big],\\
			\theta^2_{[i_o]}+\theta^2_{[i_o+1]}&\ge 
			\sigma^2\big[2\log(\tfrac{en}{i_o})-(i_o-2)\log(\tfrac{i_o}{i_o-2})\big],\\
			&\ldots, \\
			\textstyle{\sum_{i=1}^{i_o}}\theta_{[i]}^2 &\ge\sigma^2 i_o \log(\tfrac{en}{i_o}).
		\end{aligned}
	\end{equation}
	The insignificant coordinates $\{\theta_{(i)}, i=1,\ldots, n-i_o\}$ can be characterized in a similar manner.
	

	Introduce a family of the so called $\tau$-oracles $I_o^\tau=I_o^\tau(\theta)=I_o(\theta, \tau\sigma^2)$, 
	$\tau\ge 0$ and let $i_\tau=|I_o^\tau(\theta)|$ be the corresponding cardinalities. 
	A $\tau$-oracle $I_o^\tau(\theta)$ is just the usual oracle defined by \eqref{oracle} 
	with  $\sigma^2$ substituted by $\tau\sigma^2$, the oracle itself is the $\tau$-oracle
	with $\tau=1$: $I_o(\theta)=I_o^1(\theta)$.  Notice that $I_o^{\tau_1}\subseteq I_o^{\tau_2}$ 
	if $\tau_1\ge\tau_2$. 
	For $\tau \downarrow 0$, $r^2(\theta, I_o^\tau)\downarrow 0$  and
	the ``limiting'' $\tau$-oracle recovers the active index set 
	$I^*=I^*(\theta)=\{i\in [n]:\, \theta_{i} \not =0\}$ in the sense that  
	$I_o^\tau \uparrow I^*$ as $\tau \downarrow 0$.
	Informally, since the $\tau$-oracle is defined by substituting $\tau\sigma^2$ instead of 
	$\sigma^2$ in the oracle rate, one can think of the $\tau$-oracle with $\tau\in[0,1)$ as if $X$ 
	is observed with a ``magnifying glass'' 
	since the error variance is reduced by the factor $\tau$ so that the $\tau$-oracle 
	can distinguish more coordinates from zero. 
	In the case $\tau>1$, the error variance in \eqref{model} increases by the factor $\tau$ 
	(as if the observations $X_i$'s get blurred), resulting in a smaller set of significant 
	coordinates recovered by the $\tau$-oracle.
	However, all $\tau$-oracle rates $r^2(\theta, I_o^\tau)$, $\tau>0$, are related to 
	the oracle rate $r^2(\theta)=r^2(\theta, I_o^1)$ by the trivial relations
	\[
	r^2(\theta)\le r^2(\theta, I_o^\tau)\le \tau r^2(\theta) \quad \text{for} \;\; \tau\ge 1, \quad 
	r^2(\theta, I_o^\tau) \le r^2(\theta) \le \tau^{-1} r^2(\theta,I_o^\tau) \quad \text{for} \;\; 0<\tau< 1.
	\]
	So, in principle we can obtain the result for any $\tau$-oracle rate $r^2(\theta,I_o^\tau)$
	via the result for the oracle rate $r^2(\theta)$ and vice versa, but  
	at the price of some multiplicative constant. 
	
	Actually, we can look at all $\tau$-oracles $I_o^\tau$, $\tau\ge 0$,
	from the following general perspective. Introduce a family of $n+1$ sets
	\begin{align}
		\label{family_I_o}
		\mathcal{I}_o= \{I_o(k), \, k\in[n]_0\}, \quad \text{where} 
		\quad I_o(k) = \{i\in[n]: \theta^2_i\ge \theta_{[k]}^2\}.
	\end{align} 
	Clearly, these are embedded sets $\varnothing\triangleq I_o(0) 
	\subseteq I_o(1)\subseteq I_o(2) \subseteq \ldots \subseteq I_o(n)=[n]$.
	Now notice that the oracle set 
	$I_o(\theta)$ and actually all $\tau$-oracles $I_o^\tau(\theta)$, $\tau\ge 0$,
	are all from this family, in fact, $I_o = I_o(i_o)$ and $I_o^\tau=I_o(i_\tau)$.

	%

	\subsection{Contraction results with oracle rate}
	The following theorem establishes that the empirical Bayes posterior $\hat{\pi}(\vartheta|X)$ 
	(which is either $\tilde{\pi}(\vartheta|X)$ defined by \eqref{emp_norm_posterior} 
	or $\check{\pi}(\vartheta|X)$ defined by \eqref{emp_emp_posterior})
	contracts to $\theta$  with the oracle rate $r(\theta)$ from the frequentist  
	$\mathrm{P}_{\theta}$-perspective,  and the empirical Bayes posterior mean $\hat{\theta}$
	which is either $\tilde{\theta}$  defined by \eqref{estimator} or $\check{\theta}$ 
	defined by \eqref{emp_emp_posterior}) converges to $\theta$ with the oracle rate $r(\theta)$, uniformly 
	over the entire parameter space.

	\begin{theorem} 
		\label{th1} 
		Let Conditions \eqref{cond_nonnormal} and \eqref{cond_technical}
		be fulfilled. Then there exist positive constants  $M_0, M_1, H_0, H_1, m_0, m_1$ 
		such that for any $\theta\in\mathbb{R}^n$ and any $M\ge 0$,
		\begin{align}
			\tag{i}
			\label{th1_i}
			\mathrm{E}_{\theta}\hat{\pi}\big( \|\vartheta-\theta\|^2
			\ge M_0r^2(\theta) +M\sigma^2 | X\big)  &\le H_0 e^{-m_0 M},\\
			\tag{ii}
			\label{th1_ii}
			\mathrm{P}_{\theta}\big( \|\hat{\theta}-\theta\|^2\ge M_1 r^2(\theta)+M\sigma^2\big)
			&\le H_1 e^{-m_1 M}.
		\end{align}
	\end{theorem}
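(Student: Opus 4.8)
The plan is to reduce both \eqref{th1_i} and \eqref{th1_ii} to the behaviour of the random weights $\tilde{\pi}(I|X)$ from \eqref{emp_P(I|X)} and of the selector $\hat{I}$ from \eqref{I_MAP}. First I would record the explicit form: plugging $K_n(I)=\tfrac{en}{|I|}-1$ and $\mu_i=X_i$ into \eqref{emp_P(I|X)} shows that, up to an $I$-independent factor,
\[
\tilde{\pi}(I|X)\ \propto\ \exp\Big\{\tfrac{1}{2\sigma^2}\Phi(I)\Big\},\qquad
\Phi(I)\triangleq\sum_{i\in I}X_i^2-(2\varkappa+1)\sigma^2|I|\log\big(\tfrac{en}{|I|}\big),
\]
so that $\hat{I}=\argmax_{I}\Phi(I)$ and $\mathrm{E}_{\theta}\Phi(I)$ equals $\|\theta\|^2-\sum_{i\in I^c}\theta_i^2-(2\varkappa+1)\sigma^2|I|\log(\tfrac{en}{|I|})$ up to a term of order $\sigma^2|I|$. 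Thus, modulo the common $\|\theta\|^2$ and lower-order terms, maximizing $\mathrm{E}_{\theta}\Phi$ is minimizing a $\tau$-rate $r^2_\tau(I,\theta)$ with $\tau\asymp\varkappa$, which by the trivial comparison $r^2_\tau(\theta)\le\tau\, r^2(\theta)$ is comparable to $r^2(\theta)$. This identifies $I_o$ (or $I_o^\tau$) as the "population" maximizer and motivates calling a set $I$ \emph{bad} if either $R^2(I,\theta)\ge M_0 r^2(\theta)+\tfrac12 M\sigma^2$ or $|I|$ exceeds a fixed multiple of $|I_o|$.

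Next I would set up the deviation control. Writing $X_i^2-\theta_i^2-\sigma^2=2\sigma\theta_i\xi_i+\sigma^2(\xi_i^2-1)$, the fluctuation $\Phi(I)-\mathrm{E}_{\theta}\Phi(I)$ splits into a cross term $2\sigma\sum_{i\in I}\theta_i\xi_i$ and a centred term $\sigma^2\sum_{i\in I}(\xi_i^2-1)$; the cross term is absorbed via $2\sigma\theta_i\xi_i\le\varepsilon\theta_i^2+\varepsilon^{-1}\sigma^2\xi_i^2$, so everything is driven by $\sum_{i\in I}\xi_i^2$, which is exactly what Condition \eqref{cond_nonnormal} controls through $\mathrm{E}\exp\{\beta\sum_{i\in I}\xi_i^2\}\le e^{B|I|}$ and the moment bound \eqref{moment_bound}. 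Combining a Chernoff bound per $I$ with the union bound over the $\binom{n}{k}\le(\tfrac{en}{k})^k$ subsets of size $k$, and using that the penalty in $\Phi$ carries the factor $2\varkappa+1$ with $\varkappa>\bar{\varkappa}$ from \eqref{cond_technical}, I obtain a "good event" $\mathcal{A}=\mathcal{A}_M$ on which, simultaneously for all $I\in\mathcal{I}$, one has $\Phi(I_o)-\Phi(I)\ge c\big(R^2(I,\theta)-r^2(\theta)\big)-C\sigma^2|I_o|-CM\sigma^2$ for bad $I$, and $\mathrm{P}_{\theta}(\mathcal{A}^c)\le He^{-mM}$. This is the step where \eqref{cond_nonnormal} and \eqref{cond_technical} are really used, and it is \textbf{the main obstacle}: the union bound must beat the combinatorial entropy $k\log(en/k)$ uniformly over all $I$ while the summands involve $\sum_{i\in I}\xi_i^2$ under only an exchangeable (non-independent, non-Gaussian) exponential-moment hypothesis; this is presumably the content of one of the auxiliary lemmas of the paper.

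On $\mathcal{A}$ the weights then concentrate: bounding the denominator of $\tilde{\pi}(I|X)$ from below by the single term $J=I_o$ gives $\sum_{I\ \mathrm{bad}}\tilde{\pi}(I|X)=\sum_{I\ \mathrm{bad}}e^{(\Phi(I)-\Phi(I_o))/2\sigma^2}\big/\sum_J e^{(\Phi(J)-\Phi(I_o))/2\sigma^2}\le e^{-m'M}$ after the standard entropy-versus-weight bookkeeping, and likewise $\hat{I}$ is not bad on $\mathcal{A}$. For \eqref{th1_i}, split $\hat{\pi}(\|\vartheta-\theta\|^2\ge M_0r^2(\theta)+M\sigma^2|X)=\sum_I\tilde{\pi}_I(\|\vartheta-\theta\|^2\ge\cdot\,|X)\tilde{\pi}(I|X)$ (for $\check{\pi}$ this reduces to the single term $I=\hat{I}$): for good $I$, under $\tilde{\pi}_I$ from \eqref{emp_poster_I} one has $\vartheta-\theta=(X(I)-\theta)+\sigma_I Z$ with $\|X(I)-\theta\|^2\le R^2(I,\theta)$ plus controlled noise on $\mathcal{A}$ and $\sigma_I\le\sigma$, so a noncentral-$\chi^2$ tail bound contributes $\le H_0e^{-m_0M}$; for bad $I$ use the weight bound. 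Finally \eqref{th1_ii}: for $\check{\theta}=X(\hat{I})$ write $\|\check{\theta}-\theta\|^2=\sum_{i\in\hat{I}^c}\theta_i^2+\sigma^2\sum_{i\in\hat{I}}\xi_i^2\le R^2(\hat{I},\theta)+\sigma^2\sum_{i\in\hat{I}}(\xi_i^2-1)$, and both pieces are controlled on $\mathcal{A}$ (the first because $\hat{I}$ is not bad, the second via \eqref{cond_nonnormal}); for $\tilde{\theta}$ use Jensen, $\|\tilde{\theta}-\theta\|^2\le\tilde{\mathrm{E}}(\|\vartheta-\theta\|^2|X)=\sum_I\tilde{\pi}(I|X)\big(\|X(I)-\theta\|^2+\tfrac{K_n(I)\sigma^2|I|}{K_n(I)+1}\big)$ and bound this by the same good/bad split, the extra slack $M\ge1$ covering the deterministic $\sigma^2$ offsets. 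Tracking that all constants depend only on $\varkappa$ (with $\beta,B$ fixed) completes the proof.
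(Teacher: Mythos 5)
Your proposal takes a genuinely different route from the paper's. You build a single good event $\mathcal{A}_M$ on which the penalized criterion $\Phi$ is controlled \emph{simultaneously for all} $I\in\mathcal{I}$, via Chernoff bounds plus a union bound over the $\binom{n}{k}$ sets of each size $k$, and then work deterministically on $\mathcal{A}_M$. The paper never attempts a uniform-in-$I$ high-probability control of $\Phi$. Instead, Lemma~\ref{lemma1} bounds $\mathrm{E}_\theta\hat\pi(I|X)$ directly for each $I$ by the fractional-moment trick: since $\hat\pi(I|X)\le 1$ and $\hat\pi(I|X)\le \lambda_I\prod_i\phi(\cdot)/\big(\lambda_{I_0}\prod_i\phi(\cdot)\big)$ (dropping all but the $J=I_0$ term in the denominator), one gets $\hat\pi(I|X)\le\big[\lambda_I\prod\phi/\lambda_{I_0}\prod\phi\big]^h$ for $h\in[0,1]$, and expectation under \eqref{cond_nonnormal} then yields the bound of Lemma~\ref{lemma1}; the $\check\pi$ case follows from the same ratio via Markov. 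The subsequent sum over $I$ converges because the prior $\lambda_I$ and the variance part of the conditional posterior both carry the $|I|\log(en/|I|)$ factor. The paper does use a good event, but only for the order statistics $\sum_{i\le m}\xi_{[i]}^2$ in Lemma~\ref{lemma4}, to handle the Gaussian posterior spread ($T_1,T_3$) and the noise on $\hat I$ ($\bar T_1,\bar T_2$); the posterior weights themselves are always handled in expectation. This avoids the union-bound bookkeeping you need and covers $\tilde\pi$ and $\check\pi$ in one stroke.

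There is also a concrete gap in your ``entropy-versus-weight bookkeeping''. Your claimed inequality on $\mathcal{A}_M$ has $\Phi(I_o)-\Phi(I)\ge c\big(R^2(I,\theta)-r^2(\theta)\big)-C\sigma^2|I_o|-CM\sigma^2$ with $R^2(I,\theta)=\sum_{i\in I^c}\theta_i^2+\sigma^2|I|$, which lacks the logarithmic factor. On $\mathcal{A}_M$ this gives $e^{(\Phi(I)-\Phi(I_o))/2\sigma^2}\lesssim e^{-c|I|/2}$, so summing over sets of size $k$ costs $\binom{n}{k}e^{-ck/2}$, and the total over $k$ is $(1+e^{-c/2})^n$, which is not bounded. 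The penalty in $\Phi$ carries $(2\varkappa+1)\sigma^2|I|\log(en/|I|)$ precisely so that the bad-set inequality should be stated in terms of $r^2(I,\theta)$ (or a $\tau$-rate with $\tau\asymp\varkappa$), not $R^2(I,\theta)$; then the sum becomes $\sum_k\binom{n}{k}(en/k)^{-c'k}$, which is finite once $c'>1$, and \eqref{cond_technical} is there to guarantee exactly this (compare the prefactor $(ne/|I|)^{-c_1|I|}$ with $c_1>2$ in Lemma~\ref{lemma2}). As written, the decisive summation step does not close. If you restate the badness criterion and the good-event inequality with $r^2(I,\theta)$, and carefully absorb the cross term $2\sigma\theta_i\xi_i$ when passing from $\Phi(I_o)-\Phi(I)$ to the deterministic rate, the union-bound route should become viable; note also that for claim \eqref{th1_ii} the paper does not proceed on a good event alone but additionally uses Markov with Cauchy--Schwarz plus the moment bound \eqref{moment_bound} to handle the bad index sets, because on those sets $\sum_{i\in I}\xi_i^2$ is not controlled by $\mathcal{A}_M$ uniformly.
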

	\begin{remark}
		\label{rem2_or_in}
		Notice that already claim \eqref{th1_i} of the theorem contains an oracle bound 
		for the estimator $\hat{\theta}$. Indeed, by Jensen's inequality, we derive the oracle inequality 
		\begin{align}
			\label{cor_est}
			\mathrm{E}_{\theta}\|\hat{\theta}-\theta\|^2 \le
			\mathrm{E}_{\theta} \hat{\mathrm{E}}(\|\vartheta-\theta\|^2|X)
			\le M_0r^2(\theta) + H_0 \int_0^{+\infty} \!\!\!\! e^{-m_0u/\sigma^2} du
			=M_0r^2(\theta)  + \frac{H_0\sigma^2}{m_0}. 
		\end{align}
		However, comparing claim \eqref{th1_ii} with the oracle inequality \eqref{cor_est}, 
		we see that claim \eqref{th1_ii} is actually stronger (and more refined) than 
		\eqref{cor_est} and therefore requires a separate proof.
	\end{remark}
	
	\begin{remark}
		A few more remarks on the theorem are in order.
		\begin{itemize}
			\item[\rm{(i)}]
			The above local result implies  the minimax 
			optimality over various sparsity scales, see Section \ref{implication} for more detail on this.
			\item[\rm{(ii)}]
			The constants  $M_0, M_1, H_0, H_1, m_0, m_1>0$ in the theorem depend only on $\beta, B$ 
			and some also on $\varkappa$, the exact expressions can be found in the proof.
			\item[\rm{(iii)}]
			The non-asymptotic exponential bounds in terms of the constant $M$
			from the expression $M'r^2(\theta) +M\sigma^2$ (with some fixed $M'$)  in claims (i) and (ii)
			of the theorem provide a very refined characterization of the quality of the posterior 
			$\hat{\pi}(\vartheta|X)$ and estimator $\hat{\theta}$, finer 
			than, e.g., the traditional oracle inequalities like \eqref{cor_est}.
			This refined formulation allows for subtle analysis in various asymptotic regimes ($n \to \infty$, $\sigma \to 0$, 
			or their combination) 
			as we can let $M$ depend in any way on $n$, $\sigma$, or both. 
		\end{itemize}
	\end{remark}

	The next theorem describes the frequentist behavior of 
	the selector $\hat{I}$ and the empirical Bayes posterior for $I$, saying basically 
	that $\hat{I}$ and $\tilde{\pi}(I|X)$ ``live'' on a certain set that is, in a sense, almost as good as the oracle $I_o=I_o(\theta)$ defined by \eqref{oracle}. 
	For any $\theta\in\mathbb{R}^n$, introduce
	\begin{align}
		\label{def_I_*}
		I_*=I_*(\theta) \triangleq I_o^{\tau_0}(\theta)=I_o(\theta,\tau_0\sigma^2), \quad i_*=|I_*|,
	\end{align}
	where we fix some $\varrho\in(0,1)$ and  $\tau_0> \tfrac{1+\varrho}{1-\varrho} \bar{\tau}$,
	$\bar{\tau}$ is defined by \eqref{def_bar_tau}. For example, we can take $\varrho = 0.1$ and 
	$\tau_0=\tfrac{11}{9}\bar{\tau}+0.1$.  
	\begin{theorem} 
		\label{th2} 
		Let Condition \eqref{cond_nonnormal} be fulfilled. The following relations hold
		for any $\theta\in\mathbb{R}^n$, $M\ge 0$.
		\begin{itemize}
			\item[(i)] Let $\varkappa>
			\tfrac{4+\beta+2B}{2\beta}$ (Condition \eqref{cond_technical} implies this).
			There exist $M'_0, H'_0>0$ such that 
			\begin{align*}
				&\mathrm{E}_{\theta}\hat{\pi}\big(I\in\mathcal{I}: |I| \log(\tfrac{en}{|I|})\ge  
				M'_0 |I\cap I_o| \log(\tfrac{en}{|I\cap I_o|})+M \big| X\big)\le H'_0 
				e^{-M}.
			\end{align*}
			\item[(ii)]
			Let  $\varkappa>\beta^{-1}-\tfrac{1}{2}$ 
			(Condition \eqref{cond_technical} implies this),
			$\bar{\tau}$ be defined by \eqref{def_bar_tau}. Fix any $I'\in\mathcal{I}$.
			Then there exist $H'_1, m'_0>0$ (independent of $\theta$ and $I'$) such that 
			\begin{align*}
				&\mathrm{E}_{\theta}\hat{\pi}\big(I\in\mathcal{I}:\, \sum_{i\in I'\backslash I} \tfrac{\theta^2_i}{\sigma^2} 
				\ge  \bar{\tau}|I\cup I'| \log(\tfrac{en}{|I\cup I'|})+M |X\big) \le 
				H'_1e^{-m'_0M}.
			\end{align*}
			In particular,  let $I_*$ be defined by \eqref{def_I_*},
			then there exist $\alpha'_1,m'_1> 0$ such that 
			\begin{align}
				\label{th2_ii}
				&\mathrm{E}_{\theta}\hat{\pi}\big(I\in\mathcal{I}: |I| \log(\tfrac{en}{|I|})\le 
				\varrho |I_*| \log(\tfrac{en}{|I_*|}) - M \big|X\big) \le 
				H'_1\big(\tfrac{en}{ |I_*|}\big)^{-\alpha'_1|I_*|} e^{-m'_1M}.
			\end{align}
			
			\item[(iii)]
			Let Condition \eqref{cond_technical} be fulfilled,
			$c_1, c_2, c_3$ be the constants defined in Lemma \ref{lemma2}. Then
			\begin{align*}
				\mathrm{E}_{\theta}\hat{\pi}(I \in \mathcal{I}: r^2(I,\theta)\ge c_3 r^2(\theta) + M \sigma^2|X) 
				\le  C_0 e^{-c_2 M}, \quad \text{where}\;\;
				C_0 =(1-e^{1-c_1})^{-1} .
			\end{align*}
		\end{itemize}
	\end{theorem}
	
	\begin{remark} 
		The assertion \eqref{th2_ii} holds also for $I_*$  defined differently:
		\begin{align}
			\label{def_I*}
			I_*=I_*(\theta)=I_*(\theta,\tau)=I_*(\theta, \tau,\varrho)= \{i\in[n]: \theta_i^2 \ge\theta_{[i_*]}^2 \}
		\end{align}
		and $i_*=i_*(\tau,\varrho,\theta)=\max\{k\in[n]_0: 
		\sum_{\varrho k}^k\theta_{[i]}^2\ge\tau(1-\varrho)\sigma^2 k \log(\tfrac{en}{k})\}$.
		Indeed, the only difference in the proof of  \eqref{th2_ii} for $I_*$ defined by \eqref{def_I*} 
		is that, instead of \eqref{proof_th2_ii}, we have the bound
		\[
		\textstyle{\sum_{i\in I_*\backslash I}} \tfrac{\theta^2_i}{\sigma^2} 
		\ge\textstyle{\sum_{\varrho |I_*|}^{|I_*|}}\tfrac{\theta_{[i]}^2}{\sigma^2} 
		\ge \tau (1-\varrho) |I_*| \log(\tfrac{en}{|I_*|}) \ge 
		\tau' |I\cup I_*|\log(\tfrac{en}{|I \cup I_*|})
		+\tau' M,
		\]
		so that $m'_1=\tau' m'_0$ in this case and the rest of the proof is the same.
	\end{remark}
	
	For any $\theta\in\mathbb{R}^n$, the set $I_*(\theta)$ is the representative 
	from the family $\mathcal{I}_o$ (defined by 
	\eqref{family_I_o}) that consists of
	``distinctly significant'' coordinates of $\theta$ such that 
	$\hat{\pi}(I|X)$ makes (almost) no mistake for selecting a 
	big proportion of this set. 
	Recall that for $\tau_1\ge \tau_2$, $I_o(i_{\tau_1})=I_o^{\tau_1}\subseteq I_o^{\tau_2}= I_o(i_{\tau_2})$ 
	so that $i_{\tau_1} \le i_{\tau_2}$.
	Since $I_*=I_o^{\tau_0}$ for $\tau_0>1$, we have $I_*\subseteq I_o$. 
	So, the claims of the above theorem roughly mean that $\hat{\pi}(I|X)$ 
	(i.e., the selector $\hat{I}$ and the posterior $\tilde{\pi}(I|X)$) 
	lives in the ``shell'' $\{I: I_o(K^{-1} i_*) \subseteq I \subseteq I_o(Ki_o)\}$ 
	for some sufficiently large $K$, where the sets $I_o(k)$ are defined by \eqref{family_I_o}. 
	So, if $I_*$ and $I_o$ are ``close'' to each other (i.e., $i_o \le C i_*$ for some $C>0$), 
	then $\hat{\pi}(I|X)$ recovers well the oracle structure $I_o$. 
	The case $i_* \ll i_o$ is problematic (the corresponding 
	$\theta$ is ``deceptive'') as the living shell for $\hat{\pi}(I|X)$ is then too wide.
	Property (i) of Theorem \ref{th2} claims good ``over-dimensionality'' control of $\hat{\pi}(I|X)$ 
	in terms of the oracle $I_o$. In other words, the method does a good job in assigning 
	insignificant coordinates to zeros, for any $\theta\in\mathbb{R}^n$. On the other hand, 
	there is no full ``under-dimensionality''  control for $\hat{\pi}(I|X)$, as property (ii) is only in terms 
	of the set $I_*$ (which may be much ``smaller'' than $I_o$): 
	basically for deceptive $\theta$'s, $\hat{\pi}(I|X)$ can make relatively many errors by assigning many 
	significantly non-zero coordinates to zeros.
	
	This is reminiscent of the same asymmetric situation for adaption to smoothness 
	where it is also possible to control under-smoothing (e.g., by penalization procedures or 
	Lepski's method), but not over-smoothing. 
	In view of the lower bound results mentioned in the introduction, this is not an artefact of 
	the method, it is a fundamental, unavoidable problem. It occurs for the so called deceptive
	parameters $\theta$ that have many smallish coordinates, just slightly under the noise level.  
	Interestingly, controlling over-dimensionality 
	(or under-smoothing for smoothness structures) is enough for solving adaptive estimation problem, 
	but not for uncertainty quantification where we need both over-dimensionality control 
	(for the optimal size) and under-dimensionality control (for the coverage).
	This is possible for the non-deceptive parameters described by the so called EBR condition 
	and introduced in the next section.
	

	

	\subsection{Confidence ball under excessive bias restriction}
	\label{subsec_conf_set}
	
	Theorem \ref{th1} establishes the strong local optimal properties 
	of the empirical Bayes posterior $\hat{\pi}(\vartheta|X)$
	and the empirical Bayes posterior mean $\hat{\theta}$, 
	but these do not solve the uncertainty quantification problem yet.
	Let us construct a confidence ball by using the empirical Bayes 
	posterior $\check{\pi}(\vartheta|X)$ defined by \eqref{emp_emp_posterior}.
	Since $\check{\pi}(\vartheta|X)=\bigotimes_{i=1}^n\mathrm{N}(\check{\theta}_i, \check{\sigma}^2_i)$
	with $\check{\theta}_i=X_i \mathbbm{1}\{i\in\hat{I}\}$ and
	$\check{\sigma}^2_i= (1-|\hat{I}|/en)\sigma^2 \mathbbm{1}\{i\in \hat{I}\}$, 
	denoting by $\chi^2_{k,\alpha}$ the $(1-\alpha)$-quantile of $\chi^2_k$-distribution we have 
	\[
	\check{\pi}\big(\|\vartheta-\check{\theta}\|^2 \le\sigma^2 \chi^2_{|\hat{I}|,\alpha} |X\big)\ge
	\check{\pi}\big(\|\vartheta-\check{\theta}\|^2\le(1-|\hat{I}|/en)\sigma^2\chi^2_{|\hat{I}|,\alpha}
	|X\big)=1-\alpha.
	\]
	But $\chi^2_{|\hat{I}|,\alpha}$ is bounded by a constant multiple of $|\hat{I}|$, and 
	hence for simplicity the latter can replace the former to obtain a credible ball.
	This leads to $B(\check{\theta},M\sigma {|\hat{I}|^{1/2}})$ as a credible ball for $\theta$, which 
	can be guaranteed to have at least a given level of credibility by choosing 
	a sufficiently large constant $M$. 
	From (i) of Theorem \ref{th2} it follows that 
	$|\hat{I}|$ is of the order $|I_o|$. 
	However, it is clear that $B(\check{\theta},M\sigma |\hat{I}|^{1/2})$ 
	cannot have a guaranteed coverage, since otherwise the center $\check{\theta}$ 
	would be an estimator that mimics the $R$-oracle uniformly in $\theta\in\mathbb{R}^n$, which is impossible as we discussed earlier.
	Hence to obtain coverage, the order of the radius of any confidence ball must contain a logarithmic factor. This leads us to the inflated credible ball $B(\check{\theta}, M\hat{r})$, where 
	\begin{align}
		\label{check_r}
		\hat{r}^2=\hat{r}^2(X)= \sigma^2+\sigma^2 |\hat{I}|\log(en/|\hat{I}|).
	\end{align}

	The empirical Bayes posterior $\check{\pi}(\vartheta|X)$ is well concentrated 
	(in fact, in a ball of the size $M\sigma^2|I_o|$), but not around the truth, rather around
	its mean $\check{\theta}$ which in general is away from the truth 
	by the distance at most of the order of the oracle rate $r(\theta)$. 
	We can also construct a confidence ball by using the posterior $\tilde{\pi}(\vartheta|X)$ 
	defined by \eqref{emp_norm_posterior} with the same resulting properties, 
	but with more involved mathematical derivations.
	Property (i) of Theorem \ref{th2} means that $\hat{r}^2$ is at most of the order of 
	the variance part of the oracle rate $r^2(\theta)$, so the size property 
	holds uniformly over $\theta\in\mathbb{R}^n$. 
	But this goes at the expense of the coverage, namely, the coverage property does not hold uniformly. 
	
	Indeed, according to Theorem \ref{th2}, 
	$\varrho\sigma^2 |I_*|\log(en/|I_*|) -M \sigma^2 \le \hat{r}^2 \le M'_0 \sigma^2 |I_o|\log(en/|I_o|)+M \sigma^2$ 
	with large probability. But this shell can be wide if  
	$\sigma^2 |I_*|\log(en/|I_*|)\ll \sigma^2 |I_o|\log(en/|I_o|)$. 
	If this happens (for deceptive $\theta$'s), then the coverage 
	property of the ball $B(\check{\theta}, M\hat{r})$ cannot be guaranteed because its radius 
	can be of a smaller order than the oracle rate $r^2(\theta)$. 
	This problem will not occur for those (non-deceptive) $\theta$'s
	for which the bias part of the rate $r^2(I_*,\theta)$ (see definition \eqref{oracle_I}) 
	is within a multiple of its variance part $\sigma^2 |I_*|\log(en/|I_*|)$. 
	Indeed, then $\sigma^2 |I_*|\log(en/|I_*|)$ must be at least some multiple of 
	$r^2(I_*,\theta)$ which is in turn bigger than the oracle rate $r^2(\theta)$ by the definition 
	\eqref{oracle} of the oracle. This means that $\sigma^2 |I_*|\log(en/|I_*|)$ is at least 
	of the oracle rate order, which, together with (ii) of Theorem \ref{th2}, imply 
	that $\hat{r}$ is also at least of the oracle rate order, resulting in a good coverage 
	of the confidence ball $B(\check{\theta}, M_2\hat{r}+M\sigma^2)$  for some $M_ 2$ 
	and sufficiently large $M$. This discussion motivates introducing the following condition. 
	\smallskip
	
	\noindent
	{\sc Condition EBR.}
	We say that $\theta\in\mathbb{R}^n$ satisfies the 
	\emph{excessive bias restriction} (EBR) condition with structural parameter $t\ge 0$ if  
	$\theta \in \Theta_{\rm eb}(t)$, where the corresponding set (called the \emph{EBR class}) is 
	\begin{align}
		\label{cond_ebr}
		\Theta_{\rm eb}(t)=\Theta_{\rm eb}(t,\tau_0)
		=\Big\{\theta\in\mathbb{R}^n : 
		\textstyle{\sum_{i\in I_*^c}} \theta_i^2 \le t \sigma^2\big(1+|I_*|\log(\tfrac{en}{|I_*|})\big)\Big\},
	\end{align}
	where the set $I_*=I_o^{\tau_0}$ is defined by \eqref{def_I_*}. 
	
	The condition EBR essentially requires that the bias part of the rate $r^2(I_*,\theta)$ 
	is dominated by a multiple of its variance part (additional  $\sigma^2$ is needed to handle 
	the case $I_*=\varnothing$).   
	This is obviously satisfied also for the rate $r^2(I',\theta)$ for all $I'\in\mathcal{I}_o$ such that 
	$I_* \subseteq I'$ (hence also for the oracle $I_o$ since $I_* \subseteq I_o$), 
	where the family $\mathcal{I}_o$ is defined by \eqref{family_I_o}.
	Besides, $\Theta_{\rm eb}(t_1) \subseteq \Theta_{\rm eb}(t_2)$ for $t_1 \le t_2$, 
	and, by the definition of $I_*$, $\mathbb{R}^n = \Theta_{\rm eb}(\tau_0n)$.

	


	Now we can use  the center $\hat{\theta}$ and the radius $\hat{r}$ in 
	constructing a confidence ball for $\theta$. 
	The following theorem, which is the main result in the paper, describes 
	the coverage and size properties of the confidence ball based on
	$\hat{\theta}$ and $\hat{r}$. 
	
	\begin{theorem}
		\label{th8}
		Let Conditions \eqref{cond_nonnormal} and \eqref{cond_technical} be fulfilled. 
		Then there exist constants $M_2, H_2, m_2>0$  such that for any $t,M\ge 0$, and with 
		$\hat{R}^2_M=\hat{R}^2_M(M_2)=(t+1)M_2\hat{r}^2+(t+2)M\sigma^2$,
		\begin{align*}
			&\sup_{\theta\in\Theta_{\rm eb}(t)} \mathrm{P}_{\theta}
			\big(\theta\notin B(\hat{\theta},\hat{R}_M)\big) 
			\le H_2 e^{-m_2M},\; 
			\sup_{\theta\in\mathbb{R}^n}  \mathrm{P}_{\theta}\big(\hat{r}^2\ge 
			M'_0\sigma^2|I_o | \log(\tfrac{en}{|I_o|})\!+\!(M\!+\!1)\sigma^2\big)
			\le H'_0  e^{-M},
		\end{align*}
		where $\Theta_{\rm eb}(t)$ is defined by \eqref{cond_ebr}, 
		the constants  $M'_0,H'_0$ are defined in Theorem \ref{th2}.
	\end{theorem}
	
	\begin{remark}
		\label{rem4}
		Let the quantity $b(\theta)$ (called \emph{excessive bias ratio}) 
		be defined by
		\begin{align}
			\label{def_b}
			b(\theta)=b(\theta,\tau_0)=\frac{ \sum_{i\in I_*^c} \theta_i^2}
			{\sigma^2+\sigma^2|I_*|\log(en/|I_*|)}
			=\frac{ \sum_{i=i_*+1}^{n} \theta_{[i]}^2}
			{\sigma^2+\sigma^2i_*\log(en/i_*)}
			=\frac{B(I_*,\theta)}{\sigma^2+V(I_*, \theta)}.
		\end{align}
		Note that, when proving Theorem \ref{th8}, we actually established 
		the following local assertions: 
		there exist constants $M_2, \alpha_1,m''_1,H_2, m_2>0$  such that for any $\theta\in\mathbb{R}^n$ 
		and any $\alpha, M\ge 0$
		\begin{align*}
			&\mathrm{P}_{\theta}
			\big(\theta\notin B(\hat{\theta},[(b(\theta)+1)M_2\hat{r}^2+(b(\theta)+2)M\sigma^2]^{1/2}\big) \\
			& \qquad \qquad \le 
			H_1\big(\tfrac{en}{ |I_o|}\big)^{-\alpha_1|I_o|} e^{-m_1M}
			+H'_1 \big(\tfrac{en}{ |I_*|}\big)^{-\alpha'_1|I_*|}e^{-m''_1M}
			\le H_2 e^{-m_2M},\\
			&\mathrm{P}_{\theta}\big(\hat{r}^2\ge 
			\sigma^2 (M'_0+\alpha) |I_o | \log(\tfrac{en}{|I_o|})+(M+1)\sigma^2\big)
			\le H'_0 (\tfrac{ne}{|I_o|})^{-\alpha|I_o|} e^{-M},
		\end{align*}
		where all the other constants ($H_1, m_1, H'_1, \alpha'_1, M'_0,H'_0$) are 
		defined in Theorems \ref{th1} and \ref{th2}.
		Notice that the above size relation holds uniformly in $\theta\in\mathbb{R}^n$.
		Although the coverage relation is also uniform in $\theta\in\mathbb{R}^n$,
		the main (unavoidable) problem is the dependence of the coverage relation on
		$b(\theta)$. That is why we introduced the EBR condition which essentially 
		provides control over the quantity $b(\theta)$.  
	\end{remark}
	
	\begin{remark}
		\label{rem5}
		The smaller constant $\tau_0$ (involved in the definition of the EBR condition) is, 
		the less restrictive the EBR condition is, the limiting case 
		$\tau_0\downarrow 0$ corresponds basically to no condition. 
		However, the main message here is that for any specific distribution of error vector $\xi$
		there is always some value of the constant $\tau_0$ in the EBR condition 
		(bounded away from zero, depending on how ``bad'' $\xi$ is).
		We treat a general situation and are not concerned with the most exact (smallest) 
		value for $\tau_0$, our bound for $\tau_0$ is in terms of $\beta,B$ and possibly too conservative 
		for each specific distribution of $\xi$. 
	\end{remark}
	
	The idea of the set $I_*=I_o^{\tau_0}$ introduced by \eqref{def_I_*} is that it contains $i_*=|I_*|$ most significant  
	coordinates of vector $\theta$ that are (essentially) not missed by the 
	procedure $\hat{I}$.
	The bias term of the rate $r^2(I_*,\theta)$ is the error that is made when 
	setting significant coordinates to zero (whereas they may not be zero).  
	Large ratio $b(\theta)$ defined by \eqref{def_b} means that this error 
	is relatively large as compared to the variance part of the rate $r^2(I_*,\theta)$. In a way,
	such $\theta$'s ``trick'' the procedure $\hat{\theta}$ and can therefore be regarded as deceptive.
	For each $\theta\in\mathbb{R}^n$, $b(\theta)$ measures the amount of deceptiveness of $\theta$: 
	the bigger $b(\theta)$, the more deceptive $\theta$.
	The EBR condition says that the deceptiveness has to be restricted:
	$\Theta_{\rm eb}(t) = \{\theta\in\mathbb{R}^n: b(\theta) \le t\}$.
	An explicit example of EBR parameters is 
	the set of \emph{self-similar} parameters introduced in 
	\cite{vanderPas&Szabo&vanderVaart:2017} which is in our terms 
	$\Theta_{\mathrm{ss}}(p,c,\tau_0)=\{\theta\in \ell_0[p]: |I_*(\theta)| \ge c p\}$
	for $p\in[n]$, $c\in(0,1]$. If $\theta \in \Theta_{\mathrm{ss}}(p,c,\tau_0)$,  then
	$p\le c^{-1}|I_*|$ and 
	$\sum_{i\in I_*^c} \theta_i^2 
	\le\sum_{|I_*|}^{c^{-1} |I_*|} \theta^2_{[i]} \le 
	(c^{-1}-1)\tau_0 \sigma^2 |I_*|\log(\tfrac{en}{|I_*|})$, where the second inequality follows
	by the oracle definition. Hence,  
	$\Theta_{\rm{ss}}(p,c,\tau_0)\subseteq \Theta_{\rm eb}((c^{-1}-1)\tau_0)$. 
	Notice that $\Theta_{\rm{ss}}(p,c,\tau)\subseteq \Theta_{\rm eb}((c^{-1}-1)\tau)$ 
	for any $\tau>0$.
	
	In particular, for $\theta \in \Theta_{\mathrm{ss}}(p,1,\tau_0)=\{\theta\in\ell_0[p]: 
	\, |I_*(\theta)|=p\}$,  
	the insignificant coordinates  $I_*^c$ of such $\theta$'s are the true zeros
	and the significant coordinates $I_*$ are sufficiently distinct from zero.
	Then the set $I_*(\theta)$ coincides with the support $\text{supp}(\theta)\triangleq \{i\in[n]: \theta_i \not =0\}$, 
	i.e., $I_*(\theta)=\text{supp}(\theta)$, so that $B(I_*,\theta)=0$, implying 
	$\Theta_{\mathrm{ss}}(p,1,\tau_0) \subseteq \Theta_{\rm eb}(0)$. 
	This class consists of the ``nicest'' (least deceptive) parameters satisfying 
	the EBR condition with zero deceptiveness $t=0$.
	The uncertainty quantification result is the strongest for this class because 
	the inflating factor is the smallest as $t=0$.
	More about the EBR condition is in Section \ref{subsec_ebr}. 

	\subsection{Confidence ball of $n^{1/4}$-radius without EBR}
	By analyzing the previous results, we see that the resulting radius $\hat{r}$ of our constructed 
	confidence ball is of the oracle rate only under the EBR condition. In general, 
	$\hat{r}^2$ underestimates the oracle rate $r^2(\theta)$. The difference is the bias 
	term which may in general  be of a bigger order than the variance part, leading to a bad coverage. 
	Suppose we want to construct a confidence ball of a full coverage uniformly 
	over the whole space $\mathbb{R}^n$.
	Recall however that, in view of the above mentioned negative results of \cite{Li:1989},
	\cite{Cai&Low:2004}, \cite{Baraud:2004} and \cite{Nickl&vandeGeer:2013}, no data dependent 
	ball can provide full coverage and adaptive size simultaneously. Insisting on the full coverage,  
	one can at best adapt to sparsity levels only in the range $s\ge C\sqrt{n}$ 
	(i.e., actually for non-sparse parameters) and the term of order $\sigma^2\sqrt{n}$ should 
	be in the radius. 
	Let us give a heuristics behind this.
	An idea is to mimic the quantity $\|\theta-\hat{\theta}\|^2$ by $\hat{R}^2=\|X-\hat{\theta}\|^2$.
	Clearly, there is a lot of bias in $\hat{R}^2$, the biggest part of which is due to the term 
	$\sigma^2\|\xi\|^2$ contained in $\hat{R}$. To de-bias for that part, we need to subtract its expectation 
	$\sigma^2\mathrm{E}\|\xi\|^2 = n \sigma^2$, 
	where we assumed $\mathrm{Var}(\xi_i)=1$ in the model \eqref{model} for simplicity. 
	However, even de-biased quantity $\hat{R}^2$ can only be controlled up to a margin 
	of the order $\sigma^2 \sqrt{n}$. That is why a term of the order $\sigma n^{1/4}$ is necessary in the radius of the confidence ball to provide coverage uniformly over the whole space $\mathbb{R}^n$.
	
	
	To handle some technical issues 
	in this case, we impose the following additional condition.\smallskip\\
	{\sc Condition (A3).} Besides $X$ given by \eqref{model}, we also observe 
	$X'\in\mathbb{R}^n$ independent of $X$, where $X'=\theta+ \sigma \xi'$, 
	the random vector $\xi'$ satisfies the following relations:
	$\mathrm{E} \xi'_i =0$, $\mathrm{Var}(\xi'_i)\le C_\xi$, $ i\in[n]$; 
	\begin{align}
		\mathrm{P}\big(|\langle v, \xi' \rangle|\ge \sqrt{M}\big)\le\psi_1(M)\;\;
		\forall\, v\in\mathbb{R}^n: \,  
		\|v\| =1; \;
		\label{cond_A3}
		\tag{A3}
		\mathrm{P} \big(\big|\|\xi'\|^2- \mathrm{E} \|\xi'\|^2\big| \ge M \sqrt{n}\big) \le \psi_2(M).
	\end{align}
	Here $\psi_1(M), \psi_2(M)$ are some positive monotonically decreasing functions such that 
	$\psi_1(M)\downarrow 0$ and $\psi_2(M)\downarrow 0$ as $M\uparrow \infty$. \smallskip\\
	Condition (A4) is satisfied for independent normals 
	$\xi_i \overset{\rm ind}{\sim} \mathrm{N}(0,1)$ even if we do not have the sample $X'$ at our 
	disposal. Indeed, in this case we can ``duplicate'' the observations by randomization
	at the cost of doubling the variance in the following manner: 
	create samples $X' = X+\sigma Z$ and $X''=X-\sigma Z$, for a
	$Z=(Z_1,\ldots,Z_n)$ (independent of $X$) such that $Z_i\overset{\rm ind}{\sim} \mathrm{N}(0,1)$. 
	Relations \eqref{cond_A3} are then fulfilled with exponential  functions
	$\psi_l(M)=Ce^{-c M}$, $l=1,2$, for some $C,c>0$. 
	If the sub-gaussianity condition \eqref{subgaussian} is fulfilled for $\xi'$ 
	(which is the same as Condition (A1) in case of independent $\xi'_i$'s), then 
	$\psi_1(M) = e^{-\rho M}$.
	By Chebyshev's inequality, we see that the second relation in \eqref{cond_A3} 
	is fulfilled with function $\psi_2(M) = cM^{-2}$ for any zero mean independent 
	$\xi'_i$'s with 
	$\mathrm{E} \xi_i'^4\le C$. 
	
	Coming back to the problem of constructing a confidence ball of full coverage 
	uniformly over $\mathbb{R}^n$, let $\hat{\theta}$ and $\hat{I}$ be defined as before 
	and based on the sample $X$. We propose to mimic $\|\theta-\hat{\theta}\|^2$ by the
	de-biased quantity $\|X'-\hat{\theta}\|^2-n\sigma^2$ 
	plus additional $\sigma^2 \sqrt{n}$-order term to control its oscillations, 
	leading us to the following data dependent radius
	\begin{align}
		\label{radius2}
		\tilde{R}^2_M=\big(\|X'-\hat{\theta}\|^2-n\sigma^2 
		+2\sigma^2G_M\sqrt{n}
		\big)_+,
		\quad \text{where} \quad G_M=\sqrt{M(M+M_1)},
	\end{align}
	$x_+ =x\vee 0$ and the constant $M_1$ is from Theorem \ref{th1}.
	The next theorem establishes the coverage and size properties of the confidence ball 
	$B(\hat{\theta},\tilde{R}_M)$. 
	\begin{theorem}
		\label{th4}
		Let Conditions \eqref{cond_nonnormal}, \eqref{cond_technical} and \eqref{cond_A3} be fulfilled 
		and $\tilde{R}^2_M$ be defined by \eqref{radius2}. Then for any $M\ge 0$
		\begin{align*}
			\sup_{\theta\in\mathbb{R}^n} \mathrm{P}_{\theta}
			\big(\theta\notin B(\hat{\theta},\tilde{R}_M)\big) 
			&\le \psi_1(M/4)+\psi_2(M)+H_1e^{-m_1M},\\
			\sup_{\theta\in\mathbb{R}^n}  \mathrm{P}_{\theta}\big(\tilde{R}_M^2\ge g_M(\theta,n)\big)
			&\le\psi_1(M/4)+\psi_2(M)+2H_1e^{-m_1M},
		\end{align*}
		$g_M(\theta,n)=M_1r^2(\theta)+M\sigma^2+4\sigma^2 G_M \sqrt{n}$ 
		and the constants $H_1, m_1, M_1$ are defined in Theorem \ref{th1}.
	\end{theorem}
	By taking large enough $M$ we can ensure the coverage and size relations uniformly 
	over the entire space $\mathbb{R}^n$. However, notice the price for this overall uniformity: the radius 
	of the constructed confidence ball is essentially of the order $\sigma n^{1/4}+ r(\theta)$. So, it is always 
	of the order at least  $\sigma n^{1/4}$ even for very sparse parameters $\theta$, 
	and it is of the oracle rate order only for non-sparse parameters, when $r(\theta) \ge C\sigma n^{1/4}$.
	This is a fundamental problem for uncertainty quantification, which typically occurs when the parameter $\theta$ 
	has many smallish coordinates $\theta_i$, say, with $\theta_i^2$  just under the noise level $\sigma^2$. 
	Clearly, in this case no method can reliably assign those coordinates to the significant set. 
	As demonstrated in \cite{Li:1989}, \cite{Cai&Low:2004}, \cite{Baraud:2004} and \cite{Nickl&vandeGeer:2013}, the
	above mentioned price in the form of a big radius is absolutely unavoidable (even in the case of just two sparsity 
	classes as is shown in  \cite{Nickl&vandeGeer:2013}), as soon as we require uniform coverage.

	\subsection{Implications: the minimax results over sparsity classes}
	\label{implication}
	In this subsection we elucidate the potential strength of the local approach.
	In particular, we demonstrate how the global adaptive 
	minimax results over certain scales can be derived from the local results. 
	Note that the oracle rate $r(\theta)$ is a local quantity in that it quantifies the level of accuracy
	of inference about specific $\theta$ and originally it is not linked to any particular scale of  
	classes. However, it is always possible to relate the oracle rate to various scales. 
	Precisely, if we want to establish global adaptive 
	minimax results over certain scale, say, $\{\Theta_\beta, \, \beta\in\mathcal{B}\}$, 
	with corresponding minimax rates $\{r(\Theta_\beta),\beta\in\mathcal{B}\}$ 
	(the minimax rate over $\Theta_\beta$ is $r^2(\Theta_\beta) \triangleq \inf_{\hat{\theta}} 
	\sup_{\theta \in \Theta_\beta}\mathrm{E}_{\theta} \|\hat{\theta}-\theta\|^2$, 
	where the infimum is taken over all estimators), the only thing we need to show is
	\[
	\sup_{\theta\in\Theta_\beta}r^2(\theta)\le 
	c r^2(\Theta_\beta),\quad \text{for all} \;\; \beta\in\mathcal{B}.
	\]
	If the above property holds, we say the oracle rate $r(\theta)$ \emph{covers} the scale 
	$\{\Theta_\beta, \, \beta\in\mathcal{B}\}$. In this case, the local results 
	on the estimation, the posterior contraction and the size relation of 
	the confidence ball will immediately imply the corresponding global adaptive minimax 
	results over the covered scale, (actually, simultaneously for all scales that are covered 
	by the oracle rate $r(\theta)$). 
	As to the coverage property, according to Theorem \ref{th8}, it holds uniformly 
	only over the EBR class $\Theta_{\rm eb}(t)$, whichever scale we consider.
	Thus, specializing the coverage property to a particular scale boils down to  
	intersecting this scale with the EBR class $\Theta_{\rm eb}(t)$ in the 
	coverage property.
	
	Next, we consider two sparsity scales $\{\Theta_\beta, \beta\in\mathcal{B}\}$ 
	for which the adaptive minimax results 
	(on the estimation problem, the contraction rate of the empirical Bayes posterior, and the size property
	of the confidence ball $B(\hat{\theta},(M_2\hat{r}^2+M)^{1/2})$)
	follow from our local results Theorems \ref{th1} and \ref{th8}.  
	The results for other (covered) scales can also be readily derived.

	\paragraph{Nearly black vectors.} For $p_n\in[n]$ such that   
	$ p_n=o(n)$ as $n\to \infty$ 
	(we use the usual $o$, $O$ notation to describe the asymptotic behavior of 
	certain quantities as $n\to \infty$), introduce the sparsity class
	$\ell_0[p_n]=\{\theta\in\mathbb{R}^n: s(\theta)=|I^*(\theta)|\le p_n\}$, where 
	by $I^*(\theta)$ and $s(\theta)$ we denote the active index set and 
	the sparsity of $\theta\in\mathbb{R}^n$:
	\begin{align}
		\label{def_s(theta)}
		I^*(\theta)=\{i\in[n]: \theta_i \not =0\}, \qquad  s(\theta)=|I^*(\theta)|.
	\end{align}

	The minimax estimation rate over the class  of nearly black vectors $\ell_0[p_n]$ with the sparsity parameter 
	$p_n$  is known to be $r^2(\ell_0[p_n])=O\big(\sigma^2p_n\log(\frac{n}{p_n})\big)$ 
	as $n\to\infty$; see \cite{Donoho&Johnstone&Hoch&Stern:1992}. 
	By the definition  \eqref{oracle} of the oracle rate $r^2(\theta)$,
	we have that $r^2(\theta) \le r^2(I^*(\theta),\theta)$. Then 
	we obtain trivially that  
	\[
	\sup_{\theta\in \ell_0[p_n]} r^2(\theta) 
	\le \sup_{\theta\in \ell_0[p_n]} r^2(I^*(\theta),\theta)
	\le \sigma^2 p_n \log\big(\tfrac{en}{p_n}\big)=O\big(r^2(\ell_0[p_n])\big).
	\]
	The last relation, Theorems  \ref{th1} and \ref{th8}  immediately  imply
	the adaptive minimax results for the scale $\ell_0[p_n]$. 
	We summarize these results  in the following corollary.
	\begin{corollary}
		\label{theorem_black_vectors}
		Under the conditions of Theorem \ref{th8}, we have for any $M\ge 0$
		\begin{align*}
			&\sup_{\theta\in\ell_0[p_n] }\mathrm{E}_{\theta} 
			\hat{\pi}\big( \|\vartheta-\theta\|^2\ge M_0 \sigma^2 p_n\log(\tfrac{en}{p_n}) +M\sigma^2 | X\big)  \le 
			H_0 e^{-m_0M}, \\
			&\sup_{\theta\in \ell_0[p_n]}\mathrm{P}_{\theta} 
			\big(\|\hat{\theta}-\theta\|^2 \ge M_1 \sigma^2 p_n\log(\tfrac{en}{p_n})+M\sigma^2\big)
			\le H_1 e^{-m_1 M},\\
			&\sup_{\theta\in \ell_0[p_n]}
			\mathrm{P}_{\theta}\big(\hat{r}^2\ge M'_0\sigma^2p_n \log(\tfrac{en}{p_n}) +(M+1)\sigma^2\big)\le
			H'_0 e^{-M}.
		\end{align*}
	\end{corollary}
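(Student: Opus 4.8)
The plan is to obtain all three displays of Corollary~\ref{theorem_black_vectors} directly from Theorems~\ref{th1} and~\ref{th8}, the only real content being the single \emph{covering inequality}
\[
\sup_{\theta\in\ell_0[p_n]} r^2(\theta)\le \sigma^2 p_n\log\big(\tfrac{en}{p_n}\big).
\]
Given this, each display follows because the event it features, which uses the larger \emph{deterministic} threshold $M_j\sigma^2 p_n\log(en/p_n)+M\sigma^2$ in place of $M_j r^2(\theta)+M\sigma^2$, is contained in the corresponding event from Theorem~\ref{th1} or~\ref{th8}; hence its posterior mass (resp.\ its probability) is no larger, and the exponential bound carries over verbatim, uniformly over $\ell_0[p_n]$ since the constants do not depend on $\theta$.

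To prove the covering inequality, fix $\theta\in\ell_0[p_n]$ and use the active index set $I^*(\theta)=\{i\in\mathbb{N}_n:\theta_i\neq 0\}$ as a competitor in the oracle minimization~\eqref{oracle}. By definition of $\ell_0[p_n]$ we have $|I^*(\theta)|=s(\theta)\le p_n$, and $\sum_{i\in (I^*(\theta))^c}\theta_i^2=0$. Hence, by~\eqref{oracle_I} and~\eqref{oracle},
\[
r^2(\theta)\le r^2\big(I^*(\theta),\theta\big)=\sigma^2|I^*(\theta)|\log\big(\tfrac{en}{|I^*(\theta)|}\big)\le \sigma^2 p_n\log\big(\tfrac{en}{p_n}\big),
\]
where the last step uses that $x\mapsto x\log(en/x)$ is increasing on $(0,n]$ and $|I^*(\theta)|\le p_n\le n$. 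Taking the supremum over $\ell_0[p_n]$ gives the claim; as recalled in the text and proved in \cite{Donoho&Johnstone&Hoch&Stern:1992}, this matches the minimax estimation rate $r^2(\ell_0[p_n])=O(\sigma^2 p_n\log(n/p_n))$ up to a constant, so the corollary in fact delivers the adaptive minimax upper bounds for all three problems.

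Now I substitute. For the first display, Theorem~\ref{th1}(i) gives $\mathrm{E}_\theta\hat\pi(\|\vartheta-\theta\|^2\ge M_0 r^2(\theta)+M\sigma^2\mid X)\le H_0 e^{-m_0 M}$ for every $\theta$ and $M\ge 0$; for $\theta\in\ell_0[p_n]$ the covering inequality yields $M_0 r^2(\theta)+M\sigma^2\le M_0\sigma^2 p_n\log(en/p_n)+M\sigma^2$, so the event with the larger threshold is a subset of the one with threshold $M_0 r^2(\theta)+M\sigma^2$; monotonicity of the probability measure $\hat\pi(\cdot\mid X)$ and then of $\mathrm{E}_\theta$ give the bound uniformly over $\ell_0[p_n]$. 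The second display follows identically from Theorem~\ref{th1}(ii) applied to $\hat\theta$, and the third from the size relation $\mathrm{P}_\theta(\hat r^2\ge M_3 r^2(\theta)+M\sigma^2)\le H_3 e^{-m_3 M}$ of Theorem~\ref{th8}, again replacing $r^2(\theta)$ by its upper bound inside the event. The constants are exactly those produced by Theorems~\ref{th1} and~\ref{th8}.

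There is essentially no obstacle: the corollary is a pure specialization of the local (oracle) results, and the only genuine computation is the one-line monotonicity estimate $r^2(I^*(\theta),\theta)\le\sigma^2 p_n\log(en/p_n)$. The single point deserving a word of care is the bookkeeping of the admissible ranges of $M$ inherited from the source theorems (namely $M\ge 0$ for the posterior contraction statement, $M\ge 1$ for the estimator, $M\ge 2$ for the radius), which should be stated alongside the corollary.
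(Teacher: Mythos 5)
Your proof is correct and follows essentially the same route as the paper: establish the covering inequality $\sup_{\theta\in\ell_0[p_n]} r^2(\theta)\le \sigma^2 p_n\log(en/p_n)$ by using $I^*(\theta)$ as a competitor in the oracle minimization and the monotonicity of $x\mapsto x\log(en/x)$, then plug this bound into the events of Theorems~\ref{th1} and~\ref{th8}. Your added remark on the inherited ranges of $M$ ($M\ge 0$, $M\ge 1$, $M\ge 2$ respectively) is a correct and useful piece of bookkeeping that the paper leaves implicit.
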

	
	
	The next assertion describes some ``over-dimensionality''  
	(or ``undersmoothing'') control of the empirical Bayes posterior 
	$\hat{\pi}(I|X)$  from the $\mathrm{P}_{\theta}$-perspective. 
	\begin{theorem}
		\label{cor1_dimension}
		Let $s(\theta)$ be defined by \eqref{def_s(theta)}.
		Under the conditions of Theorem \ref{th2}, there exist $M_4, m_4>0$ such 
		that  for any $M>M_4$ and $\theta\in\mathbb{R}^n$   
		\[
		\mathrm{E}_{\theta}\hat{\pi}(I: |I|>M s(\theta)|X)
		\le
		C_0\exp\big\{-m_4s(\theta)\big[(M-M_4) \log(\tfrac{en}{s(\theta)}) -M\log M\big]\big\}.
		\]
		In particular, there exist constants $M'_4, m'_4>0$ such that
		\[
		\mathrm{E}_{\theta}\hat{\pi}(I: |I|>M'_4 s(\theta)|X)
		\le
		C_0\exp\big\{-m'_4s(\theta) \log(\tfrac{en}{s(\theta)})\big\}.
		\]
	\end{theorem}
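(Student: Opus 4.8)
The plan is to obtain this as a corollary of Theorem~\ref{th2}. Throughout write $s=s(\theta)$ and, for $x\in[0,n]$, set $g(x)=x\log(en/x)$ (with $g(0)=0$); recall that $g$ is increasing on $[0,n]$, since $g'(x)=\log(n/x)\ge 0$, and that it satisfies the identity $g(Ms)=Mg(s)-Ms\log M$. Two elementary observations link the event $\{I\in\mathcal{I}:\,|I|>Ms\}$ with a level set of $r(\cdot,\theta)$. First, by \eqref{oracle_I} with $\tau=1$, keeping only the variance part,
\[
r^2(I,\theta)\ \ge\ \sigma^2|I|\log(en/|I|)\ =\ \sigma^2 g(|I|),\qquad I\in\mathcal{I}.
\]
Second, plugging $I=I^*(\theta)$ into the minimum \eqref{oracle} and using $\theta_i=0$ for $i\notin I^*(\theta)$,
\[
r^2(\theta)\ \le\ r^2\big(I^*(\theta),\theta\big)\ =\ \sigma^2 s\log(en/s)\ =\ \sigma^2 g(s).
\]

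First I would dispose of the degenerate cases. If $s=0$ then $\theta=0$ and $\hat{\pi}(I:\,|I|>0\,|\,X)\le 1\le C_0$, while the right-hand side of the claimed bound equals $C_0$; if $Ms>n$ the set $\{I:\,|I|>Ms\}$ is empty. So assume $1\le s$ and $Ms\le n$, put $M_4\triangleq c_3$ and
\[
\widetilde M\ \triangleq\ g(Ms)-M_4\,g(s)\ =\ s\big[(M-M_4)\log(en/s)-M\log M\big].
\]
If $\widetilde M<0$ the claim again holds trivially, since its right-hand side then exceeds $C_0>1$; so assume $\widetilde M\ge 0$. On $\{I:\,|I|>Ms\}$, monotonicity of $g$ together with the two displays above gives
\[
\sigma^{-2}r^2(I,\theta)\ \ge\ g(|I|)\ \ge\ g(Ms)\ =\ M_4\,g(s)+\widetilde M\ \ge\ c_3\,\sigma^{-2}r^2(\theta)+\widetilde M ,
\]
hence $\{I:\,|I|>Ms\}\subseteq\{I:\,r^2(I,\theta)\ge c_3\,r^2(\theta)+\widetilde M\sigma^2\}$. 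Applying Theorem~\ref{th2} with the nonnegative constant $\widetilde M$ in the role of $M$,
\[
\mathrm{E}_\theta\hat{\pi}\big(I:\,|I|>Ms\,|\,X\big)\ \le\ \mathrm{E}_\theta\hat{\pi}\big(I:\,r^2(I,\theta)\ge c_3\,r^2(\theta)+\widetilde M\sigma^2\,|\,X\big)\ \le\ C_0 e^{-c_2\widetilde M},
\]
which is precisely the first assertion with $M_4=c_3$ and $m_4=c_2$.

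For the ``in particular'' statement I would choose $M_4'$ to be a sufficiently large absolute constant, namely so large that $M_4'\ge e$ and $M_4'/(1+\log M_4')\ge c_3+1$. If $M_4's>n$ the event is empty; otherwise $s\le n/M_4'$, so $\log(en/s)\ge\log(eM_4')=1+\log M_4'$, and therefore
\[
\frac{g(M_4's)}{g(s)}\ =\ M_4'\Big(1-\frac{\log M_4'}{\log(en/s)}\Big)\ \ge\ \frac{M_4'}{1+\log M_4'}\ \ge\ c_3+1 ,
\]
so that $g(M_4's)-c_3\,g(s)\ge g(s)=s\log(en/s)$. Plugging $M=M_4'$ into the first assertion then yields $\mathrm{E}_\theta\hat{\pi}(I:\,|I|>M_4's\,|\,X)\le C_0\exp\{-c_2\,s\log(en/s)\}$, i.e.\ $m_4'=c_2$.

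The only step needing a little foresight — and, such as it is, the ``main obstacle'' — is the reduction itself: one must discard the bias term $\sum_{i\in I^c}\theta_i^2$ from $r^2(I,\theta)$ and bound $r^2(\theta)$ by the crude choice $I=I^*(\theta)$. Both are lossy, but exactly in the manner that turns the exponential bound $C_0 e^{-c_2 M}$ of Theorem~\ref{th2} into the stated factor $\exp\{-m_4 s(\theta)[(M-M_4)\log(en/s(\theta))-M\log M]\}$, via the identity $g(Ms)=Mg(s)-Ms\log M$. Everything else is bookkeeping, together with the two degenerate cases $s(\theta)=0$ and $Ms(\theta)>n$.
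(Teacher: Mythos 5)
Your proof is correct and follows essentially the same route as the paper: discard the bias part of $r^2(I,\theta)$, bound $r^2(\theta)$ from above by $\sigma^2 s(\theta)\log(en/s(\theta))$ via $I^*(\theta)$, use monotonicity of $x\mapsto x\log(en/x)$ together with the identity $g(Ms)=Mg(s)-Ms\log M$ to reduce the event $\{|I|>Ms(\theta)\}$ to the level set appearing in Theorem~\ref{th2}, and then read off the bound with $M_4=c_3$, $m_4=c_2$. Your treatment is, if anything, slightly more careful than the paper's (you explicitly dispose of the cases $s(\theta)=0$, $Ms(\theta)>n$, and $\widetilde M<0$, and your derivation of the ``in particular'' part by plugging $M=M_4'$ into the first claim is cleaner than the paper's split by $s\lessgtr en/(M')^2$), but the key idea and the resulting constants are the same.
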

	The above assertion is a local type result, but can readily be specialized to the sparsity 
	class $\theta\in\ell_0[s_n]$ in the minimax sense.  
	If $s(\theta) \ge 1$, the probability bound goes to $0$ as $n\to \infty$.

	\paragraph{Weak $\ell_s$-balls.}
	For $s\in(0,2)$, the \emph{weak $\ell_s$-ball} with the sparsity parameter $p_n$ is defined by
	\[
	m_s[p_n]=\big\{\theta\in\mathbb{R}^n: 
	\theta^2_{[i]}\le(p_n/n)^2(n/i)^{2/s}, \, i\in\mathbb{N}_n\big\}, \quad 
	p_n=o(\sigma n) \;\; \text{as} \;\; n\to \infty,
	\] 
	where $\theta_{[1]}^2\ge\ldots\ge \theta_{[n]}^2$ are the ordered 
	$\theta_1^2,\ldots, \theta_n^2$. This scale can be thought of as Sobolev hyper-rectangle
	for ordered (with unknown locations) coordinates:
	$m_s[p_n]=\mathcal{H}(\beta,\delta_n)=
	\{\theta\in\mathbb{R}^n: |\theta_{[i]}| \le \delta_n i^{-\beta}\}$, with 
	$\delta_n =n^{1/s} \tfrac{p_n}{n}$ and $\beta=1/s > 1/2$.

	Denote $j=O_\theta(i)$ if $\theta_i^2=\theta_{[j]}^2$, with 
	the convention that in the case $\theta_{i_1}^2=\ldots =\theta_{i_k}^2$ for $i_1 < \ldots < i_k$ 
	we let $O_\theta(i_{l+1})=O_\theta(i_l)+1$, $l=1,\ldots, k-1$. 
	The minimax estimation rate  over this class is 
	$r^2(m_s[p_n])=n(\tfrac{p_n}{n})^s[\sigma^2\log(\tfrac{n\sigma}{p_n})]^{1-s/2}$ 
	when $n^{2/s}(\tfrac{p_n}{n})^2 \ge \sigma^2 \log n$, and $r^2(m_s[p_n])
	=n^{2/s}(\tfrac{p_n}{n})^2 +\sigma^2$ when $n^{2/s}(\tfrac{p_n}{n})^2<\sigma^2 \log n$,
	as $n\to \infty$; see \cite{Donoho&Johnstone:1994b} and  \cite{Birge&Massart:2001}. 
	Then take $I^*(\theta)=\{i \in \mathbb{N}_n: O_\theta(i) \le p^*_n\}$,
	with $p_n^*=en (\frac{p_n}{n\sigma})^s[\log(\frac{n\sigma}{p_n})]^{-s/2}$ 
	in the case $n^{2/s}(\tfrac{p_n}{n})^2\ge\sigma^2 \log n$, to derive  
	\begin{align}
		\label{weak_balls_minimax_risk}
		&\sup_{\theta\in m_s[p_n]}  r^2(\theta)\le \sup_{\theta\in m_s[p_n]} r^2(I^*(\theta),\theta)
		\le \sigma^2 p_n^*\log(\tfrac{en}{p^*_n})+n^{2/s} (\tfrac{p_n}{n})^2\sum_{i>p_n^*} i^{-2/s} 
		\notag\\
		&\le K_1\sigma^2p_n^*\log(\tfrac{n\sigma}{p_n})+K_2 n^{2/s} (\tfrac{p_n}{n})^2 (p^*_n)^{1-2/s} 
		\le Kn (\tfrac{p_n}{n})^s\big[\sigma^2 \log (\tfrac{n\sigma}{p_n})\big]^{1-s/2}
		=O\big(r^2(m_s[p_n])\big),  
	\end{align}
	for some $K=K(s)$.
	The case $n^{2/s}(\tfrac{p_n}{n})^2<\sigma^2 \log n$ is treated similarly by taking
	$p_n^*=0$. 
	
	Theorems  \ref{th1} and \ref{th8}  
	imply the minimax adaptive results for the scale $m_s[p_n]$.
	\begin{corollary}
		\label{theorem_weak_balls}
		Under the conditions of Theorem \ref{th8}, we have for any $M \ge 0$
		\begin{align*}
			&\sup_{\theta\in m_s[p_n]}\mathrm{E}_{\theta} 
			\hat{\pi}\big( \|\vartheta-\theta\|^2\ge M_0 K r^2(m_s[p_n])+M\sigma^2|X\big)
			\le H_0e^{-m_0M},\\
			&\sup_{\theta\in m_s[p_n]}\mathrm{P}_{\theta}\| 
			\big(\hat{\theta}-\theta\|^2
			\ge M_1K r^2(m_s[p_n])+M\sigma^2\big) \le H_1e^{-m_1M}, \\
			&\sup_{\theta\in m_s[p_n]}
			\mathrm{P}_{\theta} \big(\hat{r}^2\ge M'_0 K r^2(m_s[p_n])+(M+1)\sigma^2
			\big)\le H'_0 e^{-M}.
		\end{align*}
	\end{corollary}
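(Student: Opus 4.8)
The plan is to reduce Corollary \ref{theorem_weak_balls} entirely to Theorems \ref{th1} and \ref{th8} via the covering inequality \eqref{weak_balls_minimax_risk}, since that inequality does all the genuine work. First I would fix $s\in(0,2)$ and $p_n$ with $p_n = o(\sigma n)$, and recall from \eqref{weak_balls_minimax_risk} that there is a constant $K = K(s)$ with $\sup_{\theta \in m_s[p_n]} r^2(\theta) \le K\, r^2(m_s[p_n])$, where $r^2(\theta)$ is the oracle rate from \eqref{oracle} with $\tau = 1$. The point is that every bound in Theorems \ref{th1} and \ref{th8} is stated uniformly over all $\theta \in \mathbb{R}^n$ with the oracle rate $r^2(\theta)$ appearing on the right-hand side, so on the subset $m_s[p_n]$ we may simply replace $r^2(\theta)$ by its supremum $K r^2(m_s[p_n])$ and enlarge the multiplicative constants accordingly.

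Concretely, for the posterior-contraction claim I would start from \eqref{th1_i}: for each $\theta \in \mathbb{R}^n$ and $M \ge 0$,
\[
\mathrm{E}_\theta \hat\pi\big(\|\vartheta - \theta\|^2 \ge M_0 r^2(\theta) + M\sigma^2 \mid X\big) \le H_0 e^{-m_0 M}.
\]
Since for $\theta \in m_s[p_n]$ we have $M_0 r^2(\theta) \le M_0 K r^2(m_s[p_n])$, the event $\{\|\vartheta - \theta\|^2 \ge M_0 K r^2(m_s[p_n]) + M\sigma^2\}$ is contained in the event $\{\|\vartheta - \theta\|^2 \ge M_0 r^2(\theta) + M\sigma^2\}$, so monotonicity of $\hat\pi(\cdot\mid X)$ and of $\mathrm{E}_\theta$ gives the first display of the corollary after taking $\sup_{\theta \in m_s[p_n]}$. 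The estimation claim follows identically from \eqref{th1_ii} (for $M \ge 1$; for $0 \le M < 1$ one absorbs the difference into the constant or simply notes that the bound is then trivial after adjusting $H_1$), and the radius claim follows identically from the size relation $\mathrm{P}_\theta(\hat r^2 \ge M_3 r^2(\theta) + M\sigma^2) \le H_3 e^{-m_3 M}$ in Theorem \ref{th8}, again using $M_3 r^2(\theta) \le M_3 K r^2(m_s[p_n])$ on $m_s[p_n]$.

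The only non-routine ingredient is \eqref{weak_balls_minimax_risk} itself, which is already established in the excerpt; the main thing to be careful about there is the choice of the test index set $I^*(\theta) = \{i : O_\theta(i) \le p_n^*\}$ with $p_n^* = en(p_n/(n\sigma))^s[\log(n\sigma/p_n)]^{-s/2}$ in the regime $n^{2/s}(p_n/n)^2 \ge \sigma^2\log n$ (and $p_n^* = 0$ otherwise), together with the tail estimate $\sum_{i > p_n^*} i^{-2/s} \asymp (p_n^*)^{1-2/s}$ valid since $2/s > 1$, and matching the two terms $\sigma^2 p_n^* \log(en/p_n^*)$ and $n^{2/s}(p_n/n)^2 (p_n^*)^{1-2/s}$ against the stated minimax rate $n(p_n/n)^s[\sigma^2\log(n\sigma/p_n)]^{1-s/2}$. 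Once \eqref{weak_balls_minimax_risk} is in hand, no obstacle remains: the corollary is a one-line consequence of the uniformity of Theorems \ref{th1} and \ref{th8}. I would therefore present the proof as: invoke \eqref{weak_balls_minimax_risk}, then apply the three uniform bounds with $r^2(\theta)$ replaced by $K r^2(m_s[p_n])$, adjusting constants.
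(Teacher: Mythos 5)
Your proposal is correct and matches the paper's own (terse) argument exactly: the paper establishes the covering inequality \eqref{weak_balls_minimax_risk}, namely $\sup_{\theta\in m_s[p_n]} r^2(\theta) \le K\, r^2(m_s[p_n])$, and then declares the corollary an immediate consequence of the uniform-in-$\theta$ bounds in Theorems \ref{th1} and \ref{th8}, precisely the monotonicity/event-containment reduction you spell out. Your side remark about the $M\ge 1$ restriction in \eqref{th1_ii} is a legitimate (if minor) point that the paper glosses over silently.
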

	
	The following claim concerns the ``over-dimensionality'' control for the  class $m_s[p_n]$.
	\begin{theorem}
		\label{cor2_dimension}
		Let the conditions of Theorem \ref{th2} be fulfilled, 
		$p_n^*=en(\tfrac{p_n}{n\sigma})^s \big[\log(\tfrac{n\sigma}{p_n})\big]^{-s/2}$ 
		and $n^{2/s}(\tfrac{p_n}{n})^2 \ge \sigma^2 \log n$.
		Then there exist constants $M_5,m_5>0$ such that for any $M>M_5$ 
		there exists $n_0=n_0(M,s)$ such that, for all $n \ge n_0$,  
		\[
		\sup_{\theta\in m_s[p_n]} \mathrm{E}_{\theta}\hat{\pi}(I: |I|>Mp_n^*|X)
		\le C_0 \exp\big\{-m_5(M-M_5)p_n^*\log (\tfrac{n\sigma}{p_n})\big\}.
		\]
	\end{theorem}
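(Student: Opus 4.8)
The plan is to deduce the claim from Theorem~\ref{th2}, using as a comparison index set the set $I^*(\theta)$ of the $\lceil p_n^*\rceil$ largest coordinates of $\theta$ that already appears in the derivation \eqref{weak_balls_minimax_risk}. That computation shows, uniformly in $\theta\in m_s[p_n]$ and in the standing regime $n^{2/s}(\tfrac{p_n}{n})^2\ge\sigma^2\log n$,
\[
r^2(\theta)\le r^2\big(I^*(\theta),\theta\big)\le K n\big(\tfrac{p_n}{n}\big)^s\big[\sigma^2\log(\tfrac{n\sigma}{p_n})\big]^{1-s/2}=\tfrac{K}{e}\,\sigma^2 p_n^*\log(\tfrac{n\sigma}{p_n}),
\]
the last equality being a direct rearrangement of the definition of $p_n^*$; set $K_1'=K/e$.

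Next I would convert the event $\{I:\,|I|>Mp_n^*\}$ into one controlled by Theorem~\ref{th2}. Since $p_n=o(\sigma n)$ forces $p_n^*=o(n)$ and $\log(\tfrac{n\sigma}{p_n})\to\infty$, there is $n_0=n_0(M,s)$ such that for all $n\ge n_0$ one has $Mp_n^*\le n$ and, on taking logarithms in the definition of $p_n^*$,
\[
\log\big(\tfrac{en}{Mp_n^*}\big)=s\log(\tfrac{n\sigma}{p_n})+\tfrac{s}{2}\log\log(\tfrac{n\sigma}{p_n})-\log M\ \ge\ \tfrac{s}{2}\log(\tfrac{n\sigma}{p_n}).
\]
Because $x\mapsto x\log(\tfrac{en}{x})$ is increasing on $(0,n]$, every $I$ with $Mp_n^*<|I|\le n$ satisfies
\[
r^2(I,\theta)\ \ge\ \sigma^2|I|\log\big(\tfrac{en}{|I|}\big)\ >\ \sigma^2 Mp_n^*\log\big(\tfrac{en}{Mp_n^*}\big)\ \ge\ \tfrac{Ms}{2}\,\sigma^2 p_n^*\log(\tfrac{n\sigma}{p_n}).
\]
Combining this with the bound on $r^2(\theta)$ and setting $M_5=2c_3K_1'/s$ (with $c_3$ the constant of Theorem~\ref{th2}), for every $M>M_5$, every $\theta\in m_s[p_n]$ and every $n\ge n_0$ we get the inclusion
\[
\{I:\,|I|>Mp_n^*\}\ \subseteq\ \big\{I:\ r^2(I,\theta)\ \ge\ c_3 r^2(\theta)+M'\sigma^2\big\},\qquad M'\triangleq\tfrac{s}{2}(M-M_5)\,p_n^*\log(\tfrac{n\sigma}{p_n})>0.
\]

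Finally I would invoke Theorem~\ref{th2} in the same $r^2$-form in which it yields Theorem~\ref{cor1_dimension}, namely $\mathrm{E}_\theta\hat\pi(I:\,r^2(I,\theta)\ge c_3 r^2(\theta)+M'\sigma^2\,|\,X)\le C_0 e^{-c_2 M'}$, whence
\[
\sup_{\theta\in m_s[p_n]}\mathrm{E}_\theta\hat\pi\big(I:\,|I|>Mp_n^*\,\big|\,X\big)\ \le\ C_0 e^{-c_2 M'}=C_0\exp\big\{-\tfrac{c_2 s}{2}(M-M_5)\,p_n^*\log(\tfrac{n\sigma}{p_n})\big\},
\]
which is the assertion with $m_5=c_2 s/2$.

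The only genuinely delicate part is the asymptotic bookkeeping that pins down $n_0(M,s)$ and the constant $K_1'$: one must verify $p_n^*=o(n)$ and that the residual $\tfrac{s}{2}\log\log(\tfrac{n\sigma}{p_n})$ absorbs $\log M$ for $n$ large, and that $p_n^*\log(\tfrac{n\sigma}{p_n})\to\infty$ so the bound is non-trivial; all of these follow from $p_n=o(\sigma n)$ together with the regime hypothesis $n^{2/s}(\tfrac{p_n}{n})^2\ge\sigma^2\log n$. Everything else is the already-established estimate \eqref{weak_balls_minimax_risk}, the monotonicity of $x\log(\tfrac{en}{x})$, and Theorem~\ref{th2}.
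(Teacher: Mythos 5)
Your proof is correct and follows essentially the same route as the paper's: bound $r^2(\theta)$ via \eqref{weak_balls_minimax_risk}, lower-bound $r^2(I,\theta)$ for $|I|>Mp_n^*$ using monotonicity of $x\log(en/x)$ and the asymptotic expansion of $\log(en/Mp_n^*)$, then invoke Theorem~\ref{th2} (correctly, in its $r^2$-form). The only differences are cosmetic bookkeeping choices (you retain $\tfrac{s}{2}\log(\cdot)$ instead of $s\log(\cdot)$, yielding $M_5=2c_3K/(se)$ and $m_5=c_2s/2$ rather than the paper's $M_5=c_3K/(se)$ and $m_5=c_2s$), which is immaterial since the theorem only asserts existence of such constants.
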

	Notice that the exponential upper bound from the last relation converges to zero as $n\to \infty$
	because $p_n^*\log (\tfrac{n\sigma}{p_n}) \ge e (\sigma^2\log n)^{s/2} (\log (\tfrac{n\sigma}{p_n})^{1-s/2}$.
	
	\begin{remark}
		The same minimax results hold over the so called \emph{strong $\ell_s$-ball}
		$\ell_s[p_n]=\{\theta\in\mathbb{R}^n:\, \tfrac{1}{n} \sum_{i=1}^n |\theta_i|^s \le (\tfrac{p_n}{n})^s \}$, 
		$s \in (0,2)$, since $\ell_s[p_n] \subseteq  m_s[p_n] \subseteq \ell_{s'}[p_n]$ for any $s'>s$. 
	\end{remark}
	
	\section{Concluding remarks and EBR} 
	\label{sec_discussion_and_EBR}
	
	\subsection{Concluding remarks}
	\label{subsec_duscussion}
	\paragraph{Improving constants.} 
	Since our approach applies to a very general situation, many constants involved in 
	the conditions and proofs may be rather conservative. Indeed, we do not specify 
	any distribution of $\xi$ and even dot not assume independence of its coordinates. 
	For the problem to be at all solvable, the vector $\xi$ has to have some minimal structure 
	which is in our case provided by Condition \eqref{cond_nonnormal}. 
	The constants $\beta,B$ reflect in a generic way 
	how bad (or how good) the vector $\xi$ is,
	implying that almost all the constants in the proofs and conditions depend on $\beta,B$. 
	Clearly, if a distribution of $\xi$ is specified, many bounds can made more precise 
	and many constants can be improved, including the constants $\bar{\varkappa}$ and 
	$\tau_0$ from Conditions \eqref{cond_technical} and (A3), 
	see Remark \ref{rem5} for more on constant $\tau_0$.
	Besides, some constants can be improved by using more precise inequalities 
	at some steps of the proof. But this would make the presentation significantly 
	lengthier without adding anything new conceptually.


	For example, in case $\xi_i \overset{\rm ind}{\sim} \mathrm{N}(0,1)$, we can sharpen 
	up many constants in the proofs and conditions. In the proof of  Lemma \ref{lemma1}, 
	we can compute exactly the right hand side of \eqref{relation_P2} by using the 
	elementary identity: for $Y\sim \mathrm{N}(\mu_y,\sigma_y^2)$,
	\begin{align}
		\label{elem_inequality_1}
		\mathrm{E}\exp\big\{\tfrac{aY^2}{2}\big\}=
		\exp\big\{\tfrac{a\mu_y^2}{2(1-a\sigma_y^2)}-\tfrac{1}{2}\log(1-a\sigma_y^2)\big\},
		\quad \text{for any} \;\; a<\sigma_y^{-2}.
	\end{align}
	By some tedious but straightforward calculations, we obtain the claim of 
	Lemma~\ref{lemma1} for any $h \in [0,1)$ with the constants
	$A_h=\tfrac{h}{2(1+h)}$, $B_h=\tfrac{h}{2(1-h)}$, $C_h=\tfrac{h}{2}$ and 
	$D_h=\tfrac{h}{2}+\tfrac{1}{2}\log (1-h)$. If $I\backslash I_0 = \varnothing$, the bound 
	holds also for $h=1$ with $A_1=\frac{1}{4}$, $B_1=0$, $C_1= D_1=\frac{1}{2}$.
	Next, since Lemma \ref{lemma1} now holds for any $h \in [0,1)$, we can try to optimize 
	the choice of $h$ in Lemma \ref{lemma2}. 
	We can also relax the requirement $c_1 >2$ to $c_1>1$ in Lemma  \ref{lemma2}, leading 
	to the bound for $\varkappa \ge \bar{\varkappa}= 2.04$.

	The constants in the proof of Theorem \ref{th2} can also be improved 
	in the normal case
	and we can use the bound 
	$\mathrm{E}\big(\sum_{i\in I} \xi_i^2 \big)^2= |I|^2+2|I| \le 3|I|^2$
	instead of \eqref{moment_bound} in the proof of Theorem \ref{th1}.

	\paragraph{Product prior.}
	\label{subsec_product_prior}
	If, instead of the prior $\pi$, we take a prior $\bar\pi=\bar\pi _{K,\varkappa}= \sum_{I \in\mathcal{I}} \lambda_I \pi_I$ 
	with $\tau^2_i(I) =K\sigma^2\mathbbm{1}\{i\in I\}$ for any fixed $K>0$ (we can even allow 
	$K=K_n \to\infty$, but $K_n=O(n)$, as $n\to \infty$) in \eqref{gen_norm_prior} and 
	$\lambda_I= c_{\varkappa,n} \exp\{-\varkappa |I| \log n\}$ (with $\varkappa>\varkappa_0$ for 
	some $ \varkappa_0>0$) in \eqref{prior_lambda}, then all the results will hold with 
	$\log n$ instead of $\log(\tfrac{en}{|I|})$ in the oracle rate \eqref{oracle}. 
	This case was studied in the first version of the arXiv-preprint of 
	this paper. Thus, the results for the prior $\bar\pi$ are weaker than the results 
	obtained in this paper. For example, the minimax rates for the sparsity classes 
	(Corollaries \ref{theorem_black_vectors}, \ref{theorem_weak_balls}) 
	follow from these weaker results only if the sparsity parameter 
	$p_n = O(n^\gamma)$ for $\gamma\in[0,1)$ as $n\to \infty$, 
	otherwise we obtain only the \emph{near-minimax} rates, with the factor $\log n$ 
	instead of $\log(\tfrac{n}{p_n})$.
	
	However, there is an advantageous feature of the prior $\bar{\pi}$ as compared with $\pi$. 
	Namely, it is of the product structure: for $\lambda_I =c_\lambda \prod_{i\in I} \lambda_i$ 
	with $c_\lambda= \prod_{i=1}^n (1+\lambda_i)^{-1}$, we compute
	$\bar{\pi}=\sum_{I \in \mathcal{I}} \lambda_I \pi_I = 
	\bigotimes_{i=1}^n \big[\omega_i \mathrm{N}(\mu_{1,i},K\sigma^2)+ 
	(1-\omega_i)\delta_0\big]$,
	$\omega_i =\frac{\lambda_i}{1+\lambda_i}$
	($\omega_i= \lambda(i\in I)$   
	is the prior probability that the random set $I$ contains $i$).
	This leads to the product structure of  the empirical Bayes posterior,  so that
	the computation of the corresponding empirical Bayes estimator can easily be done 
	in the coordinatewise fashion. Indeed, in our case $\lambda_i=\lambda=n^{-\varkappa}$  
	and some computations give the following  empirical Bayes posterior
	\[
	\bar{\pi}(\vartheta|X) =\bigotimes_{i=1}^n \big[p_i\mathrm{N}\big(X_i, \tfrac{K \sigma^2}{K+1}\big) 
	+(1-p_i) \delta_0\big], \qquad p_i= 
	1/\big[1+h\exp\{-\tfrac{X_i^2}{2\sigma^2}\}\big],
	\]
	where $p_i= \bar{\pi}(\theta_i \not = 0|X)$ and $h=h_{\varkappa,K} =\tfrac{\sqrt{K+1}}{\lambda} = 
	n^\varkappa(K+1)^{1/2}$.
	The mean  with respect to $\bar{\pi}(\vartheta|X)$ is readily obtained:
	$\bar{\theta}=\mathrm{E}_{\bar{\pi}}(\vartheta|X) =(p_iX_i, \, i\in[n])$,  
	a shrinkage estimator with easily computable shrinkage factors $p_i$.
	Coordinatewise empirical Bayes medians can also be easily computed.

	\paragraph{Cardinality dependent prior $\lambda$.}
	\label{subsec_C_vdV}
	Notice that the prior $\lambda=(\lambda_I,\, I \in \mathcal{I})$ defined by \eqref{prior_lambda}
	depends  on the set $I \in\mathcal{I}$ only via its cardinality $|I|$, i.e.,
	$\lambda_I= g(|I|)$ for some nonnegative function $g(k)$, $k=0,1,\ldots, n$. 
	It is easy to see that in this case $\pi_n(k)=g(k)\binom{n}{k}$, $k=0,1,\ldots, n$, 
	determines the prior on the cardinality of $I$.
	Hence, the prior $\lambda_I$ can always be modeled in two steps: first draw 
	the random cardinality 
	$K$ according to the prior $\pi_n(k)$, and then given $K=k$, draw a random set $\mathcal{I}$ 
	uniformly from the family of all subsets of $\mathcal{I}$ of cardinality $k$.
	Such priors $\lambda$ are used in \cite{Castillo&vanderVaart:2012},  
	where the cardinality prior $\pi_n(k)$ can be taken to be a so called  ``complexity prior'' 
	$\pi_n(k)=\exp\{-ak \log(bn/k)\} $ for some $a,b>0$. Since 
	$e^{k \log(n/k)} \le \binom{n}{k} \le e^{k\log(ne/k)}$, the resulting prior mass
	$\lambda_I$ on $I$ is bounded below and above by expressions of the type 
	$\exp\{-a_1|I| \log(b_1n/|I|)\}$, resembling the prior \eqref{prior_lambda}.
	The condition on the complexity prior from \cite{Castillo&vanderVaart:2012}
	essentially corresponds to our condition $\varkappa>\bar{\varkappa}$ 
	for some $\bar{\kappa}>0$ (Condition \eqref{cond_technical}).

	\paragraph{Computing the estimators.}
	\label{subsec_comp_est}
	Note that the estimator \eqref{estimator} is a shrinkage estimator, and 
	the estimator \eqref{emp_emp_posterior} is a hard thresholding procedure. 
	Indeed, the estimator \eqref{estimator} is
	$\tilde{\theta}_i=p_i X_i$ where $p_i =\sum_{I:\, i \in I} \tilde{\pi}(I|X)$,
	and the estimator \eqref{emp_emp_posterior} is
	$\check{\theta}_i=X_i 1\big\{|X_i| \ge \check{t}\big\}$, where
	$\check{t}=|X_{[\check{k}]}|$, $|X_{[1]}|\ge \ldots \ge |X_{[n]}|$,
	and $\check{k}$ is the minimizer of 
	$\sum_{i=k+1}^n X^2_{[i]} + (2\varkappa+1)\sigma^2 k \log(en/k)$. 
	
	The thresholding procedure is easy to implement, whereas the values  $p_i$ in 
	the shrinkage procedure are more difficult to compute. It is demonstrated 
	in \cite{Castillo&vanderVaart:2012} how one can use the partial product structure 
	(in the model and in $\pi_I$, but not in $\lambda_I$) 
	to facilitate the computation of $p_i$'s. Other estimators can be considered, 
	for example, the coordinatewise median with respect to $\tilde{\pi}$, 
	which is going to be something in between shrinkage and thresholding. 
	
	\paragraph{Condition \eqref{cond_nonnormal}.}
	\label{subsec_condition_A1}
	First we mention that all the results still hold, if, instead of  
	Condition \eqref{cond_nonnormal}, we assume  the weaker condition:
	$\mathrm{E} \exp\{\beta\sum_{i\in I} \xi_i^2\} \le C_\beta e^{B |I| \log (en/|I|)}$ for all 
	$I\in  \mathcal{I}$ and some $\beta\in (0,1]$, $B,C_\beta>0$. However, 
	we leave Condition \eqref{cond_nonnormal} in its present form
	to provide a cleaner mathematical exposition. 
	
	It is interesting to relate Condition \eqref{cond_nonnormal} to the so called 
	\emph{sub-gaussianity} condition on the error vector $\xi=(\xi_i, i \in[n])$. 
	The  random vector $\xi$ is called 
	\emph{sub-gaussian} with parameter $\rho >0$ if 
	\begin{align}
		\label{subgaussian}
		\mathrm{P}( |\langle v, \xi \rangle|> t) \le e^{-\rho t^2}\quad 
		\text{for all}\;\; t\ge 0\;\; \text{and all}\;\;  v\in\mathbb{R}^n \;\;  \text{such that} \;\;  \|v\| =1.
	\end{align}
	The sub-gaussianity condition \eqref{subgaussian} and Condition \eqref{cond_nonnormal} 
	are close, but in general incomparable. For example, let $\xi_i = \xi_0$, $i\in[n]$, 
	for some bounded random variable $\xi_0$ (say, uniform on $[-1,1]$), then Condition (A1) trivially holds 
	whereas the sub-gaussianity condition is not fulfilled.
	If the $\xi_i$'s are independent, then the sub-gaussianity 
	condition is equivalent to Condition \eqref{cond_nonnormal}. 
	
	

	In a way, Condition \eqref{cond_nonnormal} prevents too much dependence, but 
	it still allows some interesting cases of dependent $\xi_i$'s.  
	Suppose that the $\xi_i$'s follow an autoregressive model AR(1) with normal white noise: 
	\[
	\xi_k=\alpha \xi_{k-1} +\epsilon_k, \quad
	\epsilon_k\overset{\rm ind}{\sim} \mathrm{N}(0,1), \quad k\in[n]; 
	\quad \xi_0=0,\quad |\alpha|<1.
	\] 
	Let us show that Condition \eqref{cond_nonnormal} 
	is fulfilled for the vector $\xi=(\xi_i, i \in[n])$.
	We have that for any $k>l$, $\xi_k= \alpha^{k-l} \xi_{l}+\alpha^{k-l-1}
	\epsilon_{l+1} +\ldots +\epsilon_{k} = \alpha^{k-l} \xi_l + Z_{k-l}$,
	where $Z_{k'} \sim \mathrm{N}(0, \sigma^2_{k'})$ with 
	$ \sigma^2_{k'} = 1+\alpha^2 +\ldots +  \alpha^{2(k'-1)}\le \tfrac{1}{1-\alpha^2} 
	\triangleq \sigma_0^2$. Clearly, for any $I\in \mathcal{I}$, there are 
	$1\le k_1<k_2 <\ldots <k_{|I|}\le n$ such that 
	$\sum_{i\in I} \xi^2_i = \sum_{i=1}^{|I|} \xi^2_{k_i} $. 
	Denote $\mathcal{F}_m= \sigma(\xi_{k_i}, 1\le i \le m)$, $m\in[|I|]$, the 
	$\sigma$-algebra generated by $\{\xi_{k_i}, 1\le i \le m\}$.
	Choose $\beta$ and $\alpha$ in such a way that 
	$0<\tfrac{2\beta \alpha^2}{1-4\beta \sigma^2_0}\le \beta$.
	By using the elementary identity \eqref{elem_inequality_1}, we first evaluate
	the conditional expectation 
	\begin{align*}
		&\mathrm{E}  \big(e^{\beta(\xi^2_{k_{m-1}}+\xi^2_{k_m})} | \mathcal{F}_{m-1}\big)
		\le
		e^{\beta\xi^2_{k_{m-1}}}\mathrm{E}\big( e^{2\beta \xi^2_{k_m}}| \mathcal{F}_{m-1}\big) \\
		&=\exp\big\{\big(\beta+\tfrac{2\beta \alpha^{2(k_m - k_{m-1})}}{1-4\beta \sigma^2_{k_m,k_{m-1}}}
		\big)\xi^2_{k_{m-1}}-\tfrac{1}{2}\log(1-4\beta \sigma^2_{k_m,k_{m-1}}) \big\}
		\le (1-4\beta \sigma_0^2)^{-1/2} e^{2\beta\xi^2_{k_{m-1}}}.
	\end{align*}
	Iterating the above conditional expectation argument, we establish  Condition \eqref{cond_nonnormal}:
	\begin{align*}
		\mathrm{E}\exp\Big\{\beta\sum_{i\in I} \xi_i^2\Big\}&=
		\mathrm{E} \mathrm{E}\Big[ \exp\Big\{\beta\sum_{i\in I} \xi_i^2\Big\}\Big| \mathcal{F}_{|I|-1}\Big]=
		(1-4\beta \sigma_0^2)^{-1/2} 
		\mathrm{E} \Big[\exp\big\{\beta\sum_{i=1}^{|I|-2} \xi^2_{k_i} \big\} 
		e^{2\beta\xi^2_{k_{|I|-1}}}\Big] \\
		&\le \ldots \le
		(1-4\beta \sigma_0^2)^{-|I|/2} = e^{B|I|}, \qquad \text{with} 
		\quad B=\log(1-4\beta \sigma_0^2)^{-1/2}.
	\end{align*}

	\paragraph{Relation to paper \cite{vanderPas&Szabo&vanderVaart:2017}.}
	When the present paper was under review, paper \cite{vanderPas&Szabo&vanderVaart:2017}
	on the same topic appeared (with discussion, see also our contribution 
	\cite{Belitser&Nurushev:2017} to this discussion). 
	The main result of \cite{vanderPas&Szabo&vanderVaart:2017} is the adaptivity 
	of the confidence set constructed by the Bayesian approach  over 
	the sparsity scale of nearly black vectors (introduced in Section \ref{implication}) 
	within a grand space $\ell_0[p_n]$ for some $p_n\to \infty$, $p_n=o(n)$ as $n\to \infty$, 
	under the EBR condition. 
	The EBR condition introduced in 
	\cite{vanderPas&Szabo&vanderVaart:2017} is essentially a version of 
	our EBR condition adopted to the sparsity scale within the grand space $\ell_0[p_n]$. 
	It is not difficult to see that, within the asymptotic framework $n\to \infty$ and restricting 
	the values of $\theta$ to some grand space $\ell_0[p_n]$ with $p_n=o(n)$, 
	the EBR condition introduced in \cite{vanderPas&Szabo&vanderVaart:2017} is actually equivalent to 
	our EBR condition specified to that embedded sparsity scale with 
	appropriate choices of the constants involved.
	
	Restricting the values of $\theta$ to some grand space $\ell_0[p_n]$ excludes some 
	``almost sparse'' parameters  that are formally non-sparse (with many very small, 
	but nonzero, entries), but this is in fact necessary to ensure the asymptotic regime $n\to\infty$
	considered in \cite{vanderPas&Szabo&vanderVaart:2017}.
	The main differences of our approach and that of \cite{vanderPas&Szabo&vanderVaart:2017} are 
	the following. We obtain local results without relating to any sparsity scale,
	e.g., the true parameter $\theta$ may be not $\ell_0[p_n]$-sparse at all.
	For example, as a consequence we derive the results not only for $\ell_0[p_n]$,
	but also for other sparsity scales, such as \emph{weak} $\ell_s$-\emph
	{balls} $m_s[p_n]$.
	Next, we allow the error vector $\xi$ to be non-normal and even not necessarily 
	independent (but just satisfying Condition (A1)). Some of our constants in the proofs and conditions 
	may be more conservative, which is not surprising since we pursue a more general situation. 
	Finally, we derive non-asymptotic exponential concentration
	bounds, which give a refined characterization of the quality of coverage and size relation
	results (finer, than, e.g., Theorem $5$ from \cite{vanderPas&Szabo&vanderVaart:2017}, which is asymptotic
	in $n\to\infty$) and allow subtle analysis for various asymptotic regimes.
	
	We should mention that the derivation of our somewhat stronger results relies
	on certain explicit posterior expressions resulting from our choice of prior
	(mixture of normals, although the model is not assumed to be normal),
	whereas the horseshoe prior studied in \cite{vanderPas&Szabo&vanderVaart:2017} 
	leads to only implicit posterior quantities so that the authors had to overcome difficult  
	technical issues in the proofs.

	\subsection{The EBR condition}
	\label{subsec_ebr}
	\paragraph{A new perspective on EBR -- the EBR scale.}
	As mentioned in the introduction, it is impossible 
	to construct optimal (fully) adaptive confidence set in the minimax sense 
	over traditional smoothness and sparsity scales with a prescribed high coverage probability.  
	Namely, there exist ``deceptive'' parameters $\theta \in 
	\Theta'_0=\mathbb{R}^n \backslash \Theta_0$ for which the coverage property in \eqref{defconfball}
	may not hold for arbitrarily small $\alpha_1$. 
	Removing deceptive parameters $\Theta'_0$ and restricting to 
	the remaining set $\Theta_0$ of non-deceptive parameters resolves this issue. 
	This was the original motivation of introducing the EBR condition.

	An interesting additional feature of the EBR condition is that it leads to 
	\emph{slicing of the entire parameter space} $\mathbb{R}^n$. This opens up a new 
	perspective on the EBR and its role in the deceptiveness issue, which we explain next. 
	

	Note that the EBR condition $\theta \in \Theta_{\mathrm{eb}}(t,\tau)$ (see \eqref{cond_ebr}) is actually a family of 
	embedded conditions parametrized by $t\ge 0$: $\Theta_{\mathrm{eb}}(t_1,\tau) 
	\subseteq \Theta_{\mathrm{eb}}(t_2,\tau)$ for $t_1\le t_2$ and any $\tau\ge 0$.
	Note that, by the oracle definition, 
	$\Theta_{\mathrm{eb}}(\tau n,\tau) = \mathbb{R}^n$ for any $\tau>0$.
	An important observation is that this family of conditions effectively introduces a new scale
	$\cup_{t\ge 0} \Theta_{\mathrm{eb}}(t,\tau)=\cup_{0\le t\le \tau n} \Theta_{\mathrm{eb}}(t,\tau)$  (for any fixed $\tau>0$), to be called 
	the \emph{EBR scale}, with the structural  parameter $t\ge 0$ measuring 
	the allowed amount of deceptiveness for parameters $\theta\in \Theta_{\mathrm{eb}}(t,\tau)$. 
	Indeed, this scale  ``slices'' $\mathbb{R}^n$ in the sense that $\mathbb{R}^n=
	\cup_{0\le t\le \tau n} \Theta_{\mathrm{eb}}(t,\tau)$. The main benefit of introducing the EBR scale 
	is that it gives the slicing of the entire space that is very suitable for uncertainty quantification.
	Indeed, the dictum ``removing deceptive parameters'' becomes a very natural notion in terms of the scale $\cup_{0\le t\le \tau n}\Theta_{\mathrm{eb}}(t,\tau)$ 
	as it is nothing else but restricting the amount of deceptiveness $t$. This provides a new perspective 
	at the above mentioned ``deceptiveness'' issue: basically, each parameter $\theta\in\mathbb{R}^n$ has 
	a certain amount of ``deceptiveness''  that is measured by the excessive bias ratio $b_\tau(\theta)$ defined 
	by \eqref{def_b}, or the smallest $t$ for which $\theta\in  \Theta_{\mathrm{eb}}(t,\tau)$. The larger $t$, 
	the more deceptive parameters are allowed in $\Theta_{\mathrm{eb}}(t,\tau)$.
	A mild and controllable price for the uniformity over $\Theta_{\mathrm{eb}}(t,\tau)$  in the coverage 
	relation is the amount of inflating of the confidence ball needed  to provide a guaranteed high coverage
	for the parameters of deceptiveness at most $t$. 
	We should mention that the EBR scale is intrinsically tied to our Bayesian procedure 
	as it depends on the proposed family of priors $\{\pi_I, I\in\mathcal{I}\}$. 
	A different family may lead to a different EBR scale.
	Note however that a version of our EBR condition (adapted to the sparsity scale) 
	is still used for a different (horseshoe) prior in paper \cite{vanderPas&Szabo&vanderVaart:2017} .   
	
	Interestingly, slicing is also possible by the parameter $\tau>0$: $\mathbb{R}^n =\cup_{\tau\ge 0} \Theta_{\mathrm{eb}}(t,\tau)$ (for any $t>0$), the embedding goes in the opposite direction: the smaller the $\tau$, the weaker the EBR. 
	Namely, $\Theta_{\mathrm{eb}}(t,\tau_2)\subseteq \Theta_{\mathrm{eb}}(t,\tau_1)$ for 
	any $0\le \tau_1\le \tau_2$, $t>0$,  and the ``limiting'' EBR set $\lim_{\tau \downarrow 0} 
	\Theta_{\mathrm{eb}}(t,\tau)$ expands to the entire space: $\Theta_{\mathrm{eb}}(t,0)=\mathbb{R}^n$. 
	Besides, notice that the inflating factor in the confidence ball from Theorem \ref{th8} will not not increase 
	as $\tau \downarrow 0$ (in fact, it will decrease). A paradox seems to have emerged:  
	by considering very small $\tau$'s, we can have less deceptiveness without any price in the coverage relation. However, this paradox is resolved by reminding that the coverage relation from 
	Theorem \ref{th8} does not hold for arbitrarily small $\tau_0$ because in \eqref{def_I_*}
	$\tau_0>\tfrac{1+\varrho}{1-\varrho} \bar{\tau}$, showing that ``there is no free lunch''.  
	The lower bound for $\tau_0$ can be relaxed (made smaller) for specific distribution of $\xi$, 
	but it will always be some positive threshold reflecting the complexity of $\xi$.

	\paragraph{The EBR does not affect the minimaxity over the sparsity scale $\ell_0[p]$.}
	The EBR condition is mild from the minimax point of view in the following sense: 
	if we take the traditional sparsity  class $\ell_0[p] = \{\theta \in \mathbb{R}^n: |I^*(\theta)|
	=\|\theta\|_0 \le p\}$ for $p \in \mathbb{N}_n$ and remove non-EBR parameters, then
	the minimax rate over the remaining part will not change (up to a constant).  We outline the argument below. 
	The minimax estimation rate was established by Birg\'e and Massart \cite{Birge&Massart:2001} 
	(Theorem 4 from \cite{Birge&Massart:2001}, formulated in our notation): for some universal constant $c>0$,
	\[
	r^2(\ell_0[p]) \triangleq \inf_{\hat{\theta}} \sup_{\theta \in \ell_0[p]} \mathrm{E}_{\theta} \|\hat{\theta}-\theta\|^2  
	\ge c \sigma^2p \log(en/p).  
	\]
	The proof is based on considering the subset $\mathcal{B}_1(p)
	= \{ \theta \in \mathbb{R}^n: |I^*(\theta)|\le p, \, |\theta_i| \le \sigma^2\log(en/p)\} \subset \ell_0[p]$ 
	and establishing the required lower bound for the minimax risk $R(\mathcal{B}_1(p))$ 
	over the set $\mathcal{B}_1(p)$, thus obtaining $r^2(\ell_0[p]) \ge r^2(\mathcal{B}_1(p))\ge c \sigma^2p\log(en/p)$. 
	Inspecting all the steps in the proof, we see that essentially the same lower bound 
	(with a different constant $c$) holds for another subset of $\ell_0[p]$: 
	$\mathcal{B}_2(p,\tau_0)= \{ \theta \in \mathbb{R}^n: |I^*(\theta)|
	=p, \, 2\tau_0 \sigma^2\log(en/p)\le |\theta_i| \le (2\tau_0+1)\sigma^2\log(en/p)\;
	\mbox{for all}\; i \in I^*(\theta)\}$, for $\tau_0$ defined in \eqref{def_I_*}.
	For each $\theta \in \mathcal{B}_2(p,\tau_0)$, we have $I_*(\theta)=I_o^{\tau_0}(\theta) = I^*(\theta)$ 
	so that $I_*^c(\theta)=\varnothing$, $|I_*(\theta)|=p$, and the EBR condition is trivially satisfied for any $t\ge 0$:
	\[
	\frac{\sum_{i\in I_*^c} \theta_i^2}{\sigma^2(1+|I_*|\log(en/|I_*|))}=0\le t .
	\] 
	This means that $r^2(\ell_0[p]\cap \Theta_{\mathrm{eb}}(t)) 
	\ge r^2(\mathcal{B}_2(p,\tau_0))\ge c \sigma^2p\log(en/p)$.

	\section{Simulations}
	\label{simulations}
	
	Here we present a small simulation study according to the model \eqref{model} with 
	$\xi_i\overset{\rm ind}{\sim}\mathrm{N}(0,1)$, $\sigma=1$ and $n=500$. 
	We used signals $\theta=(\theta_{1},\ldots,\theta_{n})$ 
	of the form $\theta=(0,\ldots,0, A,\ldots,A)$, where $p=\#(\theta_{0,i}\neq 0)$ 
	last coordinates of $\theta$ are equal  to a fixed number $A$. 
	Different sparsity levels $p_n\in\{25,50,100\}$ and ``signal strengths'' 
	$A\in\{3,4,5\}$ are considered.  In case $\xi_i \overset{\rm ind}{\sim} \mathrm{N}(0,1)$, 
	the Conditions \eqref{cond_nonnormal} and \eqref{cond_technical} are fulfilled with 
	$\beta=0.4, B=1$ and $\varkappa>3.24$. 
	The idea is to construct an empirical counterpart of the ball $B\big(\hat{\theta},[b(\theta)+1)M_2\hat{r}^2]^{1/2}\big)$ which appears in Remark \ref{rem4} (for $M=0$).
	We consider $\hat{I}$ and $\check{\theta}$ 
	defined respectively by \eqref{I_MAP} and \eqref{emp_emp_posterior}, 
	and take $\hat{b}=
	\frac{\sum_{i\not\in \hat{I}} (X_i^2-1)}{|\hat{I}|\log(en/|\hat{I}|)}$ as the estimator 
	of $b(\theta)$ defined by \eqref{def_b}. 
	
	
	When computing $\hat{I}$ given by \eqref{I_MAP}, an important choice is that 
	of parameter $\varkappa>0$.
	In our simulation study, we choose $\varkappa$ via a \emph{cross-validation} procedure. 
	For that, we create two independent  normal  samples $X'_i=X_i+\eta_i$ and  $X''_i=X_i-\eta_i$, 
	where simulated independent standard normal $\eta_i$'s are assumed to be 
	independent of $\xi_i$, $i\in[n]$. 
	Then $X'_i$ and $X''_i$ are independent random variables with
	means $\theta_i$ and variances $2$. In words, the observation sample can be duplicated at the cost
	of multiplying the variance by $2$. Using these samples, we estimate $\varkappa>0$ as follows: let $\check{\theta}'=\check{\theta}'(\hat{I}')=(X'_i\mathrm{1}\{i\in \hat{I}'\}, i\in[n])$, where 
	$\hat{I}'(\varkappa)=\arg\!\min_{I\in\mathcal{I}}\Big\{-\sum_{i \in I}(X_i')^2 +2(2\varkappa +1)
	|I| \log\big(\tfrac{en}{|I|}\big)\Big\}$, then $\hat{\varkappa}
	=\arg\!\min_{\varkappa\in (0,\log n]}\|\hat{I}'(\varkappa)-X''\|^2$. 
	Now, let $\hat{I}=\hat{I}(\hat{\varkappa})$ be defined by \eqref{I_MAP} 
	with $\varkappa=\hat{\varkappa}$. 
	Finally, consider the confidence ball 
	$B\big(\check{\theta}, [\check{M}(\hat{b}+1)\hat{r}^2]^{1/2}\big)$  
	around $\check{\theta}$ defined by \eqref{emp_emp_posterior}
	where  $\hat{b}=\frac{\sum_{i\in \hat{I}^c} (X_i^2-1)}{|\hat{I}|\log(en/|\hat{I}|)}$ 
	and  $\hat{r}^2=|\hat{I}|\log(en/|\hat{I}|)$ given by \eqref{check_r}. 
	Recall that this confidence ball construction is inspired by local result formulation from Remark \ref{rem4}.
	The multiplicative factor $\check{M}$ is intended to trade-off the size of the ball 
	against its coverage probability. 
	We take $\check{M}=2$, an intuitive justification for this choice would be the fact that  
	we doubled the variance in the randomization procedure when duplicating the sample.
	For each sparsity level $p\in\{25,50,100\}$ and signal strength
	$A\in\{3,4,5\}$, we simulated $100$ data vectors $X$ of dimension $n=500$ 
	from the model \eqref{model} and computed the average squared radius 
	by $\overline{\hat{R}^2}$. Table $\ref{table:2}$ shows the ratio of $\overline{\hat{R}^2}$ 
	to the oracle  radial rate $r^2(\theta)=|I_o|\log (en/|I_o|)=p\log (en/p)$, 
	where the oracle $I_o$  is defined by \eqref{oracle}, and the frequency $\bar{\alpha}$ 
	of the event that confidence ball $B\big(\check{\theta},[\check{M}(\hat{b}+1)\hat{r}^2]^{1/2}\big)$ 
	contains the signal $\theta$, respectively. 
	The former quantity estimates the average inflating factor with respect to the oracle rate, 
	and the latter estimates the coverage of the constructed confidence ball.
	We see that the most difficult case is $p=25$ and $A=3$. An informal interpretation of the simulation results for this case: we need to blow up the confidence ball of the 
	oracle rate radius approximately by the factor $3.24$ in order to cover 
	a few 
	small 
	needles in a haystack with coverage probability approximately $0.92$.
	

	\begin{table*}[ht]
		\centering
		\caption[l]{The ratio $\overline{\hat{R}^2}/r^2(\theta)$
			and the frequency $\bar{\alpha}$ of the event that the confidence 
			ball $B\big(\check{\theta},[\check{M}(\hat{b}+1)\hat{r}^2]^{1/2}\big)$ contains the  signal 
			$\theta=(0,\ldots,0,A,\ldots,A)$ (where $p$ last coordinates are equal to $A$) 
			computed for $100$ vectors $X$ simulated from 
			\eqref{model} with $n=500$, $\sigma=1$.}
		\label{table:2}
		\begin{tabular}{ccccrcccrccc} 
			\hline
			\addlinespace[0.5ex]
			$p$ &  & $\mathbf{25}$ &   & & & $\mathbf{50}$ &  & &  & $\mathbf{100}$ &  \\
			\addlinespace[-0.5ex]
			\cmidrule{2-4} \cmidrule{6-8} \cmidrule{10-12}
			$A$ & $\mathbf{3}$ & $\mathbf{4}$ & $\mathbf{5}$ & & $\mathbf{3}$ & $\mathbf{4}$ 
			& $\mathbf{5}$ & & $\mathbf{3}$ & $\mathbf{4}$ & $\mathbf{5}$ \\
			\hline
			\addlinespace[0.5ex]
			$\overline{\hat{R}^2}/r^2(\theta)$ & 3.24 &2.73 & 2.21 & 
			& 3.24 & 2.14 & 2.07 & & 2.26 & 1.83 & 1.85\\
			$\bar{\alpha}$& $0.92$ & $0.97$  & $0.98$  & & $0.99$  & $0.99$  & $1$  & 
			& $0.96$ & $1$  & $1$ \\
			\hline
		\end{tabular}
	\end{table*}

	\section{Technical lemmas}
	\label{proofs_lemmas}
	First we provide a couple of technical lemmas used in the proofs of the main results.
	\begin{remark} 
		\label{rem1a}
		Notice that in the below lemma we established the same bound for the both quantities
		$\mathrm{E}_{\theta}\hat{\pi}(I|X)=\mathrm{E}_{\theta}\tilde{\pi}(I|X)$ and 
		$\mathrm{E}_{\theta}\mathbbm{1}\{\hat{I}=I\}=\mathrm{P}_{\theta}(\hat{I}= I)$.
		The proofs of the properties of $\check{\pi}(\vartheta|X)$ and $\check{\theta}$ 
		are exactly in the same as for $\tilde{\pi}(\vartheta|X)$ 
		and $\tilde{\theta}$, with the only difference that 
		everywhere (in the claims and in the proofs)
		$\hat{\pi}(I \in \mathcal{G}|X)$ should be read as 
		$\tilde{\pi}(I \in \mathcal{G}|X)$ in case $\hat{\pi}=\tilde{\pi}$; 
		and as $\mathbbm{1}\{\hat{I}\in\mathcal{G}\}$ in case $\hat{\pi}=\check{\pi}$, 
		for all $\mathcal{G}\subseteq \mathcal{I}$ that appear in the proof.
		Hence, $\mathrm{E}_{\theta}\hat{\pi}(I\in\mathcal{G}|X)
		=\mathrm{E}_{\theta}\tilde{\pi}(I\in\mathcal{G}|X)$ in the former case,
		and $\mathrm{E}_{\theta}\hat{\pi}(I\in\mathcal{G}|X)
		=\mathrm{P}_{\theta}(\hat{I}\in\mathcal{G})$ in the latter case.
	\end{remark} 
	
	\begin{lemma}
		\label{lemma1}
		Let Condition \eqref{cond_nonnormal} be fulfilled. 
		Then for any $\theta\in\mathbb{R}^n$ and any $I,I_0\in\mathcal{I}$,  
		\begin{align*}
			\mathrm{E}_{\theta}\hat{\pi}(I|X)
			\le\big[\tfrac{\lambda_{I}}{\lambda_{I_0}}\big]^h
			\exp\Big\{B_h\!\!\sum_{i\in I\backslash I_0}\!\!
			\tfrac{\theta_i^2}{\sigma^2} -A_h\!\!\sum_{i\in I_0\backslash I}\!\!
			\tfrac{\theta_i^2}{\sigma^2}+C_h|I_0| \log(\tfrac{en}{|I_0|}) -D_h|I| \log(\tfrac{en}{|I|})\Big\},
		\end{align*}
		where
		$h =\tfrac{2\beta}{3}$, 
		$A_h=\tfrac{\beta}{6}$, $B_h=\tfrac{2\beta}{3}$, $C_h=\tfrac{\beta+B}{3}$ and 
		$D_h =\tfrac{\beta-2B}{3}$. If $I\backslash I_0 = \varnothing$, 
		the bound holds also for $h=\beta$ with $A_h=\frac{\beta}{3}$, $B_h=0$,
		$C_h=\frac{\beta}{2}+B$, $D_h=\frac{\beta}{2}$.
		If $I_0\backslash I=\varnothing$, the bound holds also for $h=\beta$ with 
		$A_h=0$, $B_h=\beta$, $C_h=\tfrac{\beta}{2}$, $D_h=\tfrac{\beta}{2}-B$.
	\end{lemma}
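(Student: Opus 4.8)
The plan is to reduce the bound for either version of $\hat\pi(I|X)$ to a bound for the ratio $\tilde\pi(I|X)/\tilde\pi(I_0|X)$ by a convexity trick, then to evaluate this ratio in closed form using the explicit Gaussian structure of the prior, and finally to control the remaining exponential moment of the noise by Condition \eqref{cond_nonnormal}. First I would note that the denominator in \eqref{emp_P(I|X)} is a finite sum of strictly positive terms, so $\tilde\pi(J|X)>0$ for all $J\in\mathcal{I}$, $\mathrm{P}_\theta$-a.s. If $\hat\pi(I|X)=\tilde\pi(I|X)$, then $\tilde\pi(I|X)\le1$ and, since $\tilde\pi(I_0|X)\le1$, also $\tilde\pi(I|X)\le\tilde\pi(I|X)/\tilde\pi(I_0|X)$; if $\hat\pi(I|X)=\check\pi(I|X)=\mathrm{1}\{\hat I=I\}$, then on $\{\hat I=I\}$ the maximality of $\hat I$ in \eqref{I_MAP} gives $\tilde\pi(I|X)\ge\tilde\pi(I_0|X)$. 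In both cases $\hat\pi(I|X)\le\min\{1,\tilde\pi(I|X)/\tilde\pi(I_0|X)\}\le\big(\tilde\pi(I|X)/\tilde\pi(I_0|X)\big)^h$ for any $h\in(0,1]$, using $\min\{1,x\}\le x^h$ for $x\ge0$, so it suffices to bound $\mathrm{E}_\theta\big(\tilde\pi(I|X)/\tilde\pi(I_0|X)\big)^h$.

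Substituting the prior variances $\sigma^2+\tau_i^2(I)$, equal to $\sigma^2 en/|I|$ for $i\in I$ and to $\sigma^2$ for $i\notin I$, into \eqref{emp_P(I|X)}, the product in the numerator equals $(2\pi\sigma^2)^{-n/2}(en/|I|)^{-|I|/2}\exp\{-\tfrac1{2\sigma^2}\sum_{i\in I^c}X_i^2\}$; the factor $(2\pi\sigma^2)^{-n/2}$ cancels, giving
\[
\frac{\tilde\pi(I|X)}{\tilde\pi(I_0|X)}=\frac{\lambda_I}{\lambda_{I_0}}\Big(\tfrac{en}{|I|}\Big)^{-|I|/2}\Big(\tfrac{en}{|I_0|}\Big)^{|I_0|/2}\exp\Big\{\tfrac1{2\sigma^2}\Big(\sum_{i\in I\setminus I_0}X_i^2-\sum_{i\in I_0\setminus I}X_i^2\Big)\Big\},
\]
where I used $\sum_{i\in I_0^c}X_i^2-\sum_{i\in I^c}X_i^2=\sum_{i\in I\setminus I_0}X_i^2-\sum_{i\in I_0\setminus I}X_i^2$, obtained by splitting $\mathbb{N}_n=(I\cap I_0)\sqcup(I\setminus I_0)\sqcup(I_0\setminus I)\sqcup(I\cup I_0)^c$.

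Now raise this to the power $h=\tfrac{2\beta}3$ and take $\mathrm{E}_\theta$: the deterministic factors contribute $\tfrac\beta3|I_0|\log(\tfrac{en}{|I_0|})-\tfrac\beta3|I|\log(\tfrac{en}{|I|})$ to the exponent, and for the random part, writing $X_i=\theta_i+\sigma\xi_i$ and applying Young's inequality in both directions, $X_i^2\le2\theta_i^2+2\sigma^2\xi_i^2$ on $I\setminus I_0$ and $X_i^2\ge\tfrac12\theta_i^2-\sigma^2\xi_i^2$ on $I_0\setminus I$, produces the $\theta$-coefficients $B_h=\tfrac{2\beta}3$ and $A_h=\tfrac\beta6$ and leaves the factor $\mathrm{E}_\theta\exp\{\tfrac{2\beta}3\sum_{i\in I\setminus I_0}\xi_i^2+\tfrac\beta3\sum_{i\in I_0\setminus I}\xi_i^2\}$. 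By Hölder's inequality with exponents $p=\tfrac32$ and $q=3$ the two noise sums separate and each exponent becomes exactly $\beta$, so \eqref{cond_nonnormal} bounds this factor by $\big(e^{B|I\setminus I_0|}\big)^{2/3}\big(e^{B|I_0\setminus I|}\big)^{1/3}$; using $|I\setminus I_0|\le|I|\le|I|\log(\tfrac{en}{|I|})$ and $|I_0\setminus I|\le|I_0|\le|I_0|\log(\tfrac{en}{|I_0|})$ (valid as $\log(\tfrac{en}k)\ge1$ for $1\le k\le n$) and collecting the logarithmic terms yields $C_h=\tfrac{\beta+B}3$ and $D_h=\tfrac{\beta-2B}3$, which is the claimed inequality.

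For the last assertion, when $I\setminus I_0=\varnothing$ the positive noise sum disappears, no Hölder step is needed, and one can take $h=\beta$ directly; then Young's inequality with constant $\tfrac13$, giving $X_i^2\ge\tfrac23\theta_i^2-2\sigma^2\xi_i^2$ on $I_0\setminus I$, leaves the noise exponent exactly $\beta$ and one reads off $A_\beta=\tfrac\beta3$, $C_\beta=\tfrac\beta2+B$, $D_\beta=\tfrac\beta2$, $B_\beta=0$. The only genuinely delicate point is the constant bookkeeping in the third paragraph: the two Young constants and the Hölder exponents must be chosen simultaneously so that every exponent on a sum $\sum\xi_i^2$ lands exactly at the value $\beta$ where \eqref{cond_nonnormal} is available, while at the same time the $\theta$- and $\log$-coefficients come out as the stated $A_h,B_h,C_h,D_h$; everything else is routine.
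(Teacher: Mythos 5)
Your proof is correct and follows essentially the same route as the paper's: both reduce $\mathrm{E}_\theta\hat\pi(I|X)$ to $\mathrm{E}_\theta\big[\tilde\pi(I|X)/\tilde\pi(I_0|X)\big]^h$, compute the ratio explicitly from the Gaussian prior structure, and control the noise via H\"older's inequality with exponents $(3/2,3)$ together with Condition \eqref{cond_nonnormal}. The cosmetic differences (the pointwise bound $\check\pi(I|X)\le\min\{1,\tilde\pi(I|X)/\tilde\pi(I_0|X)\}\le[\tilde\pi(I|X)/\tilde\pi(I_0|X)]^h$ in place of the paper's Markov-inequality step for $\check\pi$, and applying the elementary $X_i^2$-bounds before H\"older rather than after) do not change the arithmetic or the resulting constants.
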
 
	\begin{proof}[Proof of Lemma \ref{lemma1}]
		Recall that $\mathrm{P}_{X,I} =\phi\big(X_i\mathbbm{1}\{i\not \in I\},0,\sigma^2+
		K_n(I) \sigma^2\mathbbm{1}\{i\in I\}\big)$.
		In case $\hat{\pi}(I|X)=\tilde{\pi}(I|X)$, we get by \eqref{emp_P(I|X)} 
		that, for any $I,I_0\in\mathcal{I}$ and any $h\in [0,1]$,
		\begin{align}
			\mathrm{E}_{\theta}\hat{\pi}(I|X)&=
			\mathrm{E}_{\theta}\tilde{\pi}(I|X)
			=
			\mathrm{E}_{\theta}\frac{\lambda_I \mathrm{P}_{X,I}}
			{\sum_{J\in\mathcal{I}} \lambda_J\mathrm{P}_{X,J}} 
			\le 
			\mathrm{E}_{\theta}\Big(\frac{\lambda_I \mathrm{P}_{X,I}}
			{\lambda_{I_0}\mathrm{P}_{X,I_0}}\Big)^h
			\label{relation_P2} \\  
			\label{relation_P1}
			&= \mathrm{E}_{\theta}\Big[\frac{\lambda_I \prod_{i=1}^n 
				\phi\big(X_i\mathbbm{1}\{i\not \in I\},0,\sigma^2+
				K_n(I) \sigma^2\mathbbm{1}\{i\in I\}\big)}{\lambda_{I_0} \prod_{i=1}^n 
				\phi\big(X_i\mathbbm{1}\{i\not \in I_0\},0,\sigma^2+
				K_n(I_0)\sigma^2\mathbbm{1}\{i\in I_0\}\big)}\Big]^h \notag\\
			&=\big[\tfrac{\lambda_{I}}{\lambda_{I_0}}\big]^h  
			\mathrm{E}_{\theta} \exp\Big\{\tfrac{h}{2}\Big[
			\sum_{i\in I \backslash I_0}\tfrac{X_i^2}{\sigma^2}
			-\sum_{i\in   I_0 \backslash I}\tfrac{X_i^2}{\sigma^2}
			+|I_0| \log(\tfrac{en}{|I_0|})-|I|\log(\tfrac{en}{|I|})\Big]\Big\}.
		\end{align}
		In case $\hat{\pi}(I|X)=\mathbbm{1}\{\hat{I}=I\}$, 
		by the definition \eqref{I_MAP} of $\hat{I}$ 
		and the Markov inequality, we derive that, for any $I,I_0\in\mathcal{I}$ 
		and any $h\ge 0$
		\begin{align*}
			\mathrm{E}_{\theta}\hat{\pi}(I|X) 
			=\mathrm{P}_{\theta}(\hat{I}= I)\le\mathrm{P}_{\theta}\Big(\frac{\tilde{\pi}(I|X)}
			{\tilde{\pi}(I_0|X)}\ge 1\Big)\le 
			\mathrm{E}_{\theta}\Big[\frac{\tilde{\pi}(I|X)}{\tilde{\pi}(I_0|X)}\Big]^h
			=\mathrm{E}_{\theta}\Big(\frac{\lambda_I \mathrm{P}_{X,I}}
			{\lambda_{I_0}\mathrm{P}_{X,I_0}}\Big)^h,
		\end{align*}
		which yields exactly the bound \eqref{relation_P2}, and hence the bound 
		\eqref{relation_P1} again.
		
		Using H\"older's inequality, Condition  \eqref{cond_nonnormal} and the two elementary facts 
		$X_i^2 \le 2\theta_i^2 +2\sigma^2 \xi_i^2$  and $-X_i^2 \le -\tfrac{\theta_i^2}{2} +\sigma^2 \xi_i^2$ , 
		we obtain
		\begin{align*}
			\mathrm{E}_{\theta}\exp\Big\{\tfrac{\beta}{3}\Big[ \sum_{i\in I\backslash I_0} \tfrac{X_i^2}{\sigma^2} 
			-\sum_{i\in I_0\backslash I} \tfrac{X_i^2}{\sigma^2} \Big]  \Big\} 
			&\le 
			\Big(\mathrm{E}_{\theta} e^{\tfrac{\beta}{2} \sum_{i\in I\backslash I_0} \tfrac{X_i^2}{\sigma^2}}\Big)^{2/3}
			\Big(\mathrm{E}_{\theta} e^{-\beta\sum_{i\in I_0\backslash I} \tfrac{X_i^2}{\sigma^2}} \Big)^{1/3}\\
			&\le 
			\exp\Big\{\tfrac{2\beta}{3} \sum_{i\in I\backslash I_0} \tfrac{\theta_i^2}{\sigma^2} + \tfrac{2B}{3}|I\backslash I_0|
			-\tfrac{\beta}{6}\sum_{i\in I_0\backslash I}\tfrac{\theta_i^2}{\sigma^2} +\tfrac{B}{3}|I_0\backslash I|\Big\}.
		\end{align*}
		Since $|I\backslash I_0| \le|I|\le  |I|\log(\tfrac{en}{|I|})$ and 
		$|I_0\backslash I| \le|I_0|\le  |I_0|\log(\tfrac{en}{|I_0|})$, 
		the lemma follows for $h=\frac{2\beta}{3}$ from the last display and \eqref{relation_P1}. 
		
		If  $I\backslash I_0 = \varnothing$, we take $h=\beta$ in \eqref{relation_P1} 
		and combine this with $\mathrm{E}_{\theta}\exp\big\{-\frac{\beta}{2} \sum_{i\in I_0\backslash I} 
		\tfrac{X_i^2}{\sigma^2} \big\} \le \exp\big\{-\frac{\beta}{3} \sum_{i\in I_0\backslash I} 
		\tfrac{\theta_i^2}{\sigma^2} +B |I_0\backslash I|\big\}$,
		which holds in view of Condition  \eqref{cond_nonnormal} and 
		$-\frac{X_i^2}{\sigma^2} \le -\frac{2\theta_i^2}{3\sigma^2} +2\xi_i^2$, 
		as $(a+b)^2\ge 2a^2/3-2b^2$.
		If $I_0\backslash I = \varnothing$, we take $h=\beta$ in \eqref{relation_P1} 
		and combine this  with $\mathrm{E}_{\theta}\exp\big\{\frac{\beta}{2} 
		\sum_{i\in I\backslash I_0}\tfrac{X_i^2}{\sigma^2} \big\}\le\exp\big\{\beta\sum_{i\in I\backslash I_0} 
		\tfrac{\theta_i^2}{\sigma^2} +B |I\backslash I_0|\big\}$ which holds 
		in view of Condition  \eqref{cond_nonnormal} and $\frac{X_i^2}{\sigma^2} \le 
		\frac{2\theta_i^2}{\sigma^2} +2\xi_i^2$.
	\end{proof}

	Note that above lemma holds for any set $I_0 \in\mathcal{I}$. 
	By taking $I_0=I_o$ defined by \eqref{oracle}, we obtain the following lemma.
	\begin{lemma}
		\label{lemma2}
		Let Conditions  \eqref{cond_nonnormal} and \eqref{cond_technical} be fulfilled. 
		Then there exist positive constants 
		$c_1=c_1(\varkappa)>2$, $c_2$ and $c_3=c_3(\varkappa)$ such that
		for any $\theta\in\mathbb{R}^n$
		\begin{align*}
			\mathrm{E}_{\theta}\hat{\pi}(I |X) \le \big(\tfrac{ne}{|I|}\big)^{-c_1 |I|}  
			\exp\big\{-c_2\sigma^{-2}\big[r^2(I,\theta)-c_3r^2(\theta)\big]\big\}. 
		\end{align*}
	\end{lemma}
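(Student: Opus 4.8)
Looking at Lemma 2, the plan is to specialize Lemma 1 by taking $I_0 = I_o$, the oracle set defined by \eqref{oracle}, and then absorb all the resulting terms into the two target expressions $(en/|I|)^{-c_1|I|}$ and $\exp\{-c_2\sigma^{-2}[r^2(I,\theta) - c_3 r^2(\theta)]\}$.

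\begin{proof}[Proof of Lemma \ref{lemma2}]
First I would apply Lemma \ref{lemma1} with $I_0 = I_o$ (the oracle defined by \eqref{oracle}), using the bound with $h = \tfrac{2\beta}{3}$ and the corresponding constants $A_h = \tfrac{\beta}{6}$, $B_h = \tfrac{2\beta}{3}$, $C_h = \tfrac{\beta+B}{3}$, $D_h = \tfrac{\beta-2B}{3}$. Substituting the explicit form \eqref{prior_lambda} of $\lambda_I$ gives $[\lambda_I/\lambda_{I_o}]^h = \exp\{h\varkappa(|I_o|\log(\tfrac{en}{|I_o|}) - |I|\log(\tfrac{en}{|I|}))\}$ (the normalizing constants cancel). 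Collecting the coefficients of $|I|\log(\tfrac{en}{|I|})$ and of $|I_o|\log(\tfrac{en}{|I_o|})$, the exponent becomes
\[
B_h\!\!\sum_{i\in I\setminus I_o}\!\tfrac{\theta_i^2}{\sigma^2} - A_h\!\!\sum_{i\in I_o\setminus I}\!\tfrac{\theta_i^2}{\sigma^2} + (C_h + h\varkappa)|I_o|\log(\tfrac{en}{|I_o|}) - (D_h + h\varkappa)|I|\log(\tfrac{en}{|I|}).
\]
Set $c_1 = D_h + h\varkappa = \tfrac{\beta-2B}{3} + \tfrac{2\beta\varkappa}{3}$; Condition \eqref{cond_technical} is precisely what guarantees $c_1 > 2$.

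The key step is to relate the bias-type sums and the surviving term $(C_h + h\varkappa)|I_o|\log(\tfrac{en}{|I_o|})$ back to $r^2(I,\theta)$ and $r^2(\theta)$. The recipe is: $\sum_{i\in I^c}\theta_i^2 = \sum_{i\in I_o\setminus I}\theta_i^2 + \sum_{i\in I^c\cap I_o^c}\theta_i^2$ and $\sum_{i\in I_o^c}\theta_i^2 = \sum_{i\in I\setminus I_o}\theta_i^2 + \sum_{i\in I^c\cap I_o^c}\theta_i^2$ (wait — more carefully, since $I_o\setminus I$ and $I\setminus I_o$ are disjoint, one writes $\sum_{i\in I\setminus I_o}\theta_i^2 \le \sum_{i\in I_o^c}\theta_i^2 \le r^2(\theta)/1$ because every index in $I\setminus I_o$ lies in $I_o^c$; similarly the negative term $-A_h\sum_{i\in I_o\setminus I}\theta_i^2/\sigma^2$ can simply be dropped, being $\le 0$). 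Thus $B_h\sum_{i\in I\setminus I_o}\theta_i^2/\sigma^2 \le B_h\sigma^{-2}\sum_{i\in I_o^c}\theta_i^2 \le B_h\sigma^{-2}r^2(\theta)$. For the term $(C_h + h\varkappa)|I_o|\log(\tfrac{en}{|I_o|})$, I note $\sigma^2|I_o|\log(\tfrac{en}{|I_o|}) \le r^2(\theta)$ by definition \eqref{oracle}, so this term is $\le (C_h + h\varkappa)\sigma^{-2}r^2(\theta)$. Finally, for the $I$-dependent part, I split $c_1 = c_1' + c_2$ with $c_1' > 2$ and small $c_2 > 0$, keeping $(en/|I|)^{-c_1'|I|}$ as the prefactor and using $c_2|I|\log(\tfrac{en}{|I|}) \cdot \sigma^2 = c_2(r^2(I,\theta) - \sum_{i\in I^c}\theta_i^2) \le$ … — actually cleaner: write $-c_1|I|\log(\tfrac{en}{|I|}) = -c_1'|I|\log(\tfrac{en}{|I|}) - c_2\sigma^{-2}\cdot\sigma^2|I|\log(\tfrac{en}{|I|})$ and note $\sigma^2|I|\log(\tfrac{en}{|I|}) = r^2(I,\theta) - \sum_{i\in I^c}\theta_i^2 \ge r^2(I,\theta) - r^2(I,\theta) $ is not directly bounded below by $r^2(I,\theta)$, so instead I add and subtract: $r^2(I,\theta) = \sum_{i\in I^c}\theta_i^2 + \sigma^2|I|\log(\tfrac{en}{|I|})$, hence $-c_2\sigma^{-2}\sigma^2|I|\log(\tfrac{en}{|I|}) = -c_2\sigma^{-2}r^2(I,\theta) + c_2\sigma^{-2}\sum_{i\in I^c}\theta_i^2$, and the last positive piece is $\le c_2\sigma^{-2}r^2(\theta)$ as shown above.

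Putting the pieces together, all the positive contributions are bounded by a constant multiple of $\sigma^{-2}r^2(\theta)$, which I collect into $c_2 c_3 \sigma^{-2}r^2(\theta)$ by choosing $c_3 = c_3(\varkappa)$ large enough (specifically $c_3 \ge (B_h + C_h + h\varkappa + c_2)/c_2$ or similar), while the negative contribution $-c_2\sigma^{-2}r^2(I,\theta)$ is exactly what is needed, and the prefactor is $(en/|I|)^{-c_1'|I|}$ with $c_1' > 2$; renaming $c_1' \to c_1$ finishes the argument. The main obstacle — really a bookkeeping rather than a conceptual one — is arranging the split of $c_1$ and the choice of $c_3$ so that $c_1 > 2$ survives \emph{after} shaving off the small $c_2$ piece; this is exactly why Condition \eqref{cond_technical} is stated with a strict inequality and a slightly inflated threshold $\bar\varkappa$. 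The case $I\setminus I_o = \varnothing$ can alternatively use the sharper constants ($h = \beta$) from Lemma \ref{lemma1}, but is not needed for the stated bound.
\end{proof}
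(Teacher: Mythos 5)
Your overall plan — apply Lemma \ref{lemma1} with $I_0=I_o$, rewrite $[\lambda_I/\lambda_{I_o}]^h$ using \eqref{prior_lambda}, isolate a prefactor $(en/|I|)^{-c_1|I|}$, and absorb the remaining terms into $-c_2\sigma^{-2}[r^2(I,\theta)-c_3 r^2(\theta)]$ — is exactly the paper's approach. However, there is a genuine gap in the final bookkeeping step. After rewriting $-c_2\sigma^2|I|\log(\tfrac{en}{|I|})=-c_2 r^2(I,\theta)+c_2\sum_{i\in I^c}\theta_i^2$, you claim the residual $c_2\sigma^{-2}\sum_{i\in I^c}\theta_i^2$ is bounded by $c_2\sigma^{-2}r^2(\theta)$ ``as shown above.'' But what you showed above is the (correct) bound $\sum_{i\in I\setminus I_o}\theta_i^2\le\sum_{i\in I_o^c}\theta_i^2\le r^2(\theta)$; the quantity now needed, $\sum_{i\in I^c}\theta_i^2$, is different and in general \emph{not} bounded by $r^2(\theta)$. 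For instance, with $I=\varnothing$ one has $\sum_{i\in I^c}\theta_i^2=\|\theta\|^2$, which is unbounded in $\theta$ while $r^2(\theta)\le\sigma^2 n\log e=\sigma^2 n$ is not. So the final inequality does not hold as written.

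The source of the problem is that you dropped the term $-A_h\sum_{i\in I_o\setminus I}\theta_i^2/\sigma^2$ early on, on the grounds that it is nonpositive. That term is not slack — it is precisely what you need. The correct bookkeeping uses the disjoint decomposition $I^c=(I_o\setminus I)\sqcup(I^c\cap I_o^c)$, so that $\sum_{i\in I^c}\theta_i^2=\sum_{i\in I_o\setminus I}\theta_i^2+\sum_{i\in I^c\cap I_o^c}\theta_i^2$. Keeping the negative term with weight at least $c_2$ (which forces choosing $c_2\le A_h$; the paper simply sets $c_2=A_h$ and therefore defines $c_1=\varkappa h+D_h-A_h$ rather than your $\varkappa h+D_h$), you get $c_2\sum_{i\in I^c}\theta_i^2-A_h\sum_{i\in I_o\setminus I}\theta_i^2\le A_h\sum_{i\in I^c\cap I_o^c}\theta_i^2\le A_h\sum_{i\in I_o^c}\theta_i^2\le A_h\sigma^{-2}r^2(\theta)$, and the argument closes. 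The rest of your computation (the constants $A_h,B_h,C_h,D_h$, the role of Condition \eqref{cond_technical}, the bounds for the $I\setminus I_o$ sum and the $|I_o|\log(\tfrac{en}{|I_o|})$ term) is fine, so this is a single but essential repair.
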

	\begin{proof}[Proof of Lemma \ref{lemma2}]
		With constants $h$, $A_h,B_h,C_h,D_h$ given in Lemma \ref{lemma1}, define 
		the constant $c_1=c_1(\varkappa)=\varkappa h +D_{h}-A_{h} =\tfrac{2\beta\varkappa}{3}+\tfrac{\beta-2B}{3}
		-\frac{\beta}{6} >2$ as $\varkappa>\bar{\varkappa}$ by Condition (A2).  
		Since $\varkappa h+D_{h}=c_1+A_{h}$, the definition \eqref{prior_lambda} 
		of  $\lambda_I$ entails that
		\begin{align*}
			&\big(\tfrac{\lambda_{I}}{\lambda_{I_0}}\big)^{h}
			\exp\big\{C_{h}|I_0| \log(\tfrac{en}{|I_0|})-D_{h}|I| \log(\tfrac{en}{|I|})\big\} \notag\\&
			=(\tfrac{ne}{|I|})^{-c_1 |I|} \exp\big\{(\varkappa h +C_h)|I_0|\log(\tfrac{en}{|I_0|}) - 
			A_h|I| \log(\tfrac{en}{|I|})\big\}.
		\end{align*}
		Using the last relation and Lemma $\ref{lemma1}$ with $I_0=I_o$, 
		we  bound   
		\begin{align*}
			&\mathrm{E}_{\theta}\hat{\pi}(I|X)\le\big[\tfrac{\lambda_{I}}{\lambda_{I_o}}\big]^{h}
			\exp\Big\{B_{h}\sum_{i\in I \backslash I_o} \tfrac{\theta_i^2}{\sigma^2}
			-A_{h}\sum_{i\in I_o \backslash I}\tfrac{\theta_i^2}{\sigma^2}
			+C_{h}|I_o| \log(\tfrac{en}{|I_o|})-D_{h}|I| \log(\tfrac{en}{|I|})\Big\}\\
			&= (\tfrac{ne}{|I|})^{-c_1 |I|} \exp\Big\{\!-\!A_{h}\!\!\!
			\sum_{i\in I_o \backslash I}\tfrac{\theta_i^2}{\sigma^2} -A_{h}|I| \log(\tfrac{en}{|I|})+
			B_{h}\sum_{i\in I \backslash I_o}\!\!\tfrac{\theta_i^2}{\sigma^2}
			+(\varkappa h +C_h)|I_o| \log(\tfrac{en}{|I_o|})\Big\}.
		\end{align*}
		The claim of the lemma follows with the constants  $c_1= (4\beta\varkappa+\beta-4B)/6 >2$, 
		$c_2=A_h=\beta/6$ and $c_3=c_3(\varkappa)=\max\{B_h, \varkappa h +C_h\}/A_h = 
		(\varkappa h +C_h)/A_h = 4\varkappa + 2(\beta+B)/\beta$.
	\end{proof}

	\begin{lemma}
		\label{lemma4}
		Let $Y_1, \ldots, Y_n$ be  some 
		random variables such that, for any $I\in \mathcal{I}$,
		$\mathrm{E}e^{t \sum_{i\in I} Y_i} \le A_{|I|}(t) $ for some $t>0$ and $A_k(t)$. 
		Let $Y_{[1]} \ge Y_{[2]}\ge\ldots \ge Y_{[n]}$.
		Then, for any $k\in\mathbb{N}_n$ and $C,c\ge 0$, 
		\begin{align*}
			& \mathrm{P}\Big( \sum_{i=1}^k Y_{[i]} \ge Ck \log(\tfrac{en}{k})+c\Big) 
			\le A_k(t) \exp\{-(Ct-1)k \log(\tfrac{en}{k})-ct\}, \\
			&\mathrm{E} \sum_{i=1}^k Y_{[i]} \le t^{-1} \big[k \log(\tfrac{en}{k})+\log (A_k(t))\big].
		\end{align*}
		
		In particular, if $\xi_1, \ldots, \xi_n \overset{\rm ind}{\sim} \mathrm{N}(0,1)$,
		then for any $k\in\mathbb{N}_n$, $C,c\ge 0$ 
		\[
		\mathrm{P}\Big( \sum_{i=1}^k \xi_{[i]}^2 \ge Ck \log\big(\tfrac{en}{k}\big)+c\Big) 
		\le \big(\tfrac{en}{k}\big)^{-(0.4C-2)k} e^{-0.4c}, \quad
		\mathrm{E} \sum_{i=1}^k \xi_{[i]}^2 \le 6k\log\big(\tfrac{en}{k}\big).
		\]
	\end{lemma}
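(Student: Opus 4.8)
The plan is to reduce everything to a union bound via the elementary identity $\sum_{i=1}^{k} Y_{[i]} = \max_{I\in\mathcal{I}:\,|I|=k}\sum_{i\in I} Y_i$: the sum of the $k$ largest values equals the largest sum over index sets of cardinality $k$ (this holds for nonnegative $Y_i$, indeed for any signs). Combined with the exponential moment hypothesis and the counting bound $\binom{n}{k}\le(en/k)^k$ already recorded in \eqref{sum_lambda}, this delivers both displayed bounds with essentially no extra work.

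For the tail bound, fix $k$ and $C,c\ge 0$ and put $s=Ck\log(\tfrac{en}{k})+c$. Then
\[
\mathrm{P}\Big(\sum_{i=1}^{k} Y_{[i]}\ge s\Big)=\mathrm{P}\Big(\max_{|I|=k}\sum_{i\in I}Y_i\ge s\Big)\le\sum_{|I|=k}\mathrm{P}\Big(\sum_{i\in I}Y_i\ge s\Big)\le\sum_{|I|=k}e^{-ts}\,\mathrm{E}e^{t\sum_{i\in I}Y_i},
\]
where the last step is Markov's inequality applied to $e^{t\sum_{i\in I}Y_i}$. Each expectation is at most $A_k(t)$ and there are $\binom{n}{k}\le(en/k)^k=e^{k\log(en/k)}$ terms, so the right-hand side is at most $A_k(t)\exp\{k\log(\tfrac{en}{k})-ts\}$; substituting $s$ gives exactly $A_k(t)\exp\{-(Ct-1)k\log(\tfrac{en}{k})-ct\}$.

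For the expectation, apply Jensen's inequality in the form $\mathrm{E}Z\le t^{-1}\log\mathrm{E}e^{tZ}$ to $Z=\sum_{i=1}^{k}Y_{[i]}=\max_{|I|=k}\sum_{i\in I}Y_i$. Since $e^{tZ}\le\sum_{|I|=k}e^{t\sum_{i\in I}Y_i}$, taking expectations gives $\mathrm{E}e^{tZ}\le\binom{n}{k}A_k(t)\le(en/k)^kA_k(t)$, and hence $\mathrm{E}\sum_{i=1}^k Y_{[i]}\le t^{-1}\big[k\log(\tfrac{en}{k})+\log A_k(t)\big]$.

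For the Gaussian specialization, take $Y_i=\xi_i^2$; independence and $\mathrm{E}e^{t\xi_i^2}=(1-2t)^{-1/2}$ for $t<1/2$ give $\mathrm{E}e^{t\sum_{i\in I}\xi_i^2}=(1-2t)^{-|I|/2}$, so one may use $A_k(t)=(1-2t)^{-k/2}$. Choosing $t=2/5$ yields $A_k(2/5)=5^{k/2}$, and since $\tfrac12\log 5<1\le\log(\tfrac{en}{k})$ (because $en/k\ge e$), one gets $\log A_k(2/5)=\tfrac k2\log 5\le k\log(\tfrac{en}{k})$. Feeding this into the two general bounds with $t=0.4$ gives $\mathrm{P}(\sum_{i=1}^k\xi_{[i]}^2\ge Ck\log(\tfrac{en}{k})+c)\le 5^{k/2}\exp\{-(0.4C-1)k\log(\tfrac{en}{k})-0.4c\}\le(\tfrac{en}{k})^{-(0.4C-2)k}e^{-0.4c}$ and $\mathrm{E}\sum_{i=1}^k\xi_{[i]}^2\le\tfrac52\big(k\log(\tfrac{en}{k})+k\log(\tfrac{en}{k})\big)=5k\log(\tfrac{en}{k})\le 6k\log(\tfrac{en}{k})$. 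The argument is routine; the only points needing care are the identification of $\sum_{i=1}^k Y_{[i]}$ with the size-$k$ subset maximum (so that the union bound is legitimate) and the trivial numerical inequalities $\tfrac12\log5\le\log(en/k)$ and $\tfrac52\cdot2\le6$ in the Gaussian case — and one should note that independence is used \emph{only} in that last step, the two general bounds relying on nothing beyond the exponential moment control encoded in $A_k(t)$.
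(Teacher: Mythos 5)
Your proof is correct and follows essentially the same route as the paper's: reduce the ordered partial sum to a maximum (or mixture) over size-$k$ subsets, combine the exponential moment hypothesis with the counting bound $\binom{n}{k}\le(en/k)^k$, and apply Markov/Jensen; the only cosmetic differences are that you apply Markov termwise before the union bound rather than once to the bounded MGF of $\sum_{i=1}^k Y_{[i]}$, and you use $t=2/5$ for the Gaussian expectation bound where the paper uses $t=1/4$ — both choices close the argument.
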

	\begin{proof}
		By Jensen's inequality, we derive  
		\begin{align*}
			&\exp\Big\{t\mathrm{E}\sum_{i=1}^k Y_{[i]}\Big\}
			\le\mathrm{E}\exp\Big\{t\sum_{i=1}^k Y_{[i]} \Big\}
			\le \sum_{I: |I|=k}\mathrm{E}\exp\Big\{t\sum_{i\in I} Y_i\Big\}
			\le \tbinom{n}{k} A_k(t).
		\end{align*}
		Then $\mathrm{E}\exp\big\{t\sum_{i=1}^k Y_{[i]} \big\}\le \tbinom{n}{k} A_k(t)
		\le e^{k\log(\tfrac{en}{k})+\log(A_k(t))}$,
		where we used $\binom{n}{k} \le (\tfrac{en}{k})^k$.
		This and the (exponential) Markov inequality yield the first relation:
		\begin{align*}
			\mathrm{P}\Big( \sum_{i=1}^k Y_{[i]}\ge Ck \log\big(\frac{en}{k}\big)+c\Big) 
			\le A_k(t) \exp\{-(Ct-1)k \log(\tfrac{en}{k})-ct\}.  
		\end{align*}
		The first display  implies also the second relation: $\mathrm{E} \sum_{i=1}^k Y_{[i]}
		\le t^{-1}[\log \binom{n}{k}+\log(A_k(t))]$.
		
		As to the normal case, for any $I \in \mathcal{I}$ and any $t < \frac{1}{2}$ 
		we have that $\mathrm{E}\exp\big\{ t\sum_{i\in I}\xi_i^2\big\}=
		(1-2t)^{-|I|/2}=A_{|I|}(t)$. Since $A_k(t) \le e^k \le e^{k\log(\tfrac{en}{k})}$ for any $t \le(1-e^{-2})/2<0.43$,
		the first assertion for the normal case follows by taking $t=0.4$.
		By taking $t=\tfrac{1}{4}$, the second assertion follows since
		$\mathrm{E}\sum_{i=1}^k \xi^2_{[i]}
		\le 4k\log(\tfrac{en}{k})+2k\log 2 \le 6k\log(\tfrac{en}{k})$.
	\end{proof}
	
	This lemma is useful if $A_k(t) \le C_1(\tfrac{en}{k})^{C_2 k}$ for some $t,C_1,C_2>0$;
	in particular, for $Y_i=\xi^2_i$, where the $\xi_i$'s satisfy Condition \eqref{cond_nonnormal}.
	Then Lemma \ref{lemma4} applies with $t=\beta$ and $A_k(\beta)= e^{Bk}$: 
	\begin{align}
		\label{lem4_rel}
		\mathrm{P}\Big(\sum_{i=1}^{k} \xi_{[i]}^2 \ge \tfrac{(1+B)}{\beta}k \log(\tfrac{en}{k})+ M \Big) 
		\le \exp\{-\beta M\}, \quad k \in \mathbb{N}_n, \; M \ge 0.
	\end{align}
	
	
	\section{Proofs of the theorems}
	\label{proofs_theorems}
	Here we gather the proofs of the theorems. By $C_0, C_1, C_2$ etc., 
	denote constants which are different in different proofs.
	Recall that $Y_{[1]} \ge Y_{[2]}\ge\ldots \ge Y_{[n]}$ denote the ordered $Y_1,\ldots, Y_n$.

	\begin{proof}[Proof of Theorem \ref{th1}] 
		Recall the constants $c_1, c_2, c_3$ defined in the proof of Lemma 
		\ref{lemma2}. Let $M_0=2c_3(6+\tfrac{1+B}{\beta})$. Introduce the subfamily of index sets 
		$\mathcal{S}_M=\mathcal{S}_M(\theta)=\{ I \in \mathcal{I}: r^2(I,\theta) 
		\le c_3 r^2(\theta)+\tfrac{\beta}{40(1+B)}M\sigma^2\}$,  
		$m=m_M(\theta)=\max\{|I|: I \in \mathcal{S}_M\}$,
		and the event $A_M=A(\theta)= \big\{ \sum_{i=1}^m \xi_{[i]}^2 
		\le \tfrac{(1+B)}{\beta}m \log(\tfrac{en}{m}) +\frac{M}{8}\big\}$.  
		We have
		\begin{align*}
			&\hat{\pi}\big(\|\vartheta-\theta\|^2\ge M_0r^2(\theta)+ M\sigma^2 | X\big)
			\le \mathbbm{1}_{A^c_M} +\hat{\pi}(I \in \mathcal{S}_M^c|X) \\
			&\quad + 
			\sum_{I\in\mathcal{S}_M} \mathbbm{1}_{A_M}
			\hat{\pi}_I\big(\|\vartheta-\theta\|^2 
			\ge M_0r^2(\theta)+ M\sigma^2| X\big)\hat{\pi}(I|X)
			=T_1+T_2+T_3.
		\end{align*}
		Now we bound the quantities $\mathrm{E}_{\theta}T_1$, $\mathrm{E}_{\theta}T_2$ 
		and $\mathrm{E}_{\theta}T_3$.
		
		First, we bound $\mathrm{E}_{\theta}T_1$ by using  Lemma \ref{lemma4} (see also \eqref{lem4_rel}):
		\begin{align}
			\mathrm{E}_{\theta} T_1=\mathrm{P}_{\theta}(A^c_M) &= 
			\mathrm{P}\Big(\sum_{i=1}^{m} \xi_{[i]}^2 >  \tfrac{(1+B)}{\beta}m \log(\tfrac{en}{m}) +\tfrac{M}{8}\Big)  
			\le \exp\big\{-\beta M/8\big\}.
			\label{th1_T1}
		\end{align}
		
		Let us bound $\mathrm{E}_{\theta} T_2$. 
		Since $\tbinom{n}{k} \le (\frac{en}{k})^k$ and  $c_1>2$,  the following relation holds:
		\begin{align}
			\label{th1_fact1}
			\sum_{I\in\mathcal{I}}\big(\tfrac{ne}{|I|}\big)^{-c_1 |I|}=
			\sum_{k=0}^n \tbinom{n}{k} \big(\tfrac{en}{k}\big)^{-c_1k} 
			\le \sum_{k=0}^n\big(\tfrac{en}{k}\big)^{-k(c_1-1)}
			\le (1-e^{1-c_1})^{-1}\triangleq C_0.
		\end{align} 
		If $I \in \mathcal{S}^c_M$, then $r^2(I,\theta)> c_3 r^2(\theta)+\tfrac{\beta}{40(1+B)}M\sigma^2$.
		Using this, Lemma \ref{lemma2}  and \eqref{th1_fact1}, we  bound $\mathrm{E}_{\theta} T_2$:
		\begin{align}
			\mathrm{E}_{\theta} T_2
			&=\sum_{I\in \mathcal{S}^c_M} \mathrm{E}_{\theta}\hat{\pi}(I|X)  
			\le \sum_{I\in\mathcal{S}^c_M}   \big(\tfrac{ne}{|I|}\big)^{-c_1|I|}
			\exp\big\{-c_2\sigma^{-2}
			\big[r^2(I,\theta)-c_3 r^2(\theta)\big]\big\} \notag\\
			\label{th1_T2}
			&\le \sum_{I\in\mathcal{I}} \big(\tfrac{ne}{|I|}\big)^{-c_1|I|}  
			\exp\{-c_2 \beta M/(40(1+B))\}\le C_0\exp\{-c_2\beta M/(40(1+B))\}  .
		\end{align}

		It remains to bound  $\mathrm{E}_{\theta} T_3$.
		For each $I \in \mathcal{S}_M$, $\sigma^2|I| \log(en/|I|) \le r^2(I,\theta)\le c_3 r^2(\theta)
		+\tfrac{\beta}{40(1+B)}M\sigma^2$. Since  $m=\max\{|I|: I \in \mathcal{S}_M\}$, then
		$\sigma^2m\log(\tfrac{en}{m}) \le c_3 r^2(\theta)+\tfrac{\beta}{40(1+B)}M \sigma^2$. Thus, 
		for any $I \in \mathcal{S}_M$, the event $A_M$ implies that
		$\sum_{i\in I} \xi_i^2\le\sum_{i=1}^{m} \xi_{[i]}^2 \le 
		\tfrac{(1+B)}{\beta} m \log(\tfrac{en}{m}) +\frac{M}{8}
		\le \tfrac{(1+B)}{\beta} c_3\sigma^{-2}r^2(\theta)+ \tfrac{3M}{20}$.  
		Denote for brevity $\Delta_M(\theta)=M_0r^2(\theta)+ M\sigma^2$
		and recall that $\sum_{i \in I^c}\theta_{i}^2\le r^2(I,\theta)\le c_3 r^2(\theta)
		+\tfrac{\beta}{40(1+B)}M\sigma^2\le c_3 r^2(\theta)+\tfrac{M}{40}\sigma^2$ 
		for any $I \in \mathcal{S}_M$. Then for any $I \in \mathcal{S}_M$
		\begin{align}
			\label{th7_rel1}
			A_M \subseteq 
			\Big\{\frac{\Delta_M(\theta)}{2}-\sigma^2 \sum_{i\in I} \xi_i^2 - \sum_{i \in I^c}\theta_i^2
			\ge\Big[\frac{M_0}{2}-\frac{1+B+\beta}{\beta}c_3\Big] r^2(\theta)+\frac{13M\sigma^2}{40}\Big\}.
		\end{align}
		According to \eqref{emp_poster_I}, 
		$\hat{\pi}_I(\vartheta|X) =\bigotimes_{i=1}^n\mathrm{N}\big(X_i(I),\sigma_i^2(I) \big)$,
		with $X_i(I)= X_i\mathbbm{1}\{i\in I\}$ and $\sigma_i^2(I) 
		=\tfrac{K_n(I)\sigma^2 \mathbbm{1}\{i\in I\}}{K_n(I)+1}$. 
		Let  $\mathrm{P}_Z$ be the measure of $Z=(Z_1, \ldots, Z_n)$, 
		with $Z_i\overset{\rm ind}{\sim}\mathrm{N}(0,1)$.
		By using \eqref{th7_rel1}, the fact that
		$\tfrac{r^2(\theta)}{\sigma^2} \ge c_3^{-1} (m\log(\tfrac{en}{m}) -\tfrac{\beta}{40(1+B)}M)$ and 
		Lemma \ref{lemma4} (now applied to the Gaussian case),  we obtain
		that, for any $I\in\mathcal{S}_M$,
		\begin{align*}
			\hat{\pi}_I \big (\|\vartheta &- \theta\|^2 
			\ge M_0r^2(\theta)+ M\sigma^2 | X\big)\mathbbm{1}_{A_M}
			= \mathrm{P}_Z\Big(\sum_{i=1}^n(\sigma_i(I) Z_i +X_i(I)-\theta_{i})^2 \ge \Delta_M(\theta)\Big)
			\mathbbm{1}_{A_M} \\
			&\le 
			\mathrm{P}_Z\Big(\sum_{i=1}^n \sigma_i^2(I) Z_i^2 \ge\tfrac{\Delta_M(\theta)}{2}  
			-\sum_{i=1}^n (X_i(I) - \theta_{i})^2\Big)\mathbbm{1}_{A_M}\\
			&\le
			\mathrm{P}_Z\Big(\sum_{i\in I} \sigma^2 Z_i^2 \ge
			\tfrac{\Delta_M(\theta)}{2}
			-\sum_{i\in I} \sigma^2\xi_i^2 - \sum_{i \in I^c}\theta_{i}^2\Big)\mathbbm{1}_{A_M}\\  
			&\le 
			\mathrm{P}_Z\Big(\sum_{i\in I} Z_i^2 \ge\big[\tfrac{M_0}{2}-\big(\tfrac{1+B}{\beta}+1\big)c_3\big]
			\frac{r^2(\theta)}{\sigma^2}+\tfrac{13M}{40}\Big)\\
			\label{th1_rel03}
			&\le 
			\mathrm{P}_Z\Big(\sum_{i=1}^m Z_{[i]}^2 \ge \big(\tfrac{M_0}{2c_3}-\tfrac{1+B}{\beta}-1\big)
			\big[m\log(\tfrac{en}{m})-\tfrac{\beta}{40(1+B)}M\big]+\tfrac{13M}{40}\Big)\\
			&\le 
			\mathrm{P}_Z\Big(\sum_{i=1}^m Z_{[i]}^2 \ge  5m\log(\tfrac{en}{m}) 
			+\tfrac{M}{5}\Big) \le \exp\{-2M/25\}, \notag
		\end{align*}
		where we also used in the last step that 
		$\tfrac{M_0}{2c_3}-\tfrac{1+B}{\beta}-1=5$. Hence,
		\begin{align*}
			\mathrm{E}_{\theta} T_3 &= 
			\mathrm{E}_{\theta} 
			\sum_{I\in\mathcal{S}_M}\mathbbm{1}_{A_M}
			\hat{\pi}_I\big(\|\vartheta-\theta\|^2\ge M_0r^2(\theta)+ M\sigma^2 | X\big)\hat{\pi}(I|X) \\&
			\le  \exp\{-2M/25\}\mathrm{E}_{\theta}\sum_{I\in\mathcal{I}}\hat{\pi}(I|X) \le  \exp\{-2M/25\}.
		\end{align*}
		This completes the proof of assertion \eqref{th1_i} since, in view of \eqref{th1_T1}, \eqref{th1_T2} and 
		the last display,  we established  that
		$\mathrm{E}_{\theta} \hat{\pi}\big(\|\vartheta-\theta\|^2\ge M_0r^2(\theta)+ M\sigma^2  | X\big)
		\le \mathrm{E}_{\theta}(T_1+T_2+T_3) \le  (2+C_0)e^{-m_0 M}$, with constants 
		$M_0=2c_3(6+\tfrac{1+B}{\beta})$, $H_0=2+C_0$,
		$m_0= \min\{\frac{\beta}{8},\frac{c_2\beta}{40(1+B)}, \tfrac{2}{25}\}$
		and $C_0$ defined in \eqref{th1_fact1}. \medskip
		
		The proof of assertion \eqref{th1_ii} proceeds along similar lines.
		Recall the constants $c_1>2$, $c_2$, $c_3$ from Lemma \ref{lemma2} and
		define $M_1=4c_3(1+B+\beta)/\beta$. Introduce the subfamily of sets
		\[
		\bar{\mathcal{S}}_M=\bar{\mathcal{S}}_M(\theta)=
		\big\{ I \in \mathcal{I}: r^2(I,\theta) \le 2c_3 r^2(\theta)+\tfrac{\beta}{6(1+B)} M\sigma^2\big\},
		\]
		and the event 
		$\bar{A}_M=\bar{A}_M(\theta)=
		\big\{\sum_{i=1}^{\bar{m}} \xi_{[i]}^2 \le  \tfrac{(1+B)}{\beta}\bar{m}\log(\tfrac{en}{\bar{m}})
		+\frac{M}{6}\big\}$, where 
		$\bar{m}=\bar{m}_M(\theta)=\max\{|I|: I \in \bar{\mathcal{S}}_M\}$.
		Introduce the notation $\bar{\Delta}_M(\theta)=M_1r^2(\theta)+M\sigma^2$ for brevity.
		By the definition of $\hat{\theta}$ and the Cauchy-Schwartz inequality, 
		we have that $\|\hat{\theta}-\theta\|^2 \le \sum_{I \in \mathcal{I}}\|X(I)-\theta\|^2 \hat{\pi}(I|X)$,
		where $\|X(I)-\theta\|^2=\sigma^2\sum_{i\in I} \xi_i^2 + \sum_{i\in I^c} \theta_i^2$.
		Using this, we derive
		\begin{align*}
			&\mathrm{P}_{\theta}\big(\|\hat{\theta}-\theta\|^2 \ge  \bar{\Delta}_M(\theta) \big) \le
			\mathrm{P}_{\theta}\Big(\sum_{I \in \mathcal{I}}
			\|X(I)-\theta\|^2 
			\hat{\pi}(I|X) \ge \bar{\Delta}_M(\theta) \Big) \\  
			&\le 
			\mathrm{P}_{\theta}(\bar{A}^c_M)  +
			\mathrm{P}_{\theta}\Big(\Big\{\sum_{I \in \bar{\mathcal{S}}_M}
			\Big[\sigma^2\sum_{i\in I} \xi_i^2 + \sum_{i\in I^c} \theta_i^2\Big]
			\hat{\pi}(I|X) \ge\bar{\Delta}_M(\theta)/2\Big\} \cap \bar{A}_M\Big) \\
			&\qquad +
			\mathrm{P}_{\theta}\Big(\sum_{I \in \bar{\mathcal{S}}^c_M}
			\Big[\sigma^2\sum_{i\in I} \xi_i^2 + \sum_{i\in I^c} \theta_i^2\Big]
			\hat{\pi}(I|X) \ge \bar{\Delta}_M(\theta)/2\Big) =\bar{T}_1+\bar{T}_2+\bar{T}_3.
		\end{align*}
		
		Similar to \eqref{th1_T1}, we bound the term $\bar{T}_1$ 
		by Lemma \ref{lemma4} (see also \eqref{lem4_rel}):
		\[
		\bar{T}_1=\mathrm{P}_{\theta}(\bar{A}^c_M) = 
		\mathrm{P}\Big(\sum_{i=1}^{\bar{m}} \xi_{[i]}^2>\tfrac{(1+B)}{\beta}\bar{m}\log(\tfrac{en}{\bar{m}}) +\tfrac{M}{6}\Big) 
		\le \exp\big\{-M\beta/6\big\}.
		\]
		
		Now we evaluate the term $\bar{T}_2$.  
		Since $\bar{m}=\max\{|I|: I \in \bar{\mathcal{S}}_M\}$,  
		$\sigma^2\bar{m}\log(\tfrac{en}{\bar{m}}) \le 2c_3 r^2(\theta)+\tfrac{\beta}{6(1+B)} M\sigma^2$.
		Then for any $I \in \bar{\mathcal{S}}_M$, the event $\bar{A}_M$ implies that
		$\sum_{i\in I} \xi_i^2\le \sum_{i=1}^{\bar{m}} \xi_{[i]}^2 \le \tfrac{(1+B)}{\beta}\bar{m}\log(\tfrac{en}{\bar{m}}) 
		+\frac{M}{6}\le \tfrac{2c_3 (1+B)}{\beta} \tfrac{r^2(\theta)}{\sigma^2} + \tfrac{M}{3}$.  
		Also $\sum_{i \in I^c}\theta_{i}^2 \le r^2(I,\theta)\le 2c_3 r^2(\theta)+\tfrac{\beta}{6(1+B)} M\sigma^2$ 
		for any $I \in \bar{\mathcal{S}}_M$. Hence, for any $I \in \bar{\mathcal{S}}_M$, we obtain the implication
		\[
		\bar{A}_M \subseteq 
		\Big\{ \sigma^2 \sum_{i\in I} \xi_i^2 +\sum_{i \in I^c}\theta_{i}^2
		\le \tfrac{2c_3(1+B+\beta)}{\beta}r^2(\theta)+(\tfrac{1}{3}+\tfrac{\beta}{6(1+B)}) M\sigma^2\Big\}.
		\]
		As $M_1=4c_3(1+B+\beta)/\beta$, $\beta\in (0,1]$ and $B>0$, the last relation entails
		\begin{align*}
			\bar{T}_2 &=\mathrm{P}_{\theta}\Big(\Big\{\sum_{I \in \bar{\mathcal{S}}_M}
			\Big(\sigma^2 \sum_{i\in I} \xi_i^2 +\sum_{i \in I^c}\theta_{i}^2 \Big)\hat{\pi}(I|X) 
			\ge \tfrac{\bar{\Delta}_M}{2}\Big\} \cap \bar{A}_M\Big) \\
			&\le 
			\mathrm{P}_{\theta}\Big(\tfrac{2c_3(1+B+\beta)}{\beta}r^2(\theta)
			+(\tfrac{1}{3}+\tfrac{\beta}{6(1+B)})M\sigma^2 \ge \tfrac{M_1}{2} r^2(\theta)+ \tfrac{M}{2}\sigma^2\Big)=0.
		\end{align*}
		
		It remains to handle the term $\bar{T}_3$. 
		Applying first  the Markov inequality and then the Cauchy-Schwarz inequality, we obtain 
		\begin{align*}
			&\bar{T}_3 \le \frac{ \mathrm{E}_{\theta}  
				\big(\sum_{I \in \bar{\mathcal{S}}^c_M}
				\big[\sigma^2\sum_{i\in I} \xi_i^2 + \sum_{i\in I^c} \theta_{i}^2\big]
				\hat{\pi}(I|X) \big)}{\bar{\Delta}_M(\theta)/2}\\
			&\le 
			\frac{\sum_{I\in \bar{\mathcal{S}}^c_M} \big(\sigma^2
				\big[\mathrm{E}_{\theta}\big(\sum_{i\in I} \xi_i^2 \big)^2\big]^{1/2} 
				\big[ \mathrm{E}_{\theta}(\hat{\pi}(I|X))^2\big]^{1/2} 
				+r^2(I,\theta) \mathrm{E}_{\theta}\hat{\pi}(I|X)\big)}{\bar{\Delta}_M(\theta)/2} =T_{31} +T_{32}.
		\end{align*}
		
		For any $I \in \bar{\mathcal{S}}^c_M$, we have $c_3 r^2(\theta) 
		\le \tfrac{r^2(I,\theta)}{2} -\tfrac{\beta}{12(1+B)} M\sigma^2$, yielding the bound 
		\begin{align}
			\label{th1_rel11}
			\frac{c_2}{2}\big(r^2(I,\theta)-c_3 r^2(\theta)\big) &\ge
			C_1r^2(I,\theta)+C_2 M\sigma^2
			\quad \text{for any} \;\; I \in \bar{\mathcal{S}}^c_M,
		\end{align}
		where  $C_1= c_2/4$ and $C_2=c_2\beta/[24(1+B)]$. 
		By \eqref{th1_rel11} and Lemma \ref{lemma2},
		\begin{align}
			\label{th1_rel12}
			\big[\mathrm{E}_{\theta}\hat{\pi}(I|X) \big]^{1/2}\le  
			\big(\tfrac{ne}{|I|}\big)^{-c_1|I|/2} 
			\exp\big\{-C_1\sigma^{-2}r^2(I,\theta)-C_2M\big\}
			\quad \text{for any} \;\; I \in \bar{\mathcal{S}}^c_M.
		\end{align}
		Since $c_1>2$, \eqref{th1_fact1} gives  
		$\sum_{I\in \mathcal{I}}\big(\tfrac{ne}{|I|}\big)^{-c_1|I|/2} \le 
		(1-e^{-c_1/2})^{-1}\triangleq C_3$.
		According to \eqref{moment_bound} with $\rho=\min\{C_1,B/2\}$,
		$\Big[\mathrm{E}\big(\sum_{i\in I} \xi_i^2 \big)^2\Big]^{1/2}
		\le \frac{B}{\beta\rho}\exp\{\rho |I|\}$.  
		If $M\in[0,1]$, the claim (ii) holds for any $H_1\ge e^{m_1}$.
		Let  $M\ge1$, then $\sigma^2/\bar{\Delta}_M(\theta) \le M^{-1}\le 1$. 
		Besides, $\sigma^{-2}r^2(I,\theta) \ge |I|\log(en/|I|) \ge |I|$. 
		Piecing all these relations together with \eqref{th1_rel12}, we derive
		\begin{align*}
			T_{31} &\le \tfrac{2B}{\beta\rho}
			\sum_{I\in \bar{\mathcal{S}}^c_M}\exp\{\rho |I|\}
			\big(\tfrac{ne}{|I|}\big)^{-c_1|I|/2} 
			\exp\big\{-C_1\sigma^{-2}r^2(I,\theta)-C_2M\big\} 
			\le C_4\exp\{-C_2M\},
		\end{align*}
		where $C_4=2BC_3/(\beta\rho) =2BC_3/(\beta\min\{C_1,B\})$.
		Finally, by \eqref{th1_fact1}, \eqref{th1_rel12} and the facts that 
		$\max_{x\ge 0}\{xe^{-cx} \}\le (ce)^{-1}$ (for any $c>0$) and 
		$\sigma^2/\bar{\Delta}_M(\theta) \le 1$, we bound the term $T_{32}$:
		\begin{align*}
			T_{32}&=\tfrac{2}{\bar{\Delta}_M(\theta)} \sum_{I\in \bar{\mathcal{S}}^c_M} 
			r^2(I,\theta)   \mathrm{E}_{\theta}\hat{\pi}(I|X) \\
			&\le 
			\tfrac{2}{\bar{\Delta}_M(\theta)} \sum_{I\in \bar{\mathcal{S}}^c_M} 
			r^2(I,\theta)  
			\big(\tfrac{ne}{|I|}\big)^{-c_1|I|}\exp\big\{-2C_1\sigma^{-2}r^2(I,\theta)-2C_2M\big\} 
			\le C_5 \exp\{-2C_2 M\},
		\end{align*}
		where $C_5=C_0/(C_1 e)$.
		The assertion \eqref{th1_ii} is proved since we showed that 
		$\mathrm{P}_{\theta} \big(\|\hat{\theta}-\theta\|^2\ge M_1r^2(\theta)+ M\sigma^2\big)
		\le H_1 e^{-m_1 M}$ with $M_1=4c_3(1+B+\beta)/\beta$, 
		$H_1=(1+C_4+C_5)\vee e^{m_1}$,  $m_1= \min\{\frac{\beta}{6},C_2\}$.
	\end{proof}
	
	\begin{proof}[Proof of Theorem \ref{th2}]
		First we prove (i). If the  inequality $|I \backslash I_o|\log(\tfrac{en}{|I|})<\sum_{i\in I \backslash I_o}\tfrac{\theta_i^2}{\sigma^2}$ would hold for some $I\in\mathcal{I}$, then 
		\begin{align*}
			r^2(I\cup I_o,\theta) 
			&= \sum_{i \not\in I\cup I_o} \theta^2_i+\sigma^2 |I\cup I_o| 
			\log(\tfrac{en}{|I\cup I_o|}) \le \sum_{i \not\in I\cup I_o} \theta^2_i 
			+\sigma^2 |I \backslash I_o|\log(\tfrac{en}{|I|})+\sigma^2 |I_o|\log(\tfrac{en}{|I_o|}) \\
			&< 
			\sum_{i \not\in I\cup I_o} \theta^2_i+\sum_{i\in I \backslash I_o}\tfrac{\theta_i^2}{\sigma^2}
			+ \sigma^2 |I_o| \log(\tfrac{en}{|I_o|})
			= \sum_{i \not\in I_o} \theta^2_i+\sigma^2 |I_o| 
			\log(\tfrac{en}{|I_o|})=r^2(\theta),
		\end{align*}
		which contradicts the definition of the oracle. Hence, 
		$\sum_{i\in I \backslash I_o}\tfrac{\theta_i^2}{\sigma^2}\le
		|I \backslash I_o| \log(\tfrac{en}{|I|})$ for any $I \in \mathcal{I}$.
		Define $c_4=\varkappa\beta-\tfrac{\beta}{2}-B-1$ and 
		note that $c_4>1$ by the condition of the theorem.
		Using the relation $\sum_{i\in I \backslash I_o}\tfrac{\theta_i^2}{\sigma^2}
		\le|I \backslash I_o| \log(\tfrac{en}{|I|})\le 
		|I| \log(\tfrac{en}{|I|})$ and Lemma \ref{lemma1} with $h=\beta$ and
		$I_0=I_o\cap I$ (so that $I\backslash I_0=I\backslash I_o$), 
		we obtain for each $I\in\mathcal{G}_1=\{I\in\mathcal{I}:|I| \log(\frac{en}{|I|}) \ge 
		M'_0 |I_0|\log(\frac{en}{|I_0|})+M\}$ with $M'_0=\varkappa\beta+\tfrac{\beta}{2}$,
		\begin{align*}
			\mathrm{E}_\theta\hat{\pi}(I|X) 
			&\le\big[\tfrac{\lambda_{I}}{\lambda_{I_0}}\big]^\beta
			\exp\Big\{\beta \sum_{i\in I\backslash I_0}
			\tfrac{\theta_i^2}{\sigma^2} 
			+\tfrac{\beta}{2}|I_0| \log(\tfrac{en}{|I_0|})-(\tfrac{\beta}{2}-B)|I| \log(\tfrac{en}{|I|})\Big\}\\
			&\le
			(\tfrac{ne}{|I|})^{-c_4 |I|} 
			\exp\big\{-(\varkappa\beta-\tfrac{\beta}{2} -B -c_4)|I|\log(\tfrac{en}{|I|})
			+(\beta \varkappa +\tfrac{\beta}{2}) |I_0|\log(\tfrac{en}{|I_0|})\big\}\\
			&=
			(\tfrac{ne}{|I|})^{-c_4|I|} \exp\big\{-|I| \log(\tfrac{en}{|I|}) 
			+(\beta \varkappa +\tfrac{\beta}{2})|I_0| \log(\tfrac{en}{|I_0|})\big\}\\
			&\le 
			(\tfrac{ne}{|I|})^{-c_4|I|}\exp\big\{-(M'_0-\varkappa\beta-\tfrac{\beta}{2})
			|I_0| \log(\tfrac{en}{|I_0|})-M\big\}
			=(\tfrac{ne}{|I|})^{-c_4|I|} 
			e^{-M}.
		\end{align*}
		Since $c_4>1$, by the same reasoning as in \eqref{sum_lambda}
		we bound $\sum_{I\in\mathcal{I}} (\tfrac{ne}{|I|})^{-c_4|I|}\le(1-e^{1-c_4})^{-1}\triangleq H'_0$.
		Using this and the last display, we finish the proof of (i):
		\begin{align*}
			\mathrm{E}_\theta\hat{\pi}(I \in\mathcal{G}_1| X) &=
			\sum_{I\in\mathcal{G}_1}\mathrm{E}_\theta\hat{\pi}(I|X)\le 
			e^{-M}\sum_{I\in\mathcal{I}} (\tfrac{ne}{|I|})^{-c_4|I|}
			\le H'_0 e^{-M}.
		\end{align*}

		Next we prove (ii). 
		Define $\mathcal{G}_2=\mathcal{G}_2(I')
		=\{I\in\mathcal{I}: \sum_{i\in I'\backslash I} \frac{\theta^2_i}{\sigma^2} 
		\ge  \bar{\tau} |I\cup I'| \log(\frac{en}{|I\cup I'|})+M\}$.
		Using 
		\eqref{prior_lambda} and Lemma \ref{lemma1} with $h=\beta$ 
		and $I_0=I_0(I, \theta)=I\cup I'$,  
		we evaluate  for each $I\in \mathcal{G}_2$
		\begin{align*}
			\mathrm{E}_{\theta}
			&\hat{\pi}(I|X)\le
			\big[\tfrac{\lambda_I}{\lambda_{I_0}}\big]^\beta
			\exp\Big\{-\tfrac{\beta}{3}\sum_{i\in I_0\backslash I}\tfrac{\theta_i^2}{\sigma^2}
			+(\tfrac{\beta}{2}+B)|I_0|\log\big(\tfrac{en}{|I_0|}\big)
			-\tfrac{\beta}{2}|I|\log\big(\tfrac{en}{|I|}\big)\Big\}\notag\\
			&=
			\big[\tfrac{\lambda_I}{c_{\varkappa,n}}\big]^\beta
			\exp\Big\{-\tfrac{\beta}{3}\sum_{i\in I'\backslash I}\tfrac{\theta_i^2}{\sigma^2}
			+(\varkappa \beta +\tfrac{\beta}{2}+B) |I\cup I'|\log \big(\tfrac{en}{|I\cup I'|}\big)
			-\tfrac{\beta}{2}|I|\log\big(\tfrac{en}{|I|}\big)\Big\}\notag\\
			&\le
			\big[\tfrac{\lambda_I}{c_{\varkappa,n}}\big]^{\beta+\tfrac{\beta}{2\varkappa}} 
			\exp\big\{\big(-\tfrac{\beta}{3}\bar{\tau}+\varkappa\beta+\tfrac{\beta}{2}+B\big)
			|I\cup I'|\log \big(\tfrac{en}{|I\cup I'|}\big)
			-\tfrac{\beta}{3}M\big\}
			\le \big[\tfrac{\lambda_I}{c_{\varkappa,n}}\big]^{\beta+\tfrac{\beta}{2\varkappa}} 
			e^{-\tfrac{\beta}{3}M}.
		\end{align*}
		Since $\varkappa>\beta^{-1}-\tfrac{1}{2}$, by the same reasoning as in \eqref{sum_lambda} 
		we bound
		$\sum_{I} \big[\tfrac{\lambda_I}{c_{\varkappa,n}}\big]^{\beta(1+1/2\varkappa)}
		\le (1-e^{1-\varkappa\beta-\beta/2})^{-1}\triangleq H'_1$.
		This relation and the last display imply claim (ii):
		with $m'_0= \tfrac{\beta}{3}$,
		\begin{align}
			\label{th2_iia}
			\mathrm{E}_{\theta} \hat{\pi}\big(I\in \mathcal{G}_2|X\big)
			=\sum_{I\in \mathcal{G}_2} \mathrm{E}_{\theta}\hat{\pi}(I|X)
			\le H'_1\exp\{
			-m'_0M\}.
		\end{align}
		Let us derive the second claim of (ii). If $|I| \log(\tfrac{en}{|I|})\le 
		\varrho |I_*| \log(\tfrac{en}{|I_*|}) - M$, then 
		$|I\cup I_*|\log(\tfrac{en}{|I \cup I_*|})\le |I|\log(\tfrac{en}{|I|})
		+|I_*|\log(\tfrac{en}{|I_*|})\le (1+\varrho) |I_*|\log(\tfrac{en}{|I_*|}) -M$.
		Hence, 
		$|I_*|\log(\tfrac{en}{|I_*|}) \ge \tfrac{1}{1+\varrho} |I\cup I_*|
		\log(\tfrac{en}{|I \cup I_*|})+\tfrac{M}{1+\varrho}$, which, together with 
		the definition of the $\tau$-oracle, imply
		\begin{align}
			\sum_{i\in I_*\backslash I} \tfrac{\theta^2_i}{\sigma^2} 
			&\ge\Big(\sum_{i\in I^c} \tfrac{\theta^2_i}{\sigma^2} 
			-\sum_{i\in I_*^c} \tfrac{\theta^2_i}{\sigma^2} \Big)
			\ge\tau_0\big( |I_*|\log(\tfrac{en}{|I_*|})-|I|\log(\tfrac{en}{|I|})\big) \notag\\
			&\ge\tau_0(1-\varrho)|I_*|\log( \tfrac{en}{|I_*|}) + \tau M
			\ge \tau' |I\cup I_*|\log(\tfrac{en}{|I \cup I_*|})
			+\tfrac{2\tau_0}{1+\varrho} M,
			\label{proof_th2_ii}
		\end{align}
		where $\tau'=\tfrac{1-\varrho}{1+\varrho} \tau_0> \bar{\tau}$
		by the condition of the theorem. Thus, we obtain 
		\[
		\mathrm{E}_{\theta}\hat{\pi}\big(I: |I| \log(\tfrac{en}{|I|})\le 
		\varrho |I_*| \log(\tfrac{en}{|I_*|}) - M \big|X\big)  \le 
		\mathrm{E}_{\theta}\hat{\pi}\Big(\sum_{i\in I_*\backslash I} \tfrac{\theta^2_i}{\sigma^2} 
		\ge \tau'|I\cup I_*| \log(\tfrac{en}{|I\cup I_*|})+\tfrac{2\tau_0}{1+\varrho}M |X\Big).
		\]
		By this and \eqref{th2_iia} with $I'=I_*$, the second claim of (ii) follows  with
		$\alpha'_1 = \tau'-\bar{\tau}>0$ and $m'_1=\tfrac{2\tau_0 m'_0}{1+\varrho}$.

		Finally, let us prove (iii).
		Denote $\mathcal{G}_3=\mathcal{G}_3(\theta,M)=\{I: \, r^2(I,\theta) \ge c_3 r^2(\theta) + M \sigma^2\}$, 
		where the constants $c_1>2$, $c_2$, $c_3$ 
		are defined in Lemma \ref{lemma2}. Applying Lemma \ref{lemma2} 
		and using the fact \eqref{th1_fact1}, 
		we complete the proof of (iii):
		\begin{align*}
			&\mathrm{E}_{\theta}\hat{\pi}\big(I \in\mathcal{G}_3\big| X\big)=
			\sum_{I\in\mathcal{G}_3}\mathrm{E}_{\theta}\hat{\pi}(I|X)  \le
			e^{-c_2M} \sum_{I\in\mathcal{I}} (\tfrac{ne}{|I|})^{-c_1 |I|}  \le 
			C_0 e^{-c_2 M}. \qedhere
		\end{align*}
	\end{proof}

	\begin{proof}[Proof of Theorem \ref{th8}] 
		The biggest part of the proof is already contained in Theorem \ref{th2}. 
		We first establish the coverage property. 
		The constants $M_1$, $H_1$ and $m_1$  are defined in Theorem \ref{th1},  
		the constant $\varrho$ is from \eqref{def_I_*}.
		Take some $M_2>\tfrac{M_1}{\varrho}$, for example $M_2=\tfrac{M_1}{\varrho}+1$. 
		From  \eqref{oracle} and \eqref{def_b}, it follows that 
		$r^2(\theta)\le r^2(I_*,\theta)=(b(\theta)+1)\sigma^2 |I_*
		|\log(\tfrac{en}{|I_*|})+b(\theta)\sigma^2\le(b(\theta)+1)
		\sigma^2(|I_*|\log(\tfrac{en}{|I_*|})+1)$.
		Combining this with claims (ii) from Theorems \ref{th1} and \ref{th2} 
		and the definition \eqref{check_r} of $\hat{r}$ yields the coverage property:
		\begin{align*}
			&\mathrm{P}_{\theta}\big(\theta  \notin B(\hat{\theta},[(b(\theta)+1)M_2\hat{r}^2
			+(b(\theta)+2)M\sigma^2]^{1/2}\big) \\
			&\le\mathrm{P}_{\theta}\Big(\|\hat{\theta}-\theta\|^2>(b(\theta)+1)M_2\hat{r}^2
			+(b(\theta)+2)M\sigma^2, \hat{r}^2\ge \varrho\sigma^2|I_*|\log(\tfrac{en}{|I_*|})
			+\sigma^2-\tfrac{M\sigma^2}{M_2}\Big) \\
			&\qquad +\mathrm{P}_{\theta}\Big(\hat{r}^2<\varrho \sigma^2|I_*|\log(\tfrac{en}{|I_*|})
			+\sigma^2-\tfrac{M\sigma^2}{M_2}\Big)\notag\\
			&\le \mathrm{P}_{\theta}\Big(\|\hat{\theta}-\theta\|^2>  \varrho M_2 r^2(\theta)+M\sigma^2\Big)
			+ \mathrm{P}_{\theta}\Big(|\hat{I}|\log(\tfrac{en}{|\hat{I}|})< 
			\varrho|I_*|\log(\tfrac{en}{|I_*|})-\tfrac{M}{M_2}\Big)\\ 
			&\le H_1\big(\tfrac{en}{ |I_o|}\big)^{-\alpha_1|I_o|} e^{-m_1M}
			+H'_1 \big(\tfrac{en}{ |I_*|}\big)^{-\alpha'_1|I_*|}e^{-m''_1M}
			\le  H_2 e^{-m_2M},
		\end{align*}
		where $\alpha_1= \varrho M_2 -M_1$, $m''_1= m'_1/M_2$, $H_2=H_1+H'_1$, 
		$m_2=m_1\wedge m''_1$; $H'_1, \alpha'_1, m'_1$ are defined in Theorem \ref{th2} 
		and the constant $\varrho$ is from \eqref{def_I_*}.
		The first claim of the theorem follows.
		
		The size property follows from the definition \eqref{check_r} of 
		$\hat{r}$,  Remark \ref{rem1a} and property (i) of Theorem  \ref{th2}.
		Indeed, $\mathrm{P}_{\theta}\big(\hat{r}^2\ge 
		\sigma^2 (M'_0+\alpha) |I_o | \log(\tfrac{en}{|I_o|})+(M+1)\sigma^2\big)
		=\mathrm{P}_{\theta}\big(|\hat{I}| \log(\tfrac{en}{|\hat{I}|})\ge  
		(M'_0+\alpha) |I_o | \log(\tfrac{en}{|I_o|})+M\big)\le 
		\mathrm{P}_{\theta}\big(|\hat{I}| \log(\tfrac{en}{|\hat{I}|})\ge  
		M'_0 |I\cap I_o | \log(\tfrac{en}{|I\cap I_o|})+ \alpha| I_o | \log(\tfrac{en}{|I_o|})+M\big)
		\le H'_0 (\tfrac{ne}{|I_o|})^{-\alpha |I_o|} e^{-M}$.
	\end{proof}
	
	\begin{proof}[Proof of Theorem \ref{th4}] 
		Since $X'=\theta+\xi'$, we rewrite \eqref{radius2} as 
		\begin{align}
			\tilde{R}^2_M&=\big(\|X'-\hat{\theta}\|^2 -\sigma^2\mathrm{E} \|\xi'\|^2  
			+2\sigma^2G_M\sqrt{n}\big)_+ \notag\\
			&=\big(\|\theta-\hat{\theta}\|^2+\sigma^2\big(\|\xi'\|^2-\mathrm{E} \|\xi'\|^2\big)+2\sigma 
			\langle\xi',(\theta-\hat{\theta})\rangle+2\sigma^2G_M\sqrt{n}\big)_+.
			\label{R_M}
		\end{align}
		
		Introduce the events $\mathcal{C}_M=\big\{2|\langle\xi',(\theta-\hat{\theta})\rangle|<
		\sqrt{M(M_1r^2(\theta) +M\sigma^2)} \big\}$ and $\mathcal{D}_M=
		\big\{\|\hat{\theta}-\theta\|^2<M_1r^2(\theta) +M\sigma^2\big\}$. 
		According to \eqref{cond_A3}, $\hat{\theta}$ and $\hat{I}$ are independent of $\xi'$.
		Using this fact, the first relation from \eqref{cond_A3} and Theorem \ref{th1}, 
		we obtain that
		\begin{align}
			\label{expression}
			\mathrm{P}_\theta(\mathcal{C}_M^c)&=
			\mathrm{E}_\theta\mathrm{P}_\theta(\mathcal{C}_M^c\cap\mathcal{D}_M|X)
			+\mathrm{P}_\theta(\mathcal{C}_M^c\cap\mathcal{D}^c_M)\notag\\
			&\le \mathrm{E}_\theta \Big[\psi_1\big(\tfrac{M(M_1r^2(\theta)+M\sigma^2)}{4\|\hat{\theta}-\theta\|^2}\big)
			\mathbbm{1}_{\mathcal{D}_M}\Big]+\mathrm{P}_\theta(\mathcal{D}_M^c)\le 
			\psi_1(M/4)+H_1e^{-m_1M}.
		\end{align}
		Since $r^2(\theta)\le r^2([n], \theta)=\sigma^2n$ 
		by the oracle definition \eqref{oracle}, the event $\mathcal{C}_M$ implies 
		that $2\sigma\langle\xi',(\theta-\hat{\theta})\rangle>
		-\sigma\sqrt{M(M_1 \sigma^2n +M\sigma^2)} \ge -\sigma^2 G_M \sqrt{n}$.
		Combining this with \eqref{R_M}, \eqref{expression} and the second relation from \eqref{cond_A3}
		yields the coverage relation:
		\begin{align*}
			&\mathrm{P}_\theta \big(\theta  \notin B(\hat{\theta},\tilde{R}_M)\big)
			=\mathrm{P}_\theta\big(\theta  \notin B(\hat{\theta},\tilde{R}_M),\mathcal{C}_M\big)
			+\mathrm{P}_\theta\big(\theta\notin B(\hat{\theta},\tilde{R}_M),\mathcal{C}_M^c\big)\\
			&\le \mathrm{P}_\theta \big(\|\theta-\hat{\theta}\|^2\ge \tilde{R}^2_M,\mathcal{C}_M\big)
			+\mathrm{P}_\theta(\mathcal{C}_M^c)  
			\le\mathrm{P}_\theta\big(0\ge\sigma^2(\|\xi'\|^2-\mathrm{E} \|\xi'\|^2)+\sigma^2G_M \sqrt{n}\big)+\mathrm{P}_\theta(\mathcal{C}_M^c)\\
			&\le\mathrm{P}_\theta\big(\|\xi'\|^2-\mathrm{E} \|\xi'\|^2\le-M\sqrt{n} \big)
			+\psi_1(M/4)+H_1e^{-m_1M}  
			\le \psi_2(M) +\psi_1(M/4)+H_1e^{-m_1M}.
		\end{align*}
		
		Let us show the size property. By \eqref{expression}, 
		$\mathrm{P}_\theta\big(2\sigma\langle\xi',(\theta-\hat{\theta})\rangle\ge 
		\sigma^2G_M\sqrt{n}\big) \le
		\mathrm{P}_\theta\big( 2\langle\xi',(\theta-\hat{\theta})\rangle>
		\sqrt{M(M_1r^2(\theta) +M\sigma^2)}\le \mathrm{P}_\theta(\mathcal{C}^c_M)
		\le\psi_1(M/4)+H_1e^{-m_1M}$. This, Theorem \ref{th1} and \eqref{R_M} imply
		\begin{align*}
			&\mathrm{P}_\theta\big(\tilde{R}_M^2 \ge g_M(\theta,n)\big)\le 
			\mathrm{P}_\theta \big( \|\theta-\hat{\theta}\|^2 \ge M_1  r^2(\theta)+M\sigma^2\big) +
			\mathrm{P}_\theta\big(\sigma^2\big(\|\xi'\|^2-\mathrm{E} \|\xi'\|^2\big) 
			\ge \sigma^2G_M\sqrt{n} \big) \\
			& \qquad
			+\mathrm{P}_\theta\big(2\sigma\langle\xi',(\theta-\hat{\theta})\rangle\ge 
			\sigma^2G_M\sqrt{n}\big)\le H_1e^{-m_1M} +\psi_2(M)+\psi_1(M/4)+H_1e^{-m_1M}.
			\qedhere
		\end{align*}
	\end{proof}

	\begin{proof}[Proof of Theorem \ref{cor1_dimension}]
		Observe that  $r^2(\theta) \le  r^2(I^*(\theta),\theta) 
		\le \sigma^2 s(\theta) \log (\frac{en}{s(\theta)})$. Since the function 
		$x\mapsto x\log (en/x)$ is increasing over $(0,n]$, $|I|\ge M s(\theta)$ 
		implies that $r^2(I,\theta)\ge  \sigma^2 |I| \log(\frac{en}{|I|}) \ge \sigma^2 M s(\theta) \log(\frac{en}{Ms(\theta)})$.
		Thus, if $|I|\ge M s(\theta)$, then
		\begin{align*} 
			r^2(I,\theta) \ge M\sigma^2 s(\theta) \log(\tfrac{en}{Ms(\theta)})\ge 
			M_4r^2(\theta)-M_4\sigma^2 s(\theta) \log(\tfrac{en}{s(\theta)})
			+ M\sigma^2 s(\theta) \log(\tfrac{en}{Ms(\theta)}).
		\end{align*}
		The first claim follows from Theorem \ref{th2} with $M_4=c_3$ and $m_4=c_2$.
		
		To prove the second claim, note that for any $M'>2M_4$,  $|I|\ge M' s(\theta)$ implies that 
		\begin{align*}
			r^2(I,\theta)&\ge \sigma^2|I| \log (en/|I|)\ge 
			M' \sigma^2s(\theta)[\log (en/s(\theta))-\log M'] 
			\ge 
			\frac{M'}{2} \sigma^2s(\theta)\log (en/s(\theta)),
		\end{align*} 
		provided that $s(\theta)<en/(M')^2$. Since $r^2(\theta) \le r^2(I^*(\theta), \theta) \le 
		\sigma^2 s(\theta) \log(en/s(\theta))$, the relation above implies that $r^2(I,\theta)
		\ge M_4 r^2(\theta)+M\sigma^2$, where $M=(M'/2-M_4)s(\theta)\log(en/s(\theta))$. 
		Hence by Theorem \ref{th2}, the assertion holds for $M'_4=M'$  whenever $s(\theta)<en/(M')^2$. 
		If $s(\theta)\ge en/(M')^2$, the result trivially holds by choosing $M'_4=(M')^2/e$. 
		Hence the choice $M'_4\ge\max\{M', (M')^2/e\}$ ensures the result with 
		$m'_4=m_4(M'/2-M_4)$ for any $\theta\in\mathbb{R}^n$.  
	\end{proof}

	\begin{proof}[Proof of Theorem \ref{cor2_dimension}]
		Recall \eqref{weak_balls_minimax_risk}: 
		$\sigma^{-2}r^2(\theta) \le  
		Kn (\tfrac{p_n}{n\sigma})^s\big[\log (\tfrac{n\sigma}{p_n})\big]^{1-s/2}
		$ for each 
		$\theta\in m_s[p_n]$ with some $K=K(s)$. On the other hand, if  $|I|\!>\!Mp_n^*=
		Men(\tfrac{p_n}{n\sigma})^s \big[\log(\tfrac{n\sigma}{p_n})\big]^{-s/2}$, then 
		$\sigma^{-2}r^2(I,\theta)\ge|I|\log(\tfrac{en}{|I|})\ge  M p_n^*\log(\tfrac{en}{Mp^*_n})
		=M p_n^*\big[s \log (\tfrac{n\sigma}{p_n})+\frac{s}{2} \log \log (\tfrac{n\sigma}{p_n})
		-\log(M)\big] \ge Msp_n^*\log (\tfrac{n\sigma}{p_n})$ for sufficiently large $n$ as $p_n =o(n)$.
		Then, for any $\theta\in m_s[p_n]$, $M> c_3 K/(se)$ and $|I|>
		Mp_n^*$, we have that, for sufficiently large $n$,  
		\begin{align*}
			\sigma^{-2}\big(r^2(I,\theta)-c_3 r^2(\theta)\big)&\ge M p_n^*\log(\tfrac{en}{Mp^*_n}) 
			-c_3 K
			n (\tfrac{p_n}{n\sigma})^s\big[\log (\tfrac{n\sigma}{p_n})\big]^{1-s/2}
			\\&\ge Ms p_n^*\log(\tfrac{n\sigma}{p_n})-c_3Ke^{-1}
			p_n^*\log (\tfrac{n\sigma}{p_n})
			=(Ms-c_3Ke^{-1})  p_n^*\log(\tfrac{n\sigma}{p_n}).
		\end{align*}
		
		Finally, applying Theorem \ref{th2}, we obtain
		\begin{align*}
			\sup_{\theta\in m_s[p_n]} \mathrm{E}_{\theta}\hat{\pi}(I: |I|>Mp_n^*|X) 
			&\le C_0 \exp\big\{-c_2s \big(M-c_3K(se)^{-1}\big)p_n^*\log (\tfrac{n\sigma}{p_n})\big\}, 
		\end{align*}
		which gives the claim with  $m_5=c_2s$ and $M_5=c_3K(se)^{-1}$.
	\end{proof}

\end{document}